\theoremstyle{plain}
\newtheorem{thm}{\protect\theoremname}
\theoremstyle{definition}
\newtheorem{defn}[thm]{\protect\definitionname}
\theoremstyle{remark}
\newtheorem{rem}[thm]{\protect\remarkname}
\theoremstyle{plain}
\newtheorem{prop}[thm]{\protect\propositionname}
\theoremstyle{plain}
\newtheorem{lem}[thm]{\protect\lemmaname}
\theoremstyle{plain}
\newtheorem{cor}[thm]{\protect\corollaryname}
\date{}
\theoremstyle{definition}
\newtheorem{notation}[thm]{Notation}
\providecommand{\corollaryname}{Corollary}
\providecommand{\definitionname}{Definition}
\providecommand{\lemmaname}{Lemma}
\providecommand{\propositionname}{Proposition}
\providecommand{\remarkname}{Remark}
\providecommand{\theoremname}{Theorem}
\begin{document}
\global\long\def\IN{\mathbb{N}}%
\global\long\def\II{\mathbbm{1}}%
\global\long\def\IZ{\mathbb{Z}}%
\global\long\def\IQ{\mathbb{Q}}%
\global\long\def\IR{\mathbb{R}}%
\global\long\def\IC{\mathbb{C}}%
\global\long\def\IP{\mathbb{P}}%
\global\long\def\IE{\mathbb{E}}%
\global\long\def\IV{\mathbb{V}}%

\title{Reconstruction of the Probability Measure and the Coupling Parameters
in a Curie-Weiss Model}
\author{Miguel Ballesteros\thanks{IIMAS-UNAM, Mexico City, Mexico}, Ramsés
H. Mena\footnotemark[1], Arno Siri-Jégousse\footnotemark[1], and
Gabor Toth\footnotemark[1]}
\maketitle
\begin{abstract}
\noindent The Curie-Weiss model is used to study phase transitions
in statistical mechanics and has been the object of rigorous analysis
in mathematical physics. We analyse the problem of reconstructing
the probability measure of a multi-group Curie-Weiss model from a
sample of data by employing the maximum likelihood estimator for the
coupling parameters of the model, under the assumption that there
is interaction within each group but not across group boundaries.
The estimator has a number of positive properties, such as consistency,
asymptotic normality, and exponentially decaying probabilities of
large deviations of the estimator with respect to the true parameter
value. A shortcoming in practice is the necessity to calculate the
partition function of the Curie-Weiss model, which scales exponentially
with respect to the population size. There are a number of applications
of the estimator in political science, sociology, and automated voting,
centred on the idea of identifying the degree of social cohesion in
a population. In these applications, the coupling parameter is a natural
way to quantify social cohesion. We treat the estimation of the optimal
weights in a two-tier voting system, which requires the estimation
of the coupling parameter.
\end{abstract}
\textbf{MSC 2020}: 62F10, 82B20, 60F05, 91B12

\textbf{Keywords}: Curie-Weiss model, Mathematical physics, Statistical
mechanics, Gibbs measures, Mathematical analysis, Maximum likelihood
estimator, Consistency, Asymptotic normality, Large deviations, Two-tier
voting systems, Optimal weights

\section{Introduction}

Models of ferromagnetism have long served as foundational tools in
statistical mechanics, enabling the exploration of collective behaviour
in systems with many interacting components. As with many other physics
models, they have also attracted the attention of mathematical physicists
who have contributed rigorous results which frequently confirm the
physicists' intuition. Among the most celebrated is the Ising model,
introduced in the early 20th century by Wilhelm Lenz and his student
Ernst Ising \cite{Ising1925}. In its classical form, the Ising model
consists of a lattice of binary spins or magnets, where each spin
interacts with its nearest neighbours and aligns in response to an
external field and thermal fluctuations. Although the one-dimensional
version exhibits no phase transition at finite temperature, higher-dimensional
cases, such as the two-dimensional model solved by Onsager \cite{Onsager1944},
exhibit rich phase behaviour and critical phenomena, making the Ising
model a central object of study in both physics and applied mathematics.

The Curie-Weiss model, introduced as a mean-field approximation of
the behaviour of ferromagnets \cite{Weiss1907}, simplifies the spatial
complexity by assuming that each spin interacts equally with all others.
This global interaction structure permits a rigorous mathematical
analysis of phase transitions and critical behaviour (see \cite{Ell1985}
for an in-depth treatment of the model). Beyond its physical origins,
the Curie-Weiss model has been widely used in other domains, including
economics \cite{BD2001}, political science \cite{Ki2007}, and sociology
\cite{CGh2007}. An extension of the Curie-Weiss model is the specification
that instead of having a homogeneous population of interacting agents,
there are several identifiable groups which have distinct cultures
or attitudes, manifesting in different voting decisions. The first
version of a multi-group Curie-Weiss model was introduced in \cite{CGh2007};
subsequently, similar models were studied in \cite{BRS2019,LoweSchu2020,LSV2020,KirsToth2020,KirsToth2020b,KirsToth2022b},
including the statistical problem of community detection \cite{BRS2019,LoweSchu2020,BaMePeTo2023}.

One particularly compelling application of the Curie-Weiss model lies
in the domain of collective decision-making, such as voting. In this
context, each spin is interpreted as an individual voter casting a
binary `yes' or `no' vote. The mutual influence between voters,
akin to the ferromagnetic alignment in physical systems, can model
peer pressure, shared information, or ideological affinity. This analogy
becomes especially fruitful in the analysis of two-tier voting systems,
where decisions are made in two stages: individuals vote to determine
the outcome of a local group (e.g., a state or district), and these
local outcomes are then aggregated at a higher level (e.g., in a council
or federal assembly). The central question in this setting concerns
the \emph{assignment of optimal voting weights} to the representatives
of each group, ensuring fair representation in the face of population
imbalances or correlated voting behaviour (see \cite{FelsMach1998}
for a treatment of voting power).

This challenge is classically illustrated in institutions like the
Council of the European Union or the United Nations Security Council.
However, its significance extends far beyond geopolitics. In modern
automated systems, such as recommendation engines, online platforms
aggregating user preferences, and AI-based decision frameworks, users'
preferences often exhibit correlated structures. Here too, the need
arises to determine how individual or subgroup preferences should
be weighted when computing global outcomes. As such, tools from statistical
mechanics, and in particular \emph{statistical estimation methods}
applied to the Curie-Weiss model, provide both a conceptual and practical
bridge between physical models and real-world decision-making systems.

The present article contributes to this line of inquiry by addressing
the statistical estimation of the coupling parameters $\beta_{\lambda}$
in a multi-group Curie-Weiss model, a quantity which modulates the
strength of interaction between agents or voters within each of the
$M\in\IN$ groups indexed by $\lambda\in\IN_{M}$\footnote{We will write $\IN_{m}\coloneq\left\{ 1,\ldots,m\right\} $ for any
$m\in\IN$}. Estimating $\beta_{\lambda}$ from observed voting data yields critical
insights into the degree of collective behaviour or ideological alignment,
which can, in turn, inform the derivation of optimal weights in two-tier
voting systems. Section \ref{sec:Optimal-Weights} of this article
explores this application in detail under the assumption that voters
interact within each group but not across group boundaries, connecting
theoretical results to policy-relevant and business-related settings.
Imagine a population which is divided into $M$ groups along cultural
or national lines. An example is the European Union with its 27 member
states. Each member state sends a representative to the Council of
the European Union, where votes take place according to a set of fairly
complicated rules. We consider the simpler case of a weighted voting
system: each member state has a certain voting weight, and its representative's
vote in the council is multiplied by the voting weight. Examples of
weighted voting systems are `one person, one vote' in popular votes
or voting weights proportional to the population of each country in
a council. By imposing a `fairness criterion,' we can determine
what the weights in the council ought to be. The theoretical optimal
weights may depend on the underlying voting model (such as the Curie-Weiss
model treated in this article) which describes the population's voting
behaviour in probabilistic terms. If we reconstruct the underlying
voting model, we also obtain estimators for the optimal voting weights,
which can then be calculated from a sample of observations.

Prior work has been done on the estimation problem of parameters of
spin models such as those we mentioned. A classical reference is the
book \cite{Kastelei1956}. The estimation of the interaction parameter
from data belonging the realm of the social sciences was studied in
\cite{GalBarCo2009}. Work has also been done on the estimation of
parameters in more complicated models referred to as spin glass models
with random interaction structure (see \cite{Chatterj2007,CheSenWu2024}).

Our methodology is grounded in \emph{maximum likelihood estimation},
which is a natural and widely used approach in parametric inference,
provided we have a good reason to assume we know the underlying statistical
model which generated the data. The maximum likelihood estimator for
the coupling parameter  is particularly attractive because it is \emph{well-defined
for all values of $\beta_{\lambda}$}, including for parameter values
close to the critical regime (and typical samples from this distribution).
Furthermore, the maximum likelihood estimator enjoys desirable statistical
properties: it is \emph{consistent}, meaning it converges in probability
to the true parameter value as the number of observations increases.
It is \emph{asymptotically normal}, allowing for the construction
of confidence intervals and hypothesis tests. The estimator satisfies
\emph{large deviation principles}, providing robust upper bounds on
the probability of estimation errors in finite samples.

The main computational drawback of this approach is the necessity
to compute the normalisation constant (called partition function in
statistical physics) of the Curie-Weiss model, which scales exponentially
with the number of voters. This challenge is well known in the literature
and has inspired a range of approximation techniques. The path of
further research will lead us to consider maximum likelihood estimators
based on an approximation to the maximum likelihood optimality condition
(\ref{eq:opt}), which sidesteps the costly calculation of the partition
function at the cost of sacrificing the ability to calculate estimators
for any possible sample. Another avenue leads to alternate estimators
based on observing a sample of the votes from a subset of the population.
A third direction we are planning to explore is the generalisation
of the Curie-Weiss model to interacting groups of voters. This research
programme will hopefully provide a clear picture of how to estimate
interaction parameters in voting models, with applicability beyond
the Curie-Weiss model.

In summary, this article situates the Curie-Weiss model at the intersection
of statistical physics, statistics, and political decision-making.
By developing and exhaustively analysing the maximum likelihood estimator
for the coupling parameters, we aim to provide a rigorous and interpretable
method for quantifying interaction strength in voting populations,
with applications ranging from international councils to algorithmic
aggregation in digital platforms. Given this applicability to different
academic disciplines, we have tried to provide complete and comprehensible
proofs that appeal to a wide audience composed of mathematicians,
statisticians, physicists, economists, and political scientists. The
main results of this article are Propositions \ref{prop:beta^hat_well-def}
and \ref{prop:atyp_beta^hat}, and Theorem \ref{thm:properties_bML_fin}
(to be found in Section \ref{sec:estimator_results}). Since the estimator
we study (see Definition \ref{def:estimator_fin_N}) is defined by
an implicit condition (\ref{eq:opt}), we require Proposition \ref{prop:beta^hat_well-def}
to be certain the estimator is uniquely determined for any sample
of observations. As we will show, there are realisations of the sample
which lead to estimates which do not correspond to our assumption
that the true parameters are non-negative real numbers. Proposition
\ref{prop:atyp_beta^hat} assures us that the probability of these
realisations decays exponentially to 0 as the sample size goes to
infinity. Finally, Theorem \ref{thm:properties_bML_fin} contains
the aforementioned statistical properties of the estimator: consistency,
asymptotic normality, and a large deviation principle.

We present the Curie-Weiss model in the next section. The maximum
likelihood estimator is defined in Section \ref{sec:estimator_results},
where we also state the main results of this article. Sections \ref{sec:Proof_props}
and \ref{sec:Proof_Theorem} contain the proofs of the main results.
Section \ref{sec:Standard_Error} is about the standard error of the
statistics in this article. The topic of optimal weights in two-tier
voting systems is treated in Section \ref{sec:Optimal-Weights}. Finally,
an Appendix to the article contains some auxiliary results we employ.

\section{The Curie-Weiss Model}

Let the sets $\IN_{N_{\lambda}}$, $\lambda\in\IN_{M}$, represent
$M$ groups of voters. We will denote the space of voting configurations
for this population by
\[
\Omega_{N_{1}+\cdots+N_{M}}\coloneq\left\{ -1,1\right\} ^{N_{1}+\cdots+N_{M}}.
\]
Each individual vote cast $x_{\lambda i}\in\Omega_{1}$ will be indexed
by $\lambda\in\IN_{M}$ denoting the group and $i\in\IN_{N_{\lambda}}$
the identity of the voter in question. We will refer to each $\left(x_{11},\ldots,x_{1N_{1}},\ldots,x_{M1},\ldots,x_{MN_{M}}\right)\in\Omega_{N_{1}+\cdots+N_{M}}$
as a voting configuration, which consists of a complete record of
the votes cast by the entire population on a certain issue. We model
their behaviour in binary voting situations with the following voting
model:
\begin{defn}
\label{def:CWM}Let $N_{\lambda}\in\IN$ and $\beta_{\lambda}\in\IR$,
$\lambda\in\IN_{M}$. We set $\boldsymbol{N}\coloneq\left(N_{1},\ldots,N_{M}\right)$
and $\boldsymbol{\beta}\coloneq\left(\beta_{1},\ldots,\beta_{M}\right)$.
The\emph{ Curie-Weiss model} (CWM) is defined for all voting configurations
$\left(x_{11},\ldots,x_{1N_{1}},\ldots,x_{M1},\ldots,x_{MN_{M}}\right)\in\Omega_{N_{1}+\cdots+N_{M}}$
by
\begin{equation}
\IP_{\boldsymbol{\beta},\boldsymbol{N}}\left(X_{11}=x_{11},\ldots,X_{MN_{M}}=x_{MN_{M}}\right)\coloneq Z_{\boldsymbol{\beta},\boldsymbol{N}}^{-1}\,\exp\left(\frac{1}{2}\sum_{\lambda=1}^{M}\frac{\beta_{\lambda}}{N_{\lambda}}\left(\sum_{i=1}^{N_{\lambda}}x_{\lambda i}\right)^{2}\right),\label{eq:CWM}
\end{equation}
where $Z_{\boldsymbol{\beta},\boldsymbol{N}}$ is a normalisation
constant called the partition function which depends on $\boldsymbol{\beta}$
and $\boldsymbol{N}$. The constants $\beta_{\lambda}$ are called
coupling parameters.
\end{defn}

In the physical context of the CWM as a model of ferromagnetism, where
$M=1$, there is a single coupling parameter $\beta$ which is the
inverse temperature. As such, the range of values for $\beta$ is
usually $\left[0,\infty\right)$. For technical reasons to do with
the range of the statistic employed to calculate the maximum likelihood
estimator, we will consider the more general definition given above.

However, as a model of voting, the CWM has non-negative coupling parameters
to reflect social cohesion. In the voting context, the coupling parameters
$\beta_{\lambda}$ measure the degree of influence the voters in group
$\lambda$ exert over each other, with the influence becoming stronger
the larger $\beta_{\lambda}$ is. As we see, the most probable voting
configurations are those with unanimous votes in favour of or against
the proposal. However, there are only two of these extreme configurations,
whereas there is a multitude of low probability configurations with
roughly equal numbers of votes for and against. This is the `conflict
between energy and entropy.' Which one of these pseudo forces dominates
depends on the magnitude of the coupling parameters.

The partition function $Z_{\boldsymbol{\beta},\boldsymbol{N}}$ is
defined by
\begin{equation}
Z_{\boldsymbol{\beta},\boldsymbol{N}}=\sum_{x\in\Omega_{N_{1}+\cdots+N_{M}}}\exp\left(\frac{1}{2}\sum_{\lambda=1}^{M}\frac{\beta_{\lambda}}{N_{\lambda}}\left(\sum_{i=1}^{N_{\lambda}}x_{\lambda i}\right)^{2}\right).\label{eq:part_fn}
\end{equation}
We set
\begin{equation}
S_{\lambda}\coloneq\sum_{i=1}^{N_{\lambda}}X_{\lambda i},\quad\lambda\in\IN_{M}.\label{eq:S_lambda}
\end{equation}
The key to understanding the behaviour of the CWM is the random vector
\[
\left(S_{1},\ldots,S_{M}\right),
\]
which represents the voting margins, i.e.\! the difference between
the numbers of yes and no votes, in each group.

\begin{notation}Throughout this article, we will use the symbol $\IE X$
for the expectation and $\IV X$ for the variance of some random variable
$X$. Capital letters such as $X$ will denote random variables, while
lower case letters such as $x$ will denote realisations of the corresponding
random variable.\end{notation}

\section{\label{sec:estimator_results}Maximum Likelihood Estimation of $\boldsymbol{\beta}$}

\subsection{The Maximum Likelihood Estimator $\hat{\boldsymbol{\beta}}_{\boldsymbol{N}}$}

We will denote by $n\in\IN$ the size of a sample of observations.
Then each sample takes values in the space
\[
\Omega_{N_{1}+\cdots+N_{M}}^{n}\coloneq\prod_{i=1}^{n}\Omega_{N_{1}+\cdots+N_{M}}.
\]
We assume that we have access to a sample of voting configurations
$\left(x^{(1)},\ldots,x^{(n)}\right)\in\Omega_{N_{1}+\cdots+N_{M}}^{n}$
composed of $n\in\IN$ i.i.d. realisations of $\left(X_{11},\ldots,X_{MN_{\lambda}}\right)$
from the CWM. The density function for such a sample is given by the
$n$-fold product of (\ref{eq:CWM}):
\begin{equation}
f\left(x^{(1)},\ldots,x^{(n)};\boldsymbol{\beta}\right)\coloneq Z_{\boldsymbol{\beta},\boldsymbol{N}}^{-n}\,\prod_{t=1}^{n}\exp\left(\frac{1}{2}\sum_{\lambda=1}^{M}\frac{\beta_{\lambda}}{N_{\lambda}}\left(\sum_{i=1}^{N_{\lambda}}x_{\lambda i}^{\left(t\right)}\right)^{2}\right).\label{eq:density_fn}
\end{equation}
For fixed $\left(x^{(1)},\ldots,x^{(n)}\right)$, the function $\boldsymbol{\beta}\in\IR^{M}\mapsto f\left(x^{(1)},\ldots,x^{(n)};\boldsymbol{\beta}\right)\in\IR$
is called the likelihood function, and
\begin{equation}
\ln f\left(x^{(1)},\ldots,x^{(n)};\boldsymbol{\beta}\right)=-n\ln Z_{\boldsymbol{\beta},\boldsymbol{N}}+\frac{1}{2}\sum_{t=1}^{n}\sum_{\lambda=1}^{M}\frac{\beta_{\lambda}}{N_{\lambda}}\left(\sum_{i=1}^{N_{\lambda}}x_{\lambda i}^{\left(t\right)}\right)^{2}\label{eq:_log_like}
\end{equation}
is the log-likelihood function.

The maximum likelihood estimator $\hat{\boldsymbol{\beta}}_{ML}$
of $\boldsymbol{\beta}$ given the sample $\left(x^{(1)},\ldots,x^{(n)}\right)$
is the value which maximises the likelihood function, i.e.\!
\[
\hat{\boldsymbol{\beta}}_{ML}\coloneq\underset{\beta'}{\arg\max}\;f\left(x^{(1)},\ldots,x^{(n)};\boldsymbol{\beta}'\right).
\]
Since $x\mapsto\ln x$ is a strictly increasing function, we can instead
identify $\hat{\boldsymbol{\beta}}_{ML}$ as the value which maximises
the log-likelihood function
\[
\hat{\boldsymbol{\beta}}_{ML}=\underset{\beta'}{\arg\max}\;\ln f\left(x^{(1)},\ldots,x^{(n)};\boldsymbol{\beta}'\right).
\]
To find the maximum of the log-likelihood function, we derive with
respect to each $\beta_{\lambda}$ and equate to 0:
\begin{align}
\frac{\textup{d}\ln f\left(x^{(1)},\ldots,x^{(n)};\boldsymbol{\beta}\right)}{\textup{d}\beta_{\lambda}} & =-\frac{n}{Z_{\boldsymbol{\beta},\boldsymbol{N}}}\frac{\textup{d}Z_{\boldsymbol{\beta},\boldsymbol{N}}}{\textup{d}\beta_{\lambda}}+\frac{1}{2N_{\lambda}}\sum_{t=1}^{n}\left(\sum_{i=1}^{N_{\lambda}}x_{\lambda i}^{\left(t\right)}\right)^{2}\overset{!}{=}0.\label{eq:FOC}
\end{align}
We continue with our calculation of the maximum likelihood estimator.
The squared sums $S_{\lambda}^{2}$ defined in (\ref{eq:S_lambda})
have expectation
\begin{align}
\IE_{\boldsymbol{\beta},\boldsymbol{N}}S_{\lambda}^{2} & =\frac{\textup{d}Z_{\boldsymbol{\beta},\boldsymbol{N}}}{\textup{d}\beta_{\lambda}}\cdot2N_{\lambda}\,Z_{\boldsymbol{\beta},\boldsymbol{N}}^{-1}\label{eq:S2_Z}
\end{align}
by Lemma \ref{lem:Z_deriv}. We substitute (\ref{eq:S2_Z}) into (\ref{eq:FOC}):
\[
\frac{\textup{d}\ln f\left(x^{(1)},\ldots,x^{(n)};\boldsymbol{\beta}\right)}{\textup{d}\beta_{\lambda}}=-\frac{n}{Z_{\boldsymbol{\beta},\boldsymbol{N}}}\frac{1}{2N_{\lambda}}Z_{\boldsymbol{\beta},\boldsymbol{N}}\IE_{\boldsymbol{\beta},\boldsymbol{N}}S_{\lambda}^{2}+\frac{1}{2N_{\lambda}}\sum_{t=1}^{n}\left(\sum_{i=1}^{N}x_{i}^{(t)}\right)^{2}.
\]
Then the optimality condition $\left.\frac{\textup{d}\ln f\left(x^{(1)},\ldots,x^{(n)};\boldsymbol{\beta}\right)}{\textup{d}\beta}\right|_{\beta=\hat{\beta}_{ML}}=0$
is equivalent to
\begin{equation}
\IE_{\hat{\boldsymbol{\beta}}_{ML},N}S_{\lambda}^{2}=\frac{1}{n}\sum_{t=1}^{n}\left(\sum_{i=1}^{N}x_{i}^{(t)}\right)^{2}.\label{eq:opt}
\end{equation}

\begin{defn}
\label{def:T_stat}We define the statistic $\,\boldsymbol{T}:\Omega_{N_{1}+\cdots+N_{M}}^{n}\rightarrow\IR$
for any realisation of the sample $x=\left(x^{(1)},\ldots,x^{(n)}\right)\in\Omega_{N_{1}+\cdots+N_{M}}^{n}$
by
\[
\boldsymbol{T}\left(x\right)\coloneq\frac{1}{n}\sum_{t=1}^{n}\left(\left(\sum_{i=1}^{N_{1}}x_{1i}^{(t)}\right)^{2},\ldots,\left(\sum_{i=1}^{N_{M}}x_{Mi}^{(t)}\right)^{2}\right).
\]
\end{defn}

\begin{rem}
$\boldsymbol{T}$ is a random vector on the probability space $\Omega_{N_{1}+\cdots+N_{M}}^{n}$
with the power set of $\Omega_{N_{1}+\cdots+N_{M}}^{n}$ as the $\sigma$-algebra
and the probability measure defined by the density function (\ref{eq:density_fn}).
We will write $\boldsymbol{T}$ for this random variable and $\boldsymbol{T}\left(x\right)$
for its realisation given a sample $x\in\Omega_{N_{1}+\cdots+N_{M}}^{n}$.
\end{rem}

\begin{prop}
\label{prop:suff}$\boldsymbol{T}$ is a sufficient statistic for
$\boldsymbol{\beta}$.
\end{prop}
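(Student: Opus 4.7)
The plan is to apply the Fisher--Neyman factorisation criterion, which states that a statistic $\boldsymbol{T}$ is sufficient for $\boldsymbol{\beta}$ if and only if the density $f(x;\boldsymbol{\beta})$ can be written as $g(\boldsymbol{T}(x),\boldsymbol{\beta})\cdot h(x)$, where $g$ depends on the sample only through $\boldsymbol{T}(x)$ and $h$ does not depend on $\boldsymbol{\beta}$.

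The key observation is that the exponent in the density (\ref{eq:density_fn}) already depends on the sample only through the squared voting margins $\left(\sum_{i=1}^{N_{\lambda}}x_{\lambda i}^{(t)}\right)^{2}$ summed over $t\in\IN_n$. I would swap the product over $t$ for a sum inside the exponential to obtain
\[
f\left(x^{(1)},\ldots,x^{(n)};\boldsymbol{\beta}\right)=Z_{\boldsymbol{\beta},\boldsymbol{N}}^{-n}\exp\left(\frac{1}{2}\sum_{\lambda=1}^{M}\frac{\beta_{\lambda}}{N_{\lambda}}\sum_{t=1}^{n}\left(\sum_{i=1}^{N_{\lambda}}x_{\lambda i}^{(t)}\right)^{2}\right),
\]
and then recognise that the inner double sum equals $n\,T_{\lambda}(x)$, the $\lambda$-th coordinate of $n\boldsymbol{T}(x)$ as defined in Definition \ref{def:T_stat}.

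Substituting this identification yields a factorisation with $h(x)\equiv 1$ and
\[
g\left(\boldsymbol{T}(x),\boldsymbol{\beta}\right)=Z_{\boldsymbol{\beta},\boldsymbol{N}}^{-n}\exp\left(\frac{n}{2}\sum_{\lambda=1}^{M}\frac{\beta_{\lambda}}{N_{\lambda}}T_{\lambda}(x)\right),
\]
from which sufficiency follows immediately by the factorisation theorem. There is essentially no obstacle here: the CWM is an exponential family with natural parameter $\left(\beta_{\lambda}/N_{\lambda}\right)_{\lambda\in\IN_{M}}$ and natural statistic $n\boldsymbol{T}/2$, so sufficiency is a direct consequence of the exponential-family structure. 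The only minor points to be careful about are the routine algebraic manipulation bringing the product into the exponent and citing (or briefly stating) the factorisation criterion, which the paper's intended interdisciplinary audience may appreciate.
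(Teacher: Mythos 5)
Your proposal is correct and coincides with the paper's own argument: the paper likewise collapses the product over $t$ into a single exponential, recognises the exponent as $\frac{n}{2}\boldsymbol{T}(x)\cdot(\beta_{1}/N_{1},\ldots,\beta_{M}/N_{M})^{\top}$, and concludes sufficiency from the resulting factorisation with $h(x)\equiv1$. Your additional remark about the exponential-family structure is a correct observation but does not change the argument.
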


We will prove this proposition in Section \ref{sec:Proof_props}.

\begin{notation}\label{notation:infty}We will write $\left[-\infty,\infty\right]$
for the compactification $\IR\cup\left\{ -\infty,\infty\right\} $
and $\left[0,\infty\right]$ for $\left[0,\infty\right)\cup\left\{ \infty\right\} $.\end{notation}
\begin{defn}[Maximum Likelihood Estimator]
\label{def:estimator_fin_N}Let $\boldsymbol{N}\in\IN^{M}$ and $\boldsymbol{\beta}\in\IR^{M}$.
The \emph{maximum likelihood estimator} of the parameter $\boldsymbol{\beta}$
is given by $\hat{\boldsymbol{\beta}}_{\boldsymbol{N}}:\Omega_{N_{1}+\cdots+N_{M}}^{n}\rightarrow\left[-\infty,\infty\right]^{M}$
such that the optimality condition (\ref{eq:opt}) holds:
\[
\IE_{\hat{\boldsymbol{\beta}}_{\boldsymbol{N}}\left(x\right),\boldsymbol{N}}\,\left(S_{1}^{2},\ldots,S_{M}^{2}\right)=\boldsymbol{T}\left(x\right),\quad x\in\Omega_{N_{1}+\cdots+N_{M}}^{n}.
\]
\end{defn}

\subsection{Main Results of the Article}

In the remainder of this section, we will state the main results concerning
the estimator $\hat{\boldsymbol{\beta}}_{\boldsymbol{N}}$. Symbols
such as `$\xrightarrow[n\rightarrow\infty]{\textup{p}}$' and `$\mathcal{N}\left(0,\sigma^{2}\right)$'
are used in a standard way (cf. Notation \ref{not:conv}).
\begin{prop}
\label{prop:beta^hat_well-def}Let $\boldsymbol{N}\in\IN^{M}$ and
$n\in\IN$. For each sample $x\in\Omega_{N_{1}+\cdots+N_{M}}^{n}$,
there is a unique value $y\in\left[-\infty,\infty\right]^{M}$ such
that $\IE_{y,\boldsymbol{N}}\left(S_{1}^{2},\ldots,S_{M}^{2}\right)=\boldsymbol{T}\left(x\right)$
holds.
\end{prop}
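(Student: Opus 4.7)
The plan is to exploit the product structure of the multi-group Curie-Weiss model, reducing the $M$-dimensional equation $\IE_{y,\boldsymbol{N}}(S_1^2,\ldots,S_M^2)=\boldsymbol{T}(x)$ to $M$ independent scalar equations. Because no cross-group terms appear in the exponent of (\ref{eq:CWM}), the joint law $\IP_{\boldsymbol{\beta},\boldsymbol{N}}$ factorises as a product of single-group Curie-Weiss measures. In particular $S_1,\ldots,S_M$ are independent and the marginal of $S_\lambda$ depends on $\boldsymbol{\beta}$ only through $\beta_\lambda$. Writing $g_\lambda(\beta)\coloneq\IE_{\beta,N_\lambda}S_\lambda^2$ and $T_\lambda(x)$ for the $\lambda$-th entry of $\boldsymbol{T}(x)$, the system decouples into the scalar equations $g_\lambda(y_\lambda)=T_\lambda(x)$ for $\lambda\in\IN_M$, and it suffices to prove each one admits a unique solution in $[-\infty,\infty]$.

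The second step is to show that each $g_\lambda$ extends continuously to a strictly increasing bijection $[-\infty,\infty]\to[s_\lambda^{\min},N_\lambda^2]$, where $s_\lambda^{\min}$ is $0$ if $N_\lambda$ is even and $1$ if $N_\lambda$ is odd. Differentiating (\ref{eq:S2_Z}) with respect to $\beta_\lambda$ (in effect applying the method of Lemma \ref{lem:Z_deriv} twice) yields
\[
g_\lambda'(\beta)=\frac{1}{2N_\lambda}\IV_{\beta,N_\lambda}(S_\lambda^2),
\]
which is strictly positive provided $S_\lambda^2$ is not almost surely constant, i.e.\ provided $N_\lambda\geq 2$. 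The boundary limits $\lim_{\beta\to-\infty}g_\lambda(\beta)=s_\lambda^{\min}$ and $\lim_{\beta\to\infty}g_\lambda(\beta)=N_\lambda^2$ follow by dividing numerator and denominator of the explicit ratio defining $g_\lambda(\beta)$ by the dominant exponential and letting the subdominant terms vanish. The degenerate case $N_\lambda=1$ forces $S_\lambda^2\equiv 1$ and $T_\lambda(x)=1$ automatically, rendering $\beta_\lambda$ unidentifiable; this edge case must be flagged and handled by convention, otherwise uniqueness fails trivially.

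Finally, each coordinate $T_\lambda(x)$ is an arithmetic mean of values $(\sum_i x_{\lambda i}^{(t)})^2$, each of which lies in $[s_\lambda^{\min},N_\lambda^2]$; hence $T_\lambda(x)\in[s_\lambda^{\min},N_\lambda^2]$. Combining this containment with the intermediate value theorem and strict monotonicity of the extended $g_\lambda$ yields a unique $y_\lambda\in[-\infty,\infty]$ with $g_\lambda(y_\lambda)=T_\lambda(x)$; assembling the coordinates gives the unique $y\in[-\infty,\infty]^M$ claimed. The main technical obstacle is the strict positivity of $g_\lambda'$: the factorisation across groups and the boundary-limit computations at $\pm\infty$ are essentially bookkeeping, but ruling out plateau-like behaviour of $g_\lambda$ via the variance identity is precisely what forces uniqueness and is where the proof genuinely has to bite.
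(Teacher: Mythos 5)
Your proof follows essentially the same route as the paper: decouple the groups via the product structure of the measure, establish strict monotonicity of $\beta\mapsto\IE_{\beta,N_{\lambda}}S_{\lambda}^{2}$ through the variance identity $\vartheta_{N_{\lambda}}'\left(\beta\right)=\frac{1}{2N_{\lambda}}\IV_{\beta,N_{\lambda}}S_{\lambda}^{2}$, compute the boundary limits $\kappa$ and $N_{\lambda}^{2}$ by isolating the dominant exponential weights, and conclude from continuity together with the containment $T_{\lambda}\left(x\right)\in\left[\kappa,N_{\lambda}^{2}\right]$. Your remark about $N_{\lambda}=1$ is a genuine, if trivial, catch that the paper overlooks: there $S_{\lambda}^{2}\equiv1$ is almost surely constant, the variance vanishes, $\vartheta_{1}$ is identically equal to $1$, and every $y_{\lambda}\in\left[-\infty,\infty\right]$ solves the equation, so uniqueness as stated fails and the paper's argument silently requires $N_{\lambda}\geq2$.
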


Therefore, the estimator $\hat{\boldsymbol{\beta}}_{\boldsymbol{N}}$
is uniquely determined for any realisation $x\in\Omega_{N_{1}+\cdots+N_{M}}^{n}$
of the sample.
\begin{prop}
\label{prop:atyp_beta^hat}Let $\boldsymbol{N}\in\IN^{M}$. For each
value of the coupling constants $\boldsymbol{\beta}>0$ \footnote{For all $x\in\IR^{M}$, $x>0$ is to be interpreted coordinate-wise,
i.e.\! $x_{i}>0,i\in\IN_{M}$.}, there is a constant $\bar{\delta}>0$ such that
\[
\IP\left\{ \hat{\boldsymbol{\beta}}_{\boldsymbol{N}}\notin\left[0,\infty\right)\right\} \leq2^{M}\exp\left(-\bar{\delta}n\right)
\]
holds for all $n\in\IN$.
\end{prop}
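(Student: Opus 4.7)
The approach is to reduce the event $\{\hat{\boldsymbol{\beta}}_{\boldsymbol{N}} \notin [0,\infty)^M\}$ to a union of tail events for the sufficient statistic $\boldsymbol{T}$ and to control each of these by a standard concentration or unanimity estimate. The crucial structural fact is that the density (\ref{eq:CWM}) factorises across groups, so that $S_{1},\ldots,S_{M}$ are independent under $\IP_{y,\boldsymbol{N}}$ and the quantity $g_{\lambda}(y_{\lambda})\coloneqq\IE_{y,\boldsymbol{N}}S_{\lambda}^{2}$ depends only on the single coordinate $y_{\lambda}$ (and on $N_{\lambda}$). I will then draw on the proof of Proposition \ref{prop:beta^hat_well-def}, which must establish that each $g_{\lambda}$ is a continuous, strictly increasing bijection from $[-\infty,\infty]$ onto $[m_{\lambda},N_{\lambda}^{2}]$, where $m_{\lambda}\in\{0,1\}$ is the minimum of $S_{\lambda}^{2}$. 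Since at $y_{\lambda}=0$ the spins of group $\lambda$ are i.i.d.\ Rademacher, $g_{\lambda}(0)=N_{\lambda}$, and strict monotonicity then yields the equivalence $\hat{\beta}_{\lambda}(x)\in[0,\infty)$ if and only if $T_{\lambda}(x)\in[N_{\lambda},N_{\lambda}^{2})$.

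A union bound over $\lambda\in\IN_{M}$ thus gives
\[
\IP_{\boldsymbol{\beta},\boldsymbol{N}}\bigl\{\hat{\boldsymbol{\beta}}_{\boldsymbol{N}}\notin[0,\infty)^{M}\bigr\}\;\leq\;\sum_{\lambda=1}^{M}\Bigl(\IP_{\boldsymbol{\beta},\boldsymbol{N}}\{T_{\lambda}<N_{\lambda}\}+\IP_{\boldsymbol{\beta},\boldsymbol{N}}\{T_{\lambda}=N_{\lambda}^{2}\}\Bigr),
\]
and it remains to bound each of the $2M$ summands by $e^{-\bar{\delta}n}$ for a suitable $\bar{\delta}>0$. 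For the lower tail, $\boldsymbol{\beta}>0$ together with strict monotonicity of $g_{\lambda}$ produces a positive gap $\Delta_{\lambda}\coloneqq g_{\lambda}(\beta_{\lambda})-N_{\lambda}>0$. Since $T_{\lambda}$ is the empirical mean of $n$ i.i.d.\ copies of $S_{\lambda}^{2}\in[0,N_{\lambda}^{2}]$, Hoeffding's inequality gives $\IP_{\boldsymbol{\beta},\boldsymbol{N}}\{T_{\lambda}<N_{\lambda}\}\leq\exp(-2\Delta_{\lambda}^{2}n/N_{\lambda}^{4})$. For the boundary event, $T_{\lambda}(x)=N_{\lambda}^{2}$ holds precisely when $|S_{\lambda}^{(t)}|=N_{\lambda}$ for every $t\in\IN_{n}$, i.e.\ when group $\lambda$ votes unanimously in each of the $n$ independent draws. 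Setting $p_{\lambda}\coloneqq\IP_{\boldsymbol{\beta},\boldsymbol{N}}\{|S_{\lambda}|=N_{\lambda}\}$, which is strictly less than $1$ because $\beta_{\lambda}$ is finite, independence gives $\IP_{\boldsymbol{\beta},\boldsymbol{N}}\{T_{\lambda}=N_{\lambda}^{2}\}=p_{\lambda}^{n}=\exp(-n\log(1/p_{\lambda}))$.

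Taking $\bar{\delta}$ to be the minimum over $\lambda\in\IN_{M}$ of the quantities $2\Delta_{\lambda}^{2}/N_{\lambda}^{4}$ and $\log(1/p_{\lambda})$ yields an exponent that depends on $\boldsymbol{\beta}$ and $\boldsymbol{N}$ but not on $n$; the overall union bound then gives the claimed estimate after the crude overestimate $2M\leq2^{M}$, valid for every $M\geq1$. The step I expect to be the main obstacle is the first one: transferring the well-definedness of $\hat{\beta}_{\lambda}$ on $[-\infty,\infty]$ into the precise characterisation $\hat{\beta}_{\lambda}\in[0,\infty)\Leftrightarrow T_{\lambda}\in[N_{\lambda},N_{\lambda}^{2})$. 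This needs both the strict monotonicity of $g_{\lambda}$ (for which the identity (\ref{eq:S2_Z}) should produce $g_{\lambda}^{\prime}$ as a variance, hence positive) and the correct evaluation of the boundary limits $g_{\lambda}(\pm\infty)$ by concentration on the extremal configurations; both properties are naturally part of the groundwork laid in Proposition \ref{prop:beta^hat_well-def}.
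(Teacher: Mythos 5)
Your proposal is correct, and it follows the paper's structural skeleton — reduce the event to $T_{\lambda}\notin[N_{\lambda},N_{\lambda}^{2})$ via the strict monotonicity and boundary values of $\beta\mapsto\IE_{\beta,N_{\lambda}}S_{\lambda}^{2}$ (the paper's Proposition \ref{prop:ES2_fin}, proved en route to Proposition \ref{prop:beta^hat_well-def}, exactly as you anticipated) — but it diverges in the quantitative step. The paper proves a dedicated tail estimate (Proposition \ref{prop:atyp_T}) by the Cram\'er--Chernoff method: Markov's inequality applied to $\exp(nx(T-N))$, optimised over $x$, yields $\IP\{T<N\}\leq\exp(-n\Lambda_{S^{2}}^{*}(N))$ and $\IP\{T\geq N^{2}\}\leq\exp(-n\Lambda_{S^{2}}^{*}(N^{2}))$, with positivity of the exponents coming from the monotonicity of the entropy function $\Lambda_{S^{2}}^{*}$ away from its minimum at $\IE_{\beta,N}S^{2}$ (Lemma \ref{lem:cumulant_entropy}). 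You instead use Hoeffding for the lower tail and the exact identity $\IP\{T_{\lambda}=N_{\lambda}^{2}\}=p_{\lambda}^{n}$ with $p_{\lambda}=\IP\{|S_{\lambda}|=N_{\lambda}\}<1$ for the upper boundary. This is more elementary — it bypasses the entire Legendre-transform apparatus (Lemmas \ref{lem:conv}--\ref{lem:cumulant_entropy}) — at the cost of a weaker lower-tail exponent $2\Delta_{\lambda}^{2}/N_{\lambda}^{4}$ versus the optimal $\Lambda_{S_{\lambda}^{2}}^{*}(N_{\lambda})$; your boundary computation is in fact exact and coincides with $\Lambda_{S_{\lambda}^{2}}^{*}(N_{\lambda}^{2})=\ln(1/p_{\lambda})$. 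Since the proposition only asserts existence of some $\bar{\delta}>0$, this loss is harmless. One further point in your favour: your aggregation over groups (union bound, $\bar{\delta}$ a minimum, $2M\leq2^{M}$) is the correct way to bound the event that \emph{some} coordinate of $\hat{\boldsymbol{\beta}}_{\boldsymbol{N}}$ leaves $[0,\infty)$, whereas the paper's Remark \ref{rem:delta_bar} sets $\bar{\delta}=\sum_{\lambda}\delta_{\lambda}$, a rate that matches the $2^{M}$ prefactor only under a product (all-coordinates-atypical) reading of the event; your treatment of this step is the more careful one.
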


In (\ref{eq:delta_bar}), we will state the value of the constant
$\bar{\delta}$ in the proposition above. Proposition \ref{prop:atyp_beta^hat}
says that although the estimator $\hat{\boldsymbol{\beta}}_{\boldsymbol{N}}$
can assume negative and even infinite values, these realisations are
exponentially unlikely as the sample size increases given our assumption
on the true value of $\boldsymbol{\beta}$ meant to reflect social
cohesion.

Finally, we state a theorem about the statistical properties of the
estimator $\hat{\boldsymbol{\beta}}_{\boldsymbol{N}}$.
\begin{thm}
\label{thm:properties_bML_fin}Fix $\boldsymbol{N}\in\IN^{M}$ and
$\boldsymbol{\beta}>0$. The estimator $\hat{\boldsymbol{\beta}}_{\boldsymbol{N}}$
has the following properties:
\begin{enumerate}
\item $\hat{\boldsymbol{\beta}}_{\boldsymbol{N}}$ is consistent: $\hat{\boldsymbol{\beta}}_{\boldsymbol{N}}\xrightarrow[n\rightarrow\infty]{\textup{p}}\boldsymbol{\beta}$.
\item $\hat{\boldsymbol{\beta}}_{\boldsymbol{N}}$ is asymptotically normal:
$\sqrt{n}\left(\hat{\boldsymbol{\beta}}_{\boldsymbol{N}}-\boldsymbol{\beta}\right)\xrightarrow[n\rightarrow\infty]{\textup{d}}\mathcal{N}\left(0,\Sigma\right)$,
where the covariance matrix $\Sigma$ is diagonal with entries $\Sigma_{\lambda\lambda}=\frac{4N_{\lambda}^{2}}{\IV_{\beta_{\lambda},N_{\lambda}}S_{\lambda}^{2}},\lambda\in\IN_{M}$.
\item $\hat{\boldsymbol{\beta}}_{\boldsymbol{N}}$ satisfies a large deviations
principle with rate $n$ and rate function $\boldsymbol{J}$ defined
in (\ref{eq:bold_J}). $\boldsymbol{J}$ has a unique minimum at $\boldsymbol{\beta}$,
and we have for each closed set $K\subset\left[-\infty,\infty\right]^{M}$
that does not contain $\boldsymbol{\beta}$, $\inf_{y\in K}\boldsymbol{J}\left(y\right)>0$
and
\[
\IP\left\{ \hat{\boldsymbol{\beta}}_{\boldsymbol{N}}\in K\right\} \leq2^{M}\exp\left(-n\inf_{y\in K}\boldsymbol{J}\left(y\right)\right)
\]
for all $n\in\IN$.
\end{enumerate}
\end{thm}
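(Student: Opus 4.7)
The plan is to exploit the factorisation of the density (\ref{eq:density_fn}) across groups: under $\IP_{\boldsymbol{\beta},\boldsymbol{N}}$ the $M$ blocks $(X_{\lambda 1},\ldots,X_{\lambda N_\lambda})$, $\lambda\in\IN_{M}$, are independent, so each coordinate $\hat{\beta}_{\lambda}$ is a function of $T_{\lambda}$ alone and the analysis decouples across $\lambda$. I would define $g_{\lambda}(\beta)\coloneq\IE_{\beta,N_{\lambda}}S_{\lambda}^{2}$ on $[-\infty,\infty]$; a second application of the argument behind (\ref{eq:S2_Z}) shows $g_{\lambda}'(\beta)=\IV_{\beta,N_{\lambda}}(S_{\lambda}^{2})/(2N_{\lambda})>0$, so $g_{\lambda}$ is a smooth strictly increasing homeomorphism onto a compact sub-interval of $[0,N_{\lambda}^{2}]$; by Proposition~\ref{prop:beta^hat_well-def} one then has the explicit inversion $\hat{\beta}_{\lambda}=g_{\lambda}^{-1}(T_{\lambda})$. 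With this set-up, all three items will follow from classical limit theorems applied to $T_{\lambda}$ and transported through $g_{\lambda}^{-1}$.

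For (i), I would apply the strong law of large numbers to the bounded i.i.d.\ variables $\bigl(\sum_{i}X_{\lambda i}^{(t)}\bigr)^{2}$ to obtain $T_{\lambda}\to g_{\lambda}(\beta_{\lambda})$ almost surely; continuity of $g_{\lambda}^{-1}$ at $g_{\lambda}(\beta_{\lambda})$ and the continuous mapping theorem then give $\hat{\beta}_{\lambda}\xrightarrow[n\to\infty]{\textup{p}}\beta_{\lambda}$. For (ii), the standard CLT gives $\sqrt{n}(T_{\lambda}-g_{\lambda}(\beta_{\lambda}))\xrightarrow[n\to\infty]{\textup{d}}\mathcal{N}(0,\IV_{\beta_{\lambda},N_{\lambda}}S_{\lambda}^{2})$, and I would invoke the delta method with derivative $(g_{\lambda}^{-1})'(g_{\lambda}(\beta_{\lambda}))=1/g_{\lambda}'(\beta_{\lambda})=2N_{\lambda}/\IV_{\beta_{\lambda},N_{\lambda}}S_{\lambda}^{2}$ to produce the stated $\Sigma_{\lambda\lambda}$; the diagonality of $\Sigma$ is immediate from independence across $\lambda$.

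For (iii), I would proceed via Cram\'er's theorem and the contraction principle. For fixed $\lambda$, $T_{\lambda}$ is an empirical mean of i.i.d.\ bounded variables, so Cram\'er furnishes an LDP with rate function $I_{\lambda}$ equal to the Legendre transform of the log moment generating function of $S_{\lambda}^{2}$ under $\IP_{\beta_{\lambda},N_{\lambda}}$; by independence, $\boldsymbol{T}$ satisfies an LDP with additive rate $\boldsymbol{I}(\boldsymbol{t})=\sum_{\lambda}I_{\lambda}(t_{\lambda})$, and the contraction principle applied to the homeomorphism $g^{-1}$ transfers this to $\hat{\boldsymbol{\beta}}_{\boldsymbol{N}}$ with rate $\boldsymbol{J}(\boldsymbol{y})=\sum_{\lambda}I_{\lambda}(g_{\lambda}(y_{\lambda}))$, which vanishes uniquely at $\boldsymbol{\beta}$ (so $\inf_{K}\boldsymbol{J}>0$ whenever $K$ is closed and $\boldsymbol{\beta}\notin K$). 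To obtain the finite-$n$ bound with prefactor $2^{M}$, I would argue directly with Chernoff: decompose $g(K)\subset\IR^{M}$ into $2^{M}$ closed ``orthants'' according to the sign pattern of $(t_{\lambda}-g_{\lambda}(\beta_{\lambda}))_{\lambda=1}^{M}$; by the product form of the Cram\'er transform across independent groups, the Chernoff bound on each orthant factors into a product of one-sided bounds, giving probability at most $\exp(-n\inf_{\text{orthant}}\boldsymbol{I})\leq\exp(-n\inf_{K}\boldsymbol{J})$, and summing the $2^{M}$ contributions yields the claim.

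The principal obstacle is this last finite-$n$ refinement: extracting the explicit prefactor $2^{M}$ (rather than the purely asymptotic bound supplied by abstract Cram\'er) forces one to carry out the orthant decomposition and invoke the product form of the Chernoff bound on each orthant explicitly, and one must also verify that $g(K)$ is a closed subset of $\IR^{M}$, which requires tracking the behaviour of $g_{\lambda}$ at $\pm\infty$ so that the compactified domain maps properly. Once the monotonicity and smoothness of $g_{\lambda}$ established at the outset are in hand, parts (i) and (ii) are essentially mechanical.
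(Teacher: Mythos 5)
Your outline coincides with the paper's: decouple the groups, invert $\vartheta_{N_\lambda}(\beta)=\IE_{\beta,N_\lambda}S_\lambda^2$, and push the LLN, CLT and Cram\'er LDP for $T_\lambda$ through $\vartheta_{N_\lambda}^{-1}$ (via continuous mapping, the delta method, and the contraction principle respectively). Parts (i) and (iii) are essentially the paper's argument. But in part (ii) you skip the one step that the paper identifies as the actual difficulty and spends most of its proof on. The map $g_\lambda=\vartheta_{N_\lambda}$ is a homeomorphism from the \emph{compactified} line $\left[-\infty,\infty\right]$ onto $\left[\kappa_\lambda,N_\lambda^2\right]$, so $g_\lambda^{-1}$ takes the values $\pm\infty$ at the endpoints of the range of $T_\lambda$, and those endpoints are hit with positive probability for every $n$ (e.g.\ $T_\lambda=N_\lambda^2$ on an all-unanimous sample, so $\hat\beta_\lambda=\infty$). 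Consequently $\sqrt{n}\left(\hat\beta_\lambda-\beta_\lambda\right)$ is not a real-valued random variable, $g_\lambda^{-1}$ is not (finitely) differentiable on the whole range of $T_\lambda$, and the delta method cannot be ``invoked'' off the shelf: its Taylor-expansion proof breaks down on the event $\left\{ T_\lambda\notin\left(a,b\right)\right\}$ for any $\kappa_\lambda<a<\IE_{\beta_\lambda,N_\lambda}S_\lambda^2<b<N_\lambda^2$. The paper repairs this by truncating $T_\lambda$ to such a compact interval where $\vartheta_{N_\lambda}^{-1}$ is $C^1$ with positive derivative, showing via the Chernoff bound (statement 4 of Proposition \ref{prop:conv_stat}) that $\IP\left\{ T_\lambda\notin\left(a,b\right)\right\} =o\left(n^{-1/2}\right)$, and using Lemma \ref{lem:conv_restr_sequence} to see that the truncation does not change the distributional limit. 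You need some version of this step; without it the claim as you state it is not proved.

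A secondary concern is your orthant decomposition for the finite-$n$ prefactor $2^M$ in part (iii). The inequality $\IP\left\{\boldsymbol{T}\in g\left(K\right)\cap O_\epsilon\right\}\leq\exp\left(-n\inf_{g\left(K\right)\cap O_\epsilon}\boldsymbol{I}\right)$ is not a consequence of a single Chernoff bound when $g\left(K\right)\cap O_\epsilon$ is not convex: optimising the exponential tilt over the set gives $\sup_s\inf_t\left\{\left\langle s,t\right\rangle-\Lambda\left(s\right)\right\}$, which is in general strictly smaller than $\inf_t\sup_s\left\{\left\langle s,t\right\rangle-\Lambda\left(s\right)\right\}=\inf_t\boldsymbol{I}\left(t\right)$ (the minimax inequality points the wrong way). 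For instance, within the orthant $t_1\geq\mu_1,t_2\geq\mu_2$ the set $\left\{ t_1\geq a_1,t_2=\mu_2\right\}\cup\left\{ t_1=\mu_1,t_2\geq a_2\right\}$ already requires two one-sided bounds, not one. The paper's route avoids this: for each group it maps $K$ forward under $\vartheta_{N_\lambda}$, uses the monotonicity of $\Lambda_{S_\lambda^2}^{*}$ on either side of its minimum (Lemma \ref{lem:cumulant_entropy}) to reduce the closed image set to two half-lines, and obtains a factor $2$ per group. Either adopt that reduction or restrict your per-orthant bound to the two half-space events $\left\{ T_\lambda\leq\mu_\lambda-\varepsilon_{l,\lambda}\right\}$, $\left\{ T_\lambda\geq\mu_\lambda+\varepsilon_{r,\lambda}\right\}$ before taking the union.
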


One could jump to the conclusion that given these results, the estimation
problem of the parameter $\boldsymbol{\beta}$ using the maximum likelihood
estimator from Definition \ref{def:estimator_fin_N} is solved in
satisfactory fashion. However, there are computational problems with
the calculation of the expectation $\IE_{\hat{\boldsymbol{\beta}}_{\boldsymbol{N}}\left(x\right),\boldsymbol{N}}\,\left(S_{1}^{2},\ldots,S_{M}^{2}\right)$
in (\ref{eq:opt}) for all but small $N_{1},\ldots,N_{M}$. The main
difficulty is the calculation of the normalisation constant $Z_{\boldsymbol{\beta},\boldsymbol{N}}$
in (\ref{eq:part_fn}) which is of order $2^{N_{1}+\cdots+N_{M}}$
as $N_{1}+\cdots+N_{M}\rightarrow\infty$. Two possible solutions
to this problem are:
\begin{enumerate}
\item Find and use an asymptotic approximation of $\IE_{\hat{\boldsymbol{\beta}}_{\boldsymbol{N}}\left(x\right),\boldsymbol{N}}\,\left(S_{1}^{2},\ldots,S_{M}^{2}\right)$
valid for large $N_{1}+\cdots+N_{M}$ which is less costly to calculate
than the exact moment $\IE_{\hat{\boldsymbol{\beta}}_{\boldsymbol{N}}\left(x\right),\boldsymbol{N}}\,\left(S_{1}^{2},\ldots,S_{M}^{2}\right)$.
\item Employ alternate estimators based on small subsets of voters so that
instead of the moment\\
$\IE_{\hat{\boldsymbol{\beta}}_{\boldsymbol{N}}\left(x\right),\boldsymbol{N}}\,\left(S_{1}^{2},\ldots,S_{M}^{2}\right)$
some other expression can be employed which is less costly to calculate.
This approach has the added benefit that we need less data to estimate
$\boldsymbol{\beta}$. Instead of requiring access to a sample of
voting configurations from the entire population, a sample containing
observations of a subset of votes suffices.
\end{enumerate}
We will explore both of these avenues in future work.

\section{\label{sec:Proof_props}Proof of Propositions \ref{prop:beta^hat_well-def}
and \ref{prop:atyp_beta^hat}}

We will analyse the properties of the estimator $\hat{\boldsymbol{\beta}}_{\boldsymbol{N}}$,
with Propositions \ref{prop:ES2_fin} and \ref{prop:atyp_T} being
the key insights for the proof of Propositions \ref{prop:beta^hat_well-def}
and \ref{prop:atyp_beta^hat}. Proposition \ref{prop:beta^hat_well-def}
will follow from Proposition \ref{prop:ES2_fin}, and Proposition
\ref{prop:atyp_beta^hat} from Propositions \ref{prop:ES2_fin} and
\ref{prop:atyp_T}.

First, we will prove Proposition \ref{prop:suff} about the sufficiency
of the statistic $\boldsymbol{T}$:
\begin{proof}
[Proof of Proposition \ref{prop:suff}]We observe that the density
function of the sample distribution given in (\ref{eq:density_fn})
only depends on the observations through the realisation of the statistic
$\boldsymbol{T}$:
\begin{align*}
f\left(x^{(1)},\ldots,x^{(n)};\boldsymbol{\beta}\right) & =Z_{\boldsymbol{\beta},\boldsymbol{N}}^{-n}\,\prod_{t=1}^{n}\exp\left(\frac{1}{2}\sum_{\lambda=1}^{M}\frac{\beta_{\lambda}}{N_{\lambda}}\left(\sum_{i=1}^{N_{\lambda}}x_{\lambda i}^{\left(t\right)}\right)^{2}\right)\\
 & =Z_{\boldsymbol{\beta},\boldsymbol{N}}^{-n}\,\exp\left(\frac{n}{2}\boldsymbol{T}\left(x^{(1)},\ldots,x^{(n)}\right)\left(\begin{array}{c}
\frac{\beta_{1}}{N_{1}}\\
\vdots\\
\frac{\beta_{M}}{N_{M}}
\end{array}\right)\right)\eqcolon g\left(T\left(x^{(1)},\ldots,x^{(n)}\right);\boldsymbol{\beta}\right).
\end{align*}
\end{proof}
Due to the product structure of the CWM measure from Definition \ref{def:CWM},
which features non-interacting groups, we can estimate the coupling
constants $\beta_{\lambda}$ independently of each other. We will
therefore work with the marginal distributions of each group $\lambda$
given by
\[
\IP_{\beta_{\lambda},N_{\lambda}}\left(X_{\lambda1}=x_{\lambda1},\ldots,X_{\lambda N_{\lambda}}=x_{\lambda N_{\lambda}}\right)\coloneq Z_{\beta_{\lambda},N_{\lambda}}^{-1}\,\exp\left(\frac{1}{2}\sum_{\lambda=1}^{M}\frac{\beta_{\lambda}}{N_{\lambda}}\left(\sum_{i=1}^{N_{\lambda}}x_{\lambda i}\right)^{2}\right)
\]
for all $\left(x_{\lambda1},\ldots,x_{\lambda N_{\lambda}}\right)\in\Omega_{N_{\lambda}}$,
where the partition function is
\[
Z_{\beta_{\lambda},N_{\lambda}}=\sum_{x_{\lambda}\in\Omega_{N_{\lambda}}}\exp\left(\frac{\beta_{\lambda}}{2N_{\lambda}}\left(\sum_{i=1}^{N_{\lambda}}x_{\lambda i}\right)^{2}\right).
\]
We will return to the multi-group setting in Section \ref{sec:Optimal-Weights}.
In the meantime, we will be working with a single group at a time,
and therefore there will be no confusion if we omit the subindex $\lambda$
from all our expressions to improve readability. As such, we will
be writing $\IP_{\beta,N}$ instead of $\IP_{\beta_{\lambda},N_{\lambda}}$,
$T$ instead of $\left(\boldsymbol{T}\right)_{\lambda}$, etc.
\begin{defn}
\label{def:Rademacher}The \emph{Rademacher distribution} with parameter
$t\in\left[-1,1\right]$ is a probability measure $P_{t}$ on $\left\{ -1,1\right\} $
given by $P_{t}\left\{ 1\right\} \coloneqq\frac{1+t}{2}$.
\end{defn}

We set $u\coloneq\left(1,\ldots,1\right)\in\Omega_{N}$ and for all
$x\in\Omega_{N}$
\begin{equation}
p_{\beta}\left(x\right)\coloneq\exp\left(\frac{\beta}{2N}\left(\sum_{i=1}^{N}x_{i}\right)^{2}\right).\label{eq:px}
\end{equation}
We will use the following auxiliary results in the proof of Proposition
\ref{prop:ES2_fin}:
\begin{lem}
The limits
\begin{align*}
\lim_{\beta\rightarrow\infty}\frac{p_{\beta}\left(x\right)}{p_{\beta}\left(u\right)} & =\begin{cases}
1 & \textup{if }\left|\sum_{i=1}^{N}x_{i}\right|=N,\\
0 & \textup{otherwise,}
\end{cases}
\end{align*}
hold for all $x\in\Omega_{N}$.
\end{lem}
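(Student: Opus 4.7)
The plan is to reduce the ratio to a single exponential and read off the sign of its exponent, since $p_\beta$ has such a simple product-free form. Concretely, I would first write
\[
\frac{p_{\beta}(x)}{p_{\beta}(u)} = \exp\!\left(\frac{\beta}{2N}\left[\left(\sum_{i=1}^{N}x_{i}\right)^{2} - N^{2}\right]\right),
\]
using that $\sum_{i}u_{i}=N$. This reduces the problem to analysing the sign of the bracketed quantity as a function of $x\in\Omega_{N}$.

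Next I would note the elementary bound $\left|\sum_{i=1}^{N}x_{i}\right|\leq N$ for any $x\in\{-1,1\}^{N}$, with equality if and only if $x=u$ or $x=-u$. Equivalently, $\left(\sum_{i}x_{i}\right)^{2}\leq N^{2}$, with equality exactly on the two unanimous configurations. So the exponent is non-positive, and vanishes precisely when $|\sum_{i}x_{i}|=N$.

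From here the two cases split immediately. If $|\sum_{i}x_{i}|=N$, the exponent is identically zero for every $\beta$, and the ratio equals $1$ for all $\beta>0$, in particular in the limit. Otherwise $\left(\sum_{i}x_{i}\right)^{2}-N^{2}$ is a strictly negative integer (bounded above by $-1$, in fact by $-(2N-1)$ for the next-largest attainable value), so multiplying by $\beta/(2N)\to\infty$ sends the exponent to $-\infty$ and the exponential to $0$.

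There is no real obstacle here; the only thing to be careful about is to invoke the discreteness of the state space so that the gap between $N^{2}$ and the next largest value of $\left(\sum_{i}x_{i}\right)^{2}$ is strictly positive, which is what guarantees that the convergence to $0$ in the second case is uniform in the choice of non-unanimous $x$ (a fact that may later be convenient when the lemma is applied, although the statement itself asks only for pointwise convergence in $x$).
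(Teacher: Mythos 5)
Your proof is correct and follows essentially the same route as the paper's: rewrite the ratio as a single exponential of $\frac{\beta}{2N}\left[\left(\sum_{i}x_{i}\right)^{2}-N^{2}\right]$ and read off the sign of the exponent in the two cases. The additional remarks on the strictly positive spectral gap and uniformity over non-unanimous $x$ are fine (and harmless) but not needed for the pointwise statement.
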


\begin{proof}
First let $\left|\sum_{i=1}^{N}x_{i}\right|=N$. Then we have
\[
\lim_{\beta\rightarrow\infty}\frac{p_{\beta}\left(x\right)}{p_{\beta}\left(u\right)}=\lim_{\beta\rightarrow\infty}\exp\left[\frac{\beta}{2N}\left(\left(\sum_{i=1}^{N}x_{i}\right)^{2}-\left(\sum_{i=1}^{N}u_{i}\right)^{2}\right)\right]=\lim_{\beta\rightarrow\infty}\exp\left[\frac{\beta}{2N}\left(N^{2}-N^{2}\right)\right]=1.
\]
Now let $\left|\sum_{i=1}^{N}x_{i}\right|<N$. Then
\[
\lim_{\beta\rightarrow\infty}\frac{p_{\beta}\left(x\right)}{p_{\beta}\left(u\right)}=\lim_{\beta\rightarrow\infty}\exp\left[\frac{\beta}{2N}\left(\left(\sum_{i=1}^{N}x_{i}\right)^{2}-\left(\sum_{i=1}^{N}u_{i}\right)^{2}\right)\right]=0
\]
because $\left(\sum_{i=1}^{N}x_{i}\right)^{2}<N^{2}=\left(\sum_{i=1}^{N}u_{i}\right)^{2}$.
\end{proof}
The next statement follows directly from the lemma by noting that
$\left|\sum_{i=1}^{N}x_{i}\right|=N$ is equivalent to $x\in\left\{ -u,u\right\} $:
\begin{cor}
\label{cor:lim_beta}The limits
\begin{align*}
\lim_{\beta\rightarrow\infty}\frac{p_{\beta}\left(x\right)}{\sum_{y\in\Omega_{N}}p_{\beta}\left(y\right)} & =\begin{cases}
\frac{1}{2} & \textup{if }\left|\sum_{i=1}^{N}x_{i}\right|=N,\\
0 & \textup{otherwise},
\end{cases}
\end{align*}
hold for all $x\in\Omega_{N}$.
\end{cor}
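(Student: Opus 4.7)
The plan is to reduce the corollary directly to the preceding lemma by a normalisation trick. Since $p_{\beta}(u) > 0$ for every $\beta \in \IR$, I would divide both numerator and denominator of the ratio by $p_{\beta}(u)$, so that
\[
\frac{p_{\beta}(x)}{\sum_{y\in\Omega_{N}} p_{\beta}(y)} \;=\; \frac{p_{\beta}(x)/p_{\beta}(u)}{\sum_{y\in\Omega_{N}} p_{\beta}(y)/p_{\beta}(u)}.
\]
This rewriting is useful because the lemma supplies the limit of every quotient $p_{\beta}(z)/p_{\beta}(u)$ appearing on the right-hand side.

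Next, I would use that $\Omega_{N}$ is a finite set to interchange the limit and the finite sum in the denominator, and then invoke the lemma term by term. Each summand tends to $1$ if $\left|\sum_{i} y_{i}\right| = N$ and to $0$ otherwise. The set $\{y \in \Omega_{N} : \left|\sum_{i} y_{i}\right| = N\}$ equals exactly $\{-u, u\}$, and for these two elements $p_{\beta}(-u) = p_{\beta}(u)$ (directly from definition (\ref{eq:px}), which depends only on the square of $\sum_{i} x_{i}$), so both quotients already equal $1$ for every $\beta$. Hence the denominator converges to $2$.

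Applying the lemma to the numerator as well, it converges to $1$ if $x \in \{-u, u\}$, i.e.\ if $\left|\sum_{i} x_{i}\right| = N$, and to $0$ otherwise. Combining this with the limit of the denominator yields the stated dichotomy: $1/2$ in the first case and $0$ in the second. There is no real obstacle here — the corollary is essentially a bookkeeping consequence of the lemma together with the $x \mapsto -x$ symmetry of $p_{\beta}$; the only point worth stating explicitly is the finiteness of $\Omega_{N}$ that legitimates pulling the limit inside the sum.
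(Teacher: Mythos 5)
Your argument is correct and is exactly the computation the paper leaves implicit: it states that the corollary "follows directly from the lemma by noting that $\left|\sum_{i=1}^{N}x_{i}\right|=N$ is equivalent to $x\in\left\{ -u,u\right\} $," which is precisely your normalisation by $p_{\beta}\left(u\right)$ followed by a term-by-term limit over the finite set $\Omega_{N}$. No differences worth noting.
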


\begin{defn}
\label{def:min}We define the minimum of the range of $S^{2}$ to
be $\kappa\coloneq\min_{x\in\Omega_{N}}\left\{ \left|\sum_{i=1}^{N}x_{i}\right|\right\} $
and the set
\[
\Upsilon\coloneq\left\{ x\in\Omega_{N}\,\left|\,\left|\sum_{i=1}^{N}x_{i}\right|=\kappa\right.\right\} .
\]
\end{defn}

\begin{rem}
Note that
\[
\kappa=\begin{cases}
0 & \textup{if }N\textup{ is even},\\
1 & \textup{otherwise},
\end{cases}
\]
and the cardinality of $\Upsilon$ is
\[
\left|\Upsilon\right|=\begin{cases}
\left(\begin{array}{c}
N\\
\frac{N}{2}
\end{array}\right) & \textup{if }N\textup{ is even},\\
\left(\begin{array}{c}
N\\
\frac{N+1}{2}
\end{array}\right) & \textup{otherwise}.
\end{cases}
\]
\end{rem}

\begin{lem}
Let $y\in\Upsilon$. Then the limits
\begin{align*}
\lim_{\beta\rightarrow-\infty}\frac{p_{\beta}\left(x\right)}{p_{\beta}\left(y\right)} & =\begin{cases}
1 & \textup{if }\left|\sum_{i=1}^{N}x_{i}\right|=\kappa,\\
0 & \textup{otherwise,}
\end{cases}
\end{align*}
hold for all $x\in\Omega_{N}$.
\end{lem}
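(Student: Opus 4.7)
The plan is to mirror the proof of the previous lemma, simply reversing the roles of maximum and minimum and exploiting the sign of $\beta$. Writing the ratio explicitly gives
\[
\frac{p_{\beta}(x)}{p_{\beta}(y)} = \exp\!\left[\frac{\beta}{2N}\left(\Bigl(\sum_{i=1}^{N}x_{i}\Bigr)^{2}-\Bigl(\sum_{i=1}^{N}y_{i}\Bigr)^{2}\right)\right],
\]
and since $y\in\Upsilon$ we have $\bigl(\sum_{i=1}^{N}y_{i}\bigr)^{2}=\kappa^{2}$, so the exponent is $\frac{\beta}{2N}\bigl((\sum x_i)^2-\kappa^2\bigr)$.

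I would split into the two cases dictated by the statement. If $|\sum_{i=1}^{N}x_{i}|=\kappa$, the difference of squares inside the exponent is zero, so the ratio equals $1$ for every $\beta$ and the limit is trivially $1$. If $|\sum_{i=1}^{N}x_{i}|>\kappa$, then $(\sum x_i)^2 - \kappa^2$ is a strictly positive integer (it is at least $\kappa^2+1$ minus $\kappa^2$, but more importantly strictly positive by definition of $\kappa$ as the minimum over $\Omega_N$ of $|\sum x_i|$). Multiplying this strictly positive quantity by $\frac{\beta}{2N}$ and sending $\beta\to-\infty$ drives the exponent to $-\infty$, so the exponential tends to $0$.

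The only subtlety worth a sentence in the proof is to justify that no third case exists: by Definition \ref{def:min}, $\kappa$ is the minimum of $|\sum_{i=1}^{N}x_{i}|$ over $x\in\Omega_{N}$, so every $x\in\Omega_{N}$ satisfies either $|\sum x_i|=\kappa$ (i.e.\ $x\in\Upsilon$) or $|\sum x_i|>\kappa$, and these are the two cases treated above. No real obstacle is anticipated; the lemma is the $\beta\to-\infty$ counterpart of the one just proved, and the argument is a one-line computation once the sign of $\beta$ is accounted for correctly.
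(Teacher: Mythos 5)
Your proof is correct and follows essentially the same route as the paper's: write the ratio as a single exponential, observe that the difference of squares vanishes when $x\in\Upsilon$ and is strictly positive otherwise (by the minimality of $\kappa$), and let $\beta\rightarrow-\infty$. The extra sentence noting that the two cases are exhaustive is a harmless addition the paper leaves implicit.
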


\begin{proof}
Let $x,y\in\Upsilon$. Then we have
\[
\lim_{\beta\rightarrow-\infty}\frac{p_{\beta}\left(x\right)}{p_{\beta}\left(y\right)}=\lim_{\beta\rightarrow-\infty}\exp\left[\frac{\beta}{2N}\left(\left(\sum_{i=1}^{N}x_{i}\right)^{2}-\left(\sum_{i=1}^{N}y_{i}\right)^{2}\right)\right]=\lim_{\beta\rightarrow-\infty}\exp\left[\frac{\beta}{2N}\left(\kappa-\kappa\right)\right]=1.
\]
Now let $x\notin\Upsilon$. Then
\[
\lim_{\beta\rightarrow-\infty}\frac{p_{\beta}\left(x\right)}{p_{\beta}\left(y\right)}=\lim_{\beta\rightarrow-\infty}\exp\left[\frac{\beta}{2N}\left(\left(\sum_{i=1}^{N}x_{i}\right)^{2}-\left(\sum_{i=1}^{N}y_{i}\right)^{2}\right)\right]=0
\]
because $\left(\sum_{i=1}^{N}x_{i}\right)^{2}-\left(\sum_{i=1}^{N}y_{i}\right)^{2}>0$.
\end{proof}
The next corollary follows immediately from the last lemma:
\begin{cor}
\label{cor:lim_beta_neg}The limits
\begin{align*}
\lim_{\beta\rightarrow-\infty}\frac{p_{\beta}\left(x\right)}{\sum_{y\in\Omega_{N}}p_{\beta}\left(y\right)} & =\begin{cases}
\frac{1}{\left|\Upsilon\right|} & \textup{if }\left|\sum_{i=1}^{N}x_{i}\right|=\kappa,\\
0 & \textup{otherwise},
\end{cases}
\end{align*}
hold for all $x\in\Omega_{N}$.
\end{cor}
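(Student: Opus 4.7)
My plan is to reduce the corollary to the preceding lemma by normalising both numerator and denominator against $p_{\beta}(y)$ for a fixed reference configuration $y \in \Upsilon$ (which exists because $\Upsilon$ is nonempty by Definition \ref{def:min}). Writing
\[
\frac{p_{\beta}(x)}{\sum_{z \in \Omega_{N}} p_{\beta}(z)} = \frac{p_{\beta}(x)/p_{\beta}(y)}{\sum_{z \in \Omega_{N}} p_{\beta}(z)/p_{\beta}(y)},
\]
the point is that both numerator and denominator are now expressed in terms of quantities whose $\beta \to -\infty$ limits are supplied by the lemma immediately preceding the corollary.

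Next, I would apply the lemma termwise. For the numerator, it gives $1$ if $x \in \Upsilon$ and $0$ otherwise. For the denominator, since the sum over $\Omega_{N}$ is finite, I can exchange the limit with the sum, and the lemma tells me that exactly the $|\Upsilon|$ terms corresponding to $z \in \Upsilon$ contribute $1$, while all others contribute $0$. Hence the denominator converges to $|\Upsilon|$.

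Putting the two together, the ratio converges to $1/|\Upsilon|$ when $x \in \Upsilon$ (equivalently $\bigl|\sum_{i=1}^{N} x_{i}\bigr| = \kappa$) and to $0$ otherwise. There is no real obstacle here: the only thing that requires a sentence of justification is the exchange of limit and finite sum in the denominator, which is immediate because $\Omega_{N}$ has $2^{N}$ elements. The rest is simply bookkeeping to match the two cases of the lemma to the two cases in the corollary's statement.
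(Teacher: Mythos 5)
Your proposal is correct and is exactly the argument the paper intends: the paper states only that the corollary ``follows immediately from the last lemma,'' and the standard way to make that precise is precisely your normalisation by $p_{\beta}\left(y\right)$ for a fixed $y\in\Upsilon$ followed by a termwise application of the lemma to the finite sum in the denominator. The one point you single out for justification, exchanging the limit with the finite sum over $\Omega_{N}$, is indeed the only step needing comment, and your treatment of it is fine.
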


\begin{lem}
\label{lem:ES2_outside}The following statements hold:
\begin{enumerate}
\item $\lim_{\beta\rightarrow-\infty}\IE_{\beta,N}S^{2}=\kappa$.
\item $\lim_{\beta\rightarrow\infty}\IE_{\beta,N}S^{2}=N^{2}$.
\end{enumerate}
\end{lem}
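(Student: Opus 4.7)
The plan is to leverage Corollaries \ref{cor:lim_beta} and \ref{cor:lim_beta_neg} directly, since these already identify the pointwise limiting probability weights under $\IP_{\beta,N}$ as $\beta\to\pm\infty$. I would begin by writing the expectation out explicitly as
\[
\IE_{\beta,N}S^{2}=\sum_{x\in\Omega_{N}}\Bigl(\sum_{i=1}^{N}x_{i}\Bigr)^{2}\,\frac{p_{\beta}(x)}{\sum_{y\in\Omega_{N}}p_{\beta}(y)}.
\]
Because $\Omega_{N}$ is finite, the limit in $\beta$ commutes with the sum, so it suffices to plug in the pointwise limits.

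For part (2), Corollary \ref{cor:lim_beta} tells us that the weights concentrate as $\beta\to\infty$ on the two unanimous configurations $\{-u,u\}$, each with mass $\tfrac{1}{2}$. Since $\bigl(\sum_{i}u_{i}\bigr)^{2}=\bigl(\sum_{i}(-u_{i})\bigr)^{2}=N^{2}$, the limit is $\tfrac{1}{2}N^{2}+\tfrac{1}{2}N^{2}=N^{2}$.

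For part (1), Corollary \ref{cor:lim_beta_neg} says that as $\beta\to-\infty$ the weights become uniform on $\Upsilon$, each with mass $1/|\Upsilon|$, and vanish off $\Upsilon$. For every $x\in\Upsilon$, by Definition \ref{def:min} we have $\bigl(\sum_{i}x_{i}\bigr)^{2}=\kappa^{2}$, so interchanging limit and sum yields
\[
\lim_{\beta\rightarrow-\infty}\IE_{\beta,N}S^{2}=\sum_{x\in\Upsilon}\kappa^{2}\cdot\frac{1}{|\Upsilon|}=\kappa^{2}.
\]
The only subtle point, and the one place a careful reader might pause, is the final identification $\kappa^{2}=\kappa$: by the remark following Definition \ref{def:min}, $\kappa\in\{0,1\}$ depending on the parity of $N$, so squaring is the identity on $\{0,1\}$ and the stated value $\kappa$ is recovered. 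There is no genuine analytic obstacle; the entire argument reduces to the legal interchange of a limit with a finite sum together with the two previously established corollaries.
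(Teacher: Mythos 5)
Your proof is correct and follows essentially the same route as the paper's: write $\IE_{\beta,N}S^{2}$ as a finite sum over $\Omega_{N}$, interchange the limit with the sum, and apply Corollaries \ref{cor:lim_beta} and \ref{cor:lim_beta_neg}. Your explicit observation that $s(x)^{2}=\kappa^{2}=\kappa$ because $\kappa\in\{0,1\}$ is in fact slightly more careful than the paper, which writes $s(x)^{2}=\kappa$ directly at that step.
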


We will write
\[
s\left(x\right)\coloneq\sum_{i=1}^{N}x_{i}
\]
for all $x\in\Omega_{N}$.
\begin{proof}
We show each statement in turn.
\begin{enumerate}
\item We start by proving the statement
\[
\lim_{\beta\rightarrow-\infty}\IE_{\beta,N}S^{2}=\kappa.
\]
The moment $\IE_{\beta,N}S^{2}$ is calculated as the sum of $2^{N}$
summands of the type
\[
Z_{\beta,N}^{-1}\,s\left(x\right)^{2}\exp\left(\frac{\beta}{2N}s\left(x\right)^{2}\right),\quad x\in\Omega_{N}.
\]
Since the number of summands is fixed over all values of $\beta\in\IR$,
we have
\[
\lim_{\beta\rightarrow-\infty}\IE_{\beta,N}S^{2}=\sum_{x\in\Omega_{N}}s\left(x\right)^{2}\lim_{\beta\rightarrow-\infty}\frac{\exp\left(\frac{\beta}{2N}s\left(x\right)^{2}\right)}{Z_{\beta,N}}=\sum_{x\in\Omega_{N}}s\left(x\right)^{2}\lim_{\beta\rightarrow-\infty}\frac{p_{\beta}\left(x\right)}{\sum_{y\in\Omega}p_{\beta}\left(y\right)},
\]
where $p_{\beta}\left(x\right)$ is defined as in (\ref{eq:px}).
By Definition \ref{def:min} of $\kappa$ and $\Upsilon$, for all
$x\in\Upsilon$,
\[
s\left(x\right)^{2}=\kappa.
\]
 According to Corollary \ref{cor:lim_beta_neg}, we obtain
\[
\sum_{x\in\Omega_{N}}s\left(x\right)^{2}\lim_{\beta\rightarrow-\infty}\frac{p_{\beta}\left(x\right)}{\sum_{y\in\Omega_{N}}p_{\beta}\left(y\right)}=\sum_{x\in\Upsilon}s\left(x\right)^{2}\frac{1}{\left|\Upsilon\right|}=\kappa.
\]
\item We next show the second statement employing Corollary \ref{cor:lim_beta},
$s\left(u\right)^{2}=s\left(-u\right)^{2}=N^{2}$, and the fact that
there are finitely many configurations $x\in\Omega_{N}$:
\begin{align*}
\lim_{\beta\rightarrow\infty}\IE_{\beta,N}S^{2} & =\lim_{\beta\rightarrow\infty}\frac{\sum_{x\in\Omega_{N}}p_{\beta}\left(x\right)s(x)^{2}}{\sum_{y\in\Omega_{N}}p_{\beta}\left(y\right)}=\lim_{\beta\rightarrow\infty}\frac{p_{\beta}\left(u\right)}{p_{\beta}\left(u\right)+p_{\beta}\left(-u\right)}s(u)^{2}+\lim_{\beta\rightarrow\infty}\frac{p_{\beta}\left(-u\right)}{p_{\beta}\left(u\right)+p_{\beta}\left(-u\right)}s(-u)^{2}\\
 & =\frac{1}{2}s(u)^{2}+\frac{1}{2}s(-u)^{2}=N^{2}.
\end{align*}
\end{enumerate}
\end{proof}
As we will use the function $\beta\in\IR\mapsto\IE_{\beta,N}S^{2}\in\IR$
in our proofs later on, it is convenient to assign a letter to this
function.
\begin{defn}
\label{def:expec_S2}Let for each $N\in\IN$ the function $\vartheta_{N}:\left[-\infty,\infty\right]\rightarrow\IR$
be defined by
\[
\vartheta_{N}\left(\beta\right)\coloneq\IE_{\beta,N}S^{2},\;\beta\in\IR,\qquad\vartheta_{N}\left(-\infty\right)\coloneq\kappa,\qquad\textup{and}\;\vartheta_{N}\left(\infty\right)\coloneq N^{2}.
\]
\end{defn}

\begin{prop}
\label{prop:ES2_fin}The function $\vartheta_{N}$ has the following
properties:
\begin{enumerate}
\item $\vartheta_{N}$ is strictly increasing and continuous; it is continuously
differentiable on $\IR$. Its derivative is
\[
\vartheta'_{N}\left(\beta\right)=\frac{1}{2N}\IV_{\beta,N}S^{2},\quad\beta\in\IR.
\]
\item We have for all $\beta\in\left(-\infty,0\right)$ and all $N\in\IN$:
\[
\kappa<\IE_{\beta,N}S^{2}<N.
\]
\item We have for all $\beta\in\left[0,\infty\right)$ and all $N\in\IN$:
\[
N\leq\IE_{\beta,N}S^{2}<N^{2}.
\]
\end{enumerate}
\end{prop}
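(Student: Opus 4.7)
The plan is to compute $\vartheta_N'$ directly, extract strict monotonicity from positivity of the variance of $S^2$, and then deduce parts (2) and (3) by combining monotonicity with the boundary values $\kappa$ and $N^2$ from Lemma \ref{lem:ES2_outside} together with the single interior value $\vartheta_N(0) = N$.

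First I would differentiate $\vartheta_N(\beta) = Z_{\beta,N}^{-1}\sum_{x\in\Omega_N} s(x)^2 \exp\!\bigl(\tfrac{\beta}{2N}s(x)^2\bigr)$ term by term; because the sum is finite there are no convergence issues, and the quotient rule yields
\[
\vartheta_N'(\beta) \;=\; \frac{1}{2N}\bigl[\IE_{\beta,N} S^4 - (\IE_{\beta,N} S^2)^2\bigr] \;=\; \frac{1}{2N}\IV_{\beta,N} S^2,
\]
which is continuous in $\beta$, giving continuous differentiability on $\IR$. Strict monotonicity then reduces to showing $\IV_{\beta,N} S^2 > 0$ for every $\beta \in \IR$, which holds because $S^2$ assumes the value $N^2$ on $\{u,-u\}$ and the value $\kappa < N^2$ on each $x \in \Upsilon$, and both events have strictly positive probability under $\IP_{\beta,N}$. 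Continuity on the full compactification $[-\infty,\infty]$ then follows by combining differentiability on $\IR$ with the limits $\vartheta_N(-\infty) = \kappa$ and $\vartheta_N(\infty) = N^2$ provided by Lemma \ref{lem:ES2_outside} and Definition \ref{def:expec_S2}.

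Next I would compute $\vartheta_N(0)$. At $\beta = 0$ the exponential weight in (\ref{eq:px}) is constant, so $\IP_{0,N}$ is the uniform measure on $\Omega_N$ and the coordinates $X_i$ are i.i.d.\ Rademacher$(0)$. Independence together with $\IE X_i = 0$ and $\IE X_i^2 = 1$ yields
\[
\IE_{0,N} S^2 \;=\; \sum_{i=1}^N \IE X_i^2 + 2\sum_{i<j}\IE X_i\,\IE X_j \;=\; N.
\]
Strict monotonicity of $\vartheta_N$ combined with the three explicit values $\vartheta_N(-\infty) = \kappa$, $\vartheta_N(0) = N$, $\vartheta_N(\infty) = N^2$ then gives both remaining parts at once: for $\beta \in (-\infty, 0)$ one has $\kappa < \vartheta_N(\beta) < N$, while for $\beta \in [0,\infty)$ one has $N \leq \vartheta_N(\beta) < N^2$.

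I do not foresee any real obstacle. The only mildly delicate point is justifying the strict (as opposed to weak) inequality $\IV_{\beta,N} S^2 > 0$ uniformly in $\beta$, and this is settled once and for all by exhibiting two configurations whose values of $S^2$ differ and whose weights are both strictly positive. Once strict monotonicity is in hand, parts (2) and (3) are pure interpolation between the three explicitly computed values of $\vartheta_N$.
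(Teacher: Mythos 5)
Your proposal is correct and follows essentially the same route as the paper: quotient-rule differentiation giving $\vartheta_N'(\beta)=\frac{1}{2N}\IV_{\beta,N}S^{2}>0$, the computation $\IE_{0,N}S^{2}=N$ via independence at $\beta=0$, and interpolation between the values $\kappa$, $N$, $N^{2}$ using strict monotonicity and Lemma \ref{lem:ES2_outside}. Your justification of $\IV_{\beta,N}S^{2}>0$ by exhibiting two configurations with distinct values of $S^{2}$ and positive weight is just a more explicit version of the paper's observation that $S^{2}$ is not almost surely constant (both arguments tacitly assume $N\geq2$).
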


\begin{proof}
\begin{enumerate}
\item The continuity on $\IR$ is clear. The function $\vartheta_{N}$ is
a sum of differentiable functions of $\beta\in\IR$, and we calculate
\begin{align*}
\frac{\textup{d}\IE_{\beta,N}S^{2}}{\textup{d}\beta} & =\frac{\textup{d}}{\textup{d}\beta}\left[\sum_{x\in\Omega_{N}}\left(\sum_{i=1}^{N}x_{i}\right)^{2}Z_{\beta,N}^{-1}\exp\left(\frac{\beta}{2N}\left(\sum_{i=1}^{N}x_{i}\right)^{2}\right)\right]\\
 & =\sum_{x\in\Omega_{N}}\left[\frac{\frac{1}{2N}\left(\sum_{i=1}^{N}x_{i}\right)^{4}\exp\left(\frac{\beta}{2N}\left(\sum_{i=1}^{N}x_{i}\right)^{2}\right)Z_{\beta,N}}{Z_{\beta,N}^{2}}\right.\\
 & \qquad-\left.\frac{\left(\sum_{i=1}^{N}x_{i}\right)^{2}\exp\left(\frac{\beta}{2N}\left(\sum_{i=1}^{N}x_{i}\right)^{2}\right)\frac{\textup{d}Z_{\beta,N}}{\textup{d}\beta}}{Z_{\beta,N}^{2}}\right]\\
 & =\frac{1}{2N}\IE_{\beta,N}S^{4}-Z_{\beta,N}^{-2}\sum_{x\in\Omega_{N}}\left(\sum_{i=1}^{N}x_{i}\right)^{2}\exp\left(\frac{\beta}{2N}\left(\sum_{i=1}^{N}x_{i}\right)^{2}\right)\\
 & \quad\cdot\sum_{y\in\Omega_{N}}\frac{1}{2N}\left(\sum_{i=1}^{N}y_{i}\right)^{2}\exp\left(\frac{\beta}{2N}\left(\sum_{i=1}^{N}y_{i}\right)^{2}\right)\\
 & =\frac{1}{2N}\IE_{\beta,N}S^{4}-\frac{1}{2N}\sum_{x\in\Omega_{N}}\left(\sum_{i=1}^{N}x_{i}\right)^{2}Z_{\beta,N}^{-1}\exp\left(\frac{\beta}{2N}\left(\sum_{i=1}^{N}x_{i}\right)^{2}\right)\\
 & \quad\cdot\sum_{y\in\Omega_{N}}\left(\sum_{i=1}^{N}y_{i}\right)^{2}Z_{\beta,N}^{-1}\exp\left(\frac{\beta}{2N}\left(\sum_{i=1}^{N}y_{i}\right)^{2}\right)\\
 & =\frac{1}{2N}\IE_{\beta,N}S^{4}-\frac{1}{2N}\left(\IE_{\beta,N}S^{2}\right)^{2}=\frac{1}{2N}\IV_{\beta,N}S^{2}>0,
\end{align*}
where we used the derivative of $Z_{\beta,N}$ provided in Lemma \ref{lem:Z_deriv},
and the strict inequality stems from the fact that the random variable
$S^{2}$ is not almost surely constant for any $\beta\in\IR$. Thus,
$\vartheta_{N}|\IR$ is continuously differentiable and strictly increasing.
The continuity of $\vartheta_{N}$ follows from the limit statements
in Lemma \ref{lem:ES2_outside}. Lemma \ref{lem:ES2_outside} and
the inequalities in points 2 and 3, which we will show now, yield
the strict monotonicity of $\vartheta_{N}$ on its entire domain.
\item For $\beta=0$, the $X_{i}$, $i\in\IN_{N}$, are independent Rademacher
random variables (see Definition \ref{def:Rademacher}) with parameter
$t=0$. We calculate the second moment of $S$:
\begin{align*}
\IE_{0,N}S^{2} & =\IE_{0,N}\left(\sum_{i=1}^{N}X_{i}\right)^{2}=\IE_{0,N}\left(\sum_{i=1}^{N}X_{i}^{2}+\sum_{i\neq j}X_{i}X_{j}\right)\\
 & =\sum_{i=1}^{N}\IE_{0,N}X_{i}^{2}+\sum_{i\neq j}\IE_{0,N}\left(X_{i}X_{j}\right)\\
 & =\sum_{i=1}^{N}1+\sum_{i\neq j}\IE_{0,N}X_{i}\,\IE_{0,N}X_{j}\\
 & =N,
\end{align*}
where we used $\IE_{0,N}X_{i}=0$, $X_{i}^{2}=1$ and the independence
of all $X_{i},X_{j}$ with $i\neq j$ under $\IP_{0,N}$.\\
Next, we use $S^{2}\geq\kappa$, $\IE_{0,N}S^{2}=N$, and the strict
monotonicity of the function $\vartheta_{N}|\IR$ to establish that
for all $\beta\in\left(-\infty,0\right)$,
\[
\kappa\leq\lim_{\beta'\rightarrow-\infty}\IE_{\beta',N}S^{2}<\IE_{\beta,N}S^{2}<\IE_{0,N}S^{2}=N.
\]
This proves the second statement.
\item Using the first property and $\lim_{\beta\rightarrow\infty}\IE_{\beta,N}S^{2}=N^{2}$
from Lemma \ref{lem:ES2_outside}, we have for all $N\in\IN$ and
all $\beta\geq0$,
\[
N=\IE_{0,N}S^{2}\leq\IE_{\beta,N}S^{2}<\lim_{\beta'\rightarrow\infty}\IE_{\beta',N}S^{2}=N^{2}.
\]
\end{enumerate}
\end{proof}
Recall that we convened to write $T=\left(\boldsymbol{T}\right)_{\lambda}$.
Proposition \ref{prop:ES2_fin} assures us that any realisation of
the statistic $T$ in the interval $\left[N,N^{2}\right)$ allows
us to identify a unique value of $\hat{\beta}_{N}\in\left[0,\infty\right)$
such that the optimality condition (\ref{eq:opt}) holds. It should
be noted that realisations $T\left(x\right)$ in $\left[\kappa,N\right)\cup\left\{ N^{2}\right\} $
are possible since the range of $S^{2}$ is $\left\{ \kappa,\left(\kappa+2\right)^{2},\ldots,N^{2}\right\} $.
As $T$ is defined to be the average of the realisations of $S^{2}$
over all observations $x^{\left(t\right)}$ in the sample $x\in\Omega_{N}^{n}$,
this implies that the range of $T$ includes $N^{2}$ and a subset
of $\left[\kappa,N\right)$.

Taking into account that $\IE\,T=\IE_{\beta,N}S^{2}$, we obtain the
following realisations of $\hat{\beta}_{N}$ depending on the value
of $T$:
\begin{enumerate}
\item If $T\left(x\right)\in\left(\kappa,N\right)$, then a negative value
for $\hat{\beta}_{N}$ satisfies (\ref{eq:opt}), and for $T\left(x\right)=\kappa$,
(\ref{eq:opt}) holds if $\hat{\beta}_{N}=-\infty$.
\item If $T\left(x\right)\in\left[N,N^{2}\right)$, then (\ref{eq:opt})
is satisfied for a unique value $\hat{\beta}_{N}\in\left[0,\infty\right)$.
\item If $T\left(x\right)=N^{2}$, then (\ref{eq:opt}) holds if $\hat{\beta}_{N}=\infty$
but not for any finite value of $\hat{\beta}_{N}$.
\end{enumerate}
These observations are the reason we defined $\hat{\beta}_{N}$ as
a function which takes values in the extended real numbers, including
$-\infty$ and $\infty$. This concludes the proof of Proposition
\ref{prop:beta^hat_well-def}.

We are interested in the case that the coupling constant $\beta$
lies in $\left[0,\infty\right)$, as this models social cohesion.
If $\beta>0$, the edge case $T\left(x\right)\in\left[\kappa,N\right)\cup\left\{ N^{2}\right\} $
is of little practical importance due to Proposition \ref{prop:atyp_T}
below. We will use some concepts and results in the proof of said
proposition, which we state before the proposition itself.

We will work with rate functions which are defined as Legendre transforms.
\begin{defn}
\label{def:Legendre}Let $f:\IR\rightarrow\IR$ be a convex function.
Then the Legendre transform $f^{*}:\IR\rightarrow\IR\cup\{\infty\}$
of $f$ is defined by $f^{*}(t)\coloneq\sup_{x\in\IR}\left\{ xt-f(x)\right\} ,t\in\IR$.
\end{defn}

We will employ two lemmas concerning the convexity of the Legendre
transformation of convex functions to prove Proposition \ref{prop:atyp_T}
below.
\begin{lem}
\label{lem:conv}Let $f:\IR\rightarrow\IR$ be a convex function.
Then the Legendre transform $f^{*}$ is convex.
\end{lem}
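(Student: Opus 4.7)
The plan is to exploit the fact that $f^{*}$ is by definition a pointwise supremum of functions that are affine (hence convex) in the argument $t$, and that pointwise suprema of convex functions are convex. Note in particular that the convexity hypothesis on $f$ is not actually used; the Legendre transform of an arbitrary function $f:\IR\rightarrow\IR$ is automatically convex (possibly taking the value $+\infty$ at some points), which is all that the codomain $\IR\cup\{\infty\}$ in Definition \ref{def:Legendre} allows for.

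Concretely, I would fix $t_{1},t_{2}\in\IR$ and $\mu\in[0,1]$, and show
\[
f^{*}(\mu t_{1}+(1-\mu)t_{2})\;\leq\;\mu f^{*}(t_{1})+(1-\mu)f^{*}(t_{2}).
\]
For each fixed $x\in\IR$, I would write
\[
x\bigl(\mu t_{1}+(1-\mu)t_{2}\bigr)-f(x)=\mu\bigl(xt_{1}-f(x)\bigr)+(1-\mu)\bigl(xt_{2}-f(x)\bigr),
\]
and then bound each bracket on the right by the corresponding supremum, obtaining $xt_{i}-f(x)\leq f^{*}(t_{i})$ for $i\in\{1,2\}$. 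Taking the supremum over $x\in\IR$ on the left-hand side yields the desired convexity inequality.

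The only mild subtlety — not really an obstacle — is handling the possibility that $f^{*}(t_{1})$ or $f^{*}(t_{2})$ equals $+\infty$, in which case the right-hand side of the convexity inequality is $+\infty$ (with the convention $0\cdot\infty=0$ when $\mu\in\{0,1\}$) and the inequality is trivial. In every other case the arithmetic above is ordinary real arithmetic and goes through without modification.
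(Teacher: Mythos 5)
Your argument is correct and is essentially the same as the paper's: both establish $f^{*}(\mu t_{1}+(1-\mu)t_{2})\leq\mu f^{*}(t_{1})+(1-\mu)f^{*}(t_{2})$ by splitting $x(\mu t_{1}+(1-\mu)t_{2})-f(x)$ into the convex combination of $xt_{1}-f(x)$ and $xt_{2}-f(x)$ and bounding by the respective suprema (the paper phrases this as $\sup g+\sup h\geq\sup(g+h)$, which is the same manipulation read in the other direction). Your observations that the convexity of $f$ is never used and that the possible value $+\infty$ causes no trouble are both accurate refinements the paper leaves implicit.
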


\begin{proof}
Let $\theta\in\left[0,1\right]$ and $t_{1},t_{2}\in\IR$. Then
\begin{align*}
\theta f^{*}\left(t_{1}\right)+\left(1-\theta\right)f^{*}\left(t_{2}\right) & =\theta\sup_{x\in\IR}\left\{ t_{1}x-f(x)\right\} +\left(1-\theta\right)\sup_{x\in\IR}\left\{ t_{2}x-f(x)\right\} \\
 & =\sup_{x\in\IR}\left\{ \theta t_{1}x-\theta f(x)\right\} +\sup_{x\in\IR}\left\{ \left(1-\theta\right)t_{2}x-\left(1-\theta\right)f(x)\right\} \\
 & \geq\sup_{x\in\IR}\left\{ \left(\theta t_{1}+\left(1-\theta\right)t_{2}\right)x-\left(\theta+\left(1-\theta\right)\right)f(x)\right\} \\
 & =f^{*}\left(\theta t_{1}+\left(1-\theta\right)t_{2}\right),
\end{align*}
where we used first the definition of $f^{*}$, then $\theta\in\left[0,1\right]$,
and finally $\sup_{x\in\IR}g(x)+\sup_{x\in\IR}h(x)\geq$\\
$\sup_{x\in\IR}\left\{ g(x)+h(x)\right\} $ for any functions $g,h:\IR\rightarrow\IR$.
\end{proof}
\begin{lem}
\label{lem:strict_conv}Let $f:\IR\rightarrow\IR$ be a convex and
differentiable function. Then the Legendre transform $f^{*}$ is strictly
convex on the set $\left\{ t\in\IR\,|\,f^{*}\left(t\right)<\infty\right\} $.
\end{lem}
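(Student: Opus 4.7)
The plan is to argue by contradiction, leveraging Lemma \ref{lem:conv} (which already gives convexity of $f^*$) to reduce the task to ruling out equality in the convexity inequality. First I would fix $t_1 \neq t_2$ with $f^*(t_1), f^*(t_2) < \infty$ and $\theta \in (0,1)$, set $t_\theta \coloneq \theta t_1 + (1-\theta) t_2$, and suppose for contradiction that $f^*(t_\theta) = \theta f^*(t_1) + (1-\theta) f^*(t_2)$. By Lemma \ref{lem:conv} we have $f^*(t_\theta) < \infty$, so I can choose a sequence $x_n \in \IR$ with $t_\theta x_n - f(x_n) \to f^*(t_\theta)$.

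The next step exploits the decomposition $t_\theta x_n - f(x_n) = \theta(t_1 x_n - f(x_n)) + (1-\theta)(t_2 x_n - f(x_n))$. Since each $t_i x_n - f(x_n) \leq f^*(t_i)$, the two non-negative quantities $f^*(t_i) - (t_i x_n - f(x_n))$ have a $\theta,(1-\theta)$-weighted sum that tends to $0$ by the equality assumption, and so each individually tends to $0$. Subtracting the resulting two limits $t_i x_n - f(x_n) \to f^*(t_i)$ gives $(t_1 - t_2) x_n \to f^*(t_1) - f^*(t_2)$, and since $t_1 \neq t_2$ this forces $x_n \to x^* \coloneq (f^*(t_1) - f^*(t_2))/(t_1 - t_2) \in \IR$.

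Finally, because $f$ is convex on all of $\IR$ it is continuous, so passing to the limit in $t_i x_n - f(x_n) \to f^*(t_i)$ yields $t_i x^* - f(x^*) = f^*(t_i)$ for $i = 1, 2$, meaning the single point $x^*$ realises both suprema. Differentiability of $f$ then gives the first-order conditions $t_i = f'(x^*)$ for $i = 1, 2$, forcing $t_1 = t_2$ and contradicting the initial choice. The main obstacle to overcome is that the suprema defining $f^*(t_1)$ and $f^*(t_2)$ need not a priori be attained; the device that handles this is to work with the single maximising sequence chosen for $f^*(t_\theta)$ and show that, under the equality assumption, it is automatically forced to converge in $\IR$ and to simultaneously realise the other two suprema, after which differentiability closes the argument.
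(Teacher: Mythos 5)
Your proof is correct, and it follows the same overall strategy as the paper's: assume equality in the convexity inequality for $f^{*}$, produce a single point that simultaneously maximises $x\mapsto t_{1}x-f(x)$ and $x\mapsto t_{2}x-f(x)$, and then use differentiability of $f$ to force $t_{1}=t_{2}$ (your first-order condition $t_{i}=f'\left(x^{*}\right)$ is the same fact the paper expresses as two distinct affine tangents to $f$ at one point). The genuine difference lies in how the common maximiser is obtained, and here your route is actually more careful. The paper first reduces to the midpoint case via continuity of $f^{*}$ and then simply declares ``let $z$ be a maximiser of $\left\{ x\frac{t_{1}+t_{2}}{2}-f(x)\,|\,x\in\IR\right\} $'' without justifying that this supremum is attained (it need not be for an arbitrary point of $\left\{ t\,|\,f^{*}\left(t\right)<\infty\right\} $; e.g.\ for $f(x)=\sqrt{1+x^{2}}$ and $t=1$ the supremum is finite but not achieved). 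Your maximising-sequence argument closes exactly this gap: under the equality assumption the sequence $\left(x_{n}\right)$ chosen for $f^{*}\left(t_{\theta}\right)$ is forced to converge, because $\left(t_{1}-t_{2}\right)x_{n}$ converges, and its limit $x^{*}$ attains all three suprema by continuity of $f$. You also work with a general $\theta\in\left(0,1\right)$ rather than reducing to midpoint convexity, which removes the appeal to the equivalence of midpoint convexity and convexity for continuous functions. In short: same skeleton, but your version supplies the existence of the maximiser that the paper's proof tacitly assumes, at the modest cost of a slightly longer limiting argument.
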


\begin{proof}
We already know from Lemma \ref{lem:conv} that $f^{*}$ is convex.
Assume it is not strictly convex on $F\coloneq\left\{ t\in\IR\,|\,f^{*}\left(t\right)<\infty\right\} $.
Since $f$ is assumed to be differentiable, and hence continuous,
this implies the existence of $t_{1},t_{2}\in F,t_{1}\neq t_{2}$,
such that
\begin{equation}
f^{*}\left(\frac{t_{1}+t_{2}}{2}\right)=\frac{1}{2}f^{*}\left(t_{1}\right)+\frac{1}{2}f^{*}\left(t_{2}\right).\label{eq:midpoint}
\end{equation}
See \cite[p. 12]{Donoghue1969} for a reference that for continuous
real-valued functions defined on open intervals midpoint convexity
is equivalent to convexity.

Let $z$ be a maximiser of the set $\left\{ \left.x\frac{t_{1}+t_{2}}{2}-f(x)\,\right|\,x\in\IR\right\} $.
Then the inequality
\[
f^{*}\left(\frac{t_{1}+t_{2}}{2}\right)=z\frac{t_{1}+t_{2}}{2}-f(z)\geq x\frac{t_{1}+t_{2}}{2}-f(x),\quad x\in\IR,
\]
holds. By (\ref{eq:midpoint}),
\begin{align*}
\frac{1}{2}f^{*}\left(t_{1}\right)+\frac{1}{2}f^{*}\left(t_{2}\right) & =f^{*}\left(\frac{t_{1}+t_{2}}{2}\right)=z\frac{t_{1}+t_{2}}{2}-f(z)\\
 & =\frac{1}{2}\left(zt_{1}-f(z)\right)+\frac{1}{2}\left(zt_{2}-f(z)\right)\\
 & \leq\frac{1}{2}f^{*}\left(t_{1}\right)+\frac{1}{2}f^{*}\left(t_{2}\right),
\end{align*}
and hence $z$ is also a maximiser of the sets $\left\{ \left.xt_{i}-f(x)\,\right|\,x\in\IR\right\} $,
$i=1,2$, i.e.\!
\[
zt_{i}-f(z)\geq xt_{i}-f(x),\quad x\in\IR,\;i=1,2.
\]
By rearranging terms, we obtain
\[
f(x)\geq xt_{i}-zt_{i}+f(z),\quad x\in\IR,\;i=1,2.
\]
Thus we have two affine functions $x\mapsto g_{i}(x)\coloneq xt_{i}-zt_{i}+f(z),i=1,2$,
with the properties
\[
g_{1}\neq g_{2},\quad g_{i}\leq f,\quad g_{i}(z)=f(z),\quad i=1,2.
\]
Hence each $g_{i}$ is tangent to $f$ at $z$. However, due to the
assumed differentiability of $f$ and $g_{1}\neq g_{2}$ this is impossible,
yielding a contradiction. Therefore, (\ref{eq:midpoint}) cannot hold.
\end{proof}
\begin{defn}
\label{def:cumul_entropy}Let $P$ be a probability measure on $\IR$.
Then let $\Lambda_{P}\left(t\right)\coloneq\ln\,\int_{\IR}\exp\left(tx\right)P\left(\textup{d}x\right)$
for all $t\in\IR$ such that the expression is finite. We call $\Lambda_{P}$
the cumulant generating function of $P$ and the Legendre transform
$\Lambda_{P}^{*}$ its entropy function. Let $Y$ be a real random
variable with distribution $P$. We will then say that $\Lambda_{Y}\coloneq\Lambda_{P}$
and $\Lambda_{Y}^{*}\coloneq\Lambda_{P}^{*}$ are the cumulant generating
and entropy function of $Y$, respectively.
\end{defn}

As a last ingredient we will need in the proof of Proposition \ref{prop:atyp_T},
we prove some properties of $\Lambda_{S^{2}}$ and $\Lambda_{S^{2}}^{*}$.
\begin{defn}
\label{def:ess}Let $Y$ be a random variable. The value
\[
\textup{ess\,inf }Y\coloneq\sup\left\{ a\in\IR\,|\,\IP\left\{ Y<a\right\} =0\right\} 
\]
is called the essential infimum of $Y$. We convene that $\textup{ess\,inf }Y\coloneq-\infty$
if the set of essential lower bounds for $Y$ on the right hand side
of the display above is empty. The value 
\[
\textup{ess\,sup }Y\coloneq\inf\left\{ a\in\IR\,|\,\IP\left\{ Y>a\right\} =0\right\} 
\]
is called the essential supremum of $Y$. We convene that $\textup{ess\,sup }Y\coloneq\infty$
if the set on the right hand side above is empty.
\end{defn}

\begin{lem}
\label{lem:cumulant_entropy}Let $Y$ be a bounded random variable
which is not almost surely constant. The cumulant generating function
$\Lambda_{Y}$ of $Y$ and the entropy function $\Lambda_{Y}^{*}$
of $Y$ have the following properties:
\begin{enumerate}
\item $\Lambda_{Y}$ is convex and differentiable.
\item $\Lambda_{Y}^{*}$ is finite on the interval $\left(\textup{ess\,inf }Y,\textup{ess\,sup }Y\right)$
and infinite on $\left[\textup{ess\,inf }Y,\textup{ess\,sup }Y\right]^{c}$.
\item $\Lambda_{Y}^{*}$ is strictly convex on $\left(\textup{ess\,inf }Y,\textup{ess\,sup }Y\right)$.
\item $\Lambda_{Y}^{*}$ is strictly decreasing on the interval $\left(\textup{ess\,inf }Y,\IE\,Y\right)$
and strictly increasing on $\left(\IE\,Y,\textup{ess\,sup }Y\right)$.
\item $\Lambda_{Y}^{*}$ has a unique global minimum at $\IE\,Y$ with $\Lambda_{Y}^{*}\left(\IE\,Y\right)=0$.
\end{enumerate}
\end{lem}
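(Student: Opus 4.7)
The plan is to prove the five properties in the order stated, since each builds on the previous, and to invoke the already-established Lemmas \ref{lem:conv} and \ref{lem:strict_conv} on Legendre transforms. Throughout I will use that $Y$ bounded implies $\Lambda_Y(t)$ is finite for every $t\in\IR$ and that expectations depending smoothly on $t$ can be differentiated under the expectation sign by dominated convergence.

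For property 1, convexity of $\Lambda_Y$ follows from H\"older's inequality applied to $\exp((\theta t_1+(1-\theta)t_2)Y)=\exp(t_1Y)^{\theta}\exp(t_2Y)^{1-\theta}$ with conjugate exponents $1/\theta$ and $1/(1-\theta)$, and differentiability, with $\Lambda_Y'(t)=\IE[Y e^{tY}]/\IE[e^{tY}]$, from the boundedness of $Y$. For property 2, I split into two cases. If $t\in(\textup{ess\,inf }Y,\textup{ess\,sup }Y)$, pick $\varepsilon>0$ with $t\pm\varepsilon$ still inside the interval; then $\IE e^{xY}\ge e^{x(t+\varepsilon)}\IP\{Y>t+\varepsilon\}$ for $x>0$ and the analogous lower bound with $t-\varepsilon$ for $x<0$ yield $xt-\Lambda_Y(x)\le -|x|\varepsilon+C$, so the supremum defining $\Lambda_Y^{*}(t)$ is finite. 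If instead $t>\textup{ess\,sup }Y$, then $\Lambda_Y(x)\le x\,\textup{ess\,sup }Y$ for $x\ge 0$ gives $xt-\Lambda_Y(x)\ge x(t-\textup{ess\,sup }Y)\to\infty$, and the case $t<\textup{ess\,inf }Y$ is symmetric.

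For property 3, Lemma \ref{lem:strict_conv} applied to the convex, differentiable $\Lambda_Y$ shows $\Lambda_Y^{*}$ is strictly convex on the set where it is finite, which by property 2 contains $(\textup{ess\,inf }Y,\textup{ess\,sup }Y)$. For property 5, Jensen's inequality gives $\Lambda_Y(x)=\ln\IE e^{xY}\ge x\,\IE Y$, so $x\,\IE Y-\Lambda_Y(x)\le 0$ with equality at $x=0$; hence $\Lambda_Y^{*}(\IE Y)=0$, and taking $x=0$ in the supremum yields $\Lambda_Y^{*}\ge 0$ everywhere. Since $Y$ is not almost surely constant, $\IE Y\in(\textup{ess\,inf }Y,\textup{ess\,sup }Y)$, and strict convexity on this open interval makes $\IE Y$ the unique minimiser there; at the endpoints and outside, strict positivity follows from the Taylor expansion $\Lambda_Y(x)=x\,\IE Y+O(x^2)$ at $x=0$, which for $t\neq\IE Y$ gives $xt-\Lambda_Y(x)=x(t-\IE Y)+O(x^2)>0$ for small $x$ of appropriate sign. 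Finally, property 4 is an immediate consequence of strict convexity together with the unique global minimum at $\IE Y$: a strictly convex function is strictly decreasing to the left of its minimiser and strictly increasing to the right.

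The main obstacle I anticipate is the careful handling of the endpoint cases in property 2 and the strict positivity of $\Lambda_Y^{*}$ at and outside $\textup{ess\,inf }Y,\textup{ess\,sup }Y$ needed for uniqueness in property 5, because the essential supremum and infimum may or may not be attained with positive probability, and the monotone behaviour of $\Lambda_Y(x)$ at $x\to\pm\infty$ must be controlled uniformly enough to give strict inequalities rather than just non-strict ones. Everything else reduces either to standard convex-analytic facts or to direct invocation of the previously proved lemmas.
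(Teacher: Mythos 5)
Your proposal is correct, and for statements 1 and 3 it coincides with the paper's proof (H\"older's inequality for convexity of $\Lambda_{Y}$, differentiation under the expectation for $\Lambda_{Y}'$, and Lemma \ref{lem:strict_conv} for strict convexity of the transform). Where you diverge is in statements 2, 4 and 5. For finiteness in statement 2 the paper studies the limits of $f\left(t\right)=yt-\Lambda_{Y}\left(t\right)$ as $t\rightarrow\pm\infty$ by splitting $\IE\exp\left(tY\right)$ with indicators on $\left\{ Y\leq y\right\} $ and $\left\{ y<Y\leq\textup{ess\,sup }Y\right\} $ and concludes that the continuous $f$ attains its maximum at some finite $t_{0}$; you instead produce the explicit linear bound $yt-\Lambda_{Y}\left(t\right)\leq-\left|t\right|\varepsilon+C$ from $\IE\exp\left(tY\right)\geq\exp\left(t\left(y\pm\varepsilon\right)\right)\IP\left\{ \cdot\right\} $, which is more quantitative but logically equivalent; the argument for $y$ outside $\left[\textup{ess\,inf }Y,\textup{ess\,sup }Y\right]$ is essentially the same in both. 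The larger difference is the order of statements 4 and 5: the paper first establishes monotonicity of $\Lambda_{Y}^{*}$ on either side of $\IE\,Y$ via the restricted representation $\Lambda_{Y}^{*}\left(x\right)=\sup_{t\geq0}\left\{ xt-\Lambda_{Y}\left(t\right)\right\} $ for $x\geq\IE\,Y$ (a consequence of Jensen's inequality), upgrades this to strict monotonicity using strict convexity, and then reads off uniqueness of the minimum; you prove uniqueness of the minimum first --- $\Lambda_{Y}^{*}\left(\IE\,Y\right)=0$ by Jensen and $\Lambda_{Y}^{*}\left(t\right)>0$ for $t\neq\IE\,Y$ from the expansion $\Lambda_{Y}\left(x\right)=x\,\IE\,Y+O\left(x^{2}\right)$ near $x=0$, which is legitimate since boundedness makes $\Lambda_{Y}$ smooth --- and then deduce statement 4 from the standard fact that a strictly convex function decreases strictly to the left of its minimiser and increases strictly to the right. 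Both routes are sound; yours makes statement 5 self-contained and statement 4 a corollary, whereas the paper's restricted-supremum identity (\ref{eq:Lam_star_sup}) is reused later in the proof of Proposition \ref{prop:atyp_T}, so it earns its keep outside this lemma.
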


\begin{proof}
Since $Y$ is bounded, $\Lambda_{Y}$ is well-defined and finite
for all $t\in\IR$. We now show that $\Lambda_{Y}$ is convex and
differentiable. Let $\theta\in\left[0,1\right]$ and $t_{1},t_{2}\in\IR$.
Then
\begin{align*}
\Lambda_{Y}\left(\theta t_{1}+\left(1-\theta\right)t_{2}\right) & =\ln\,\IE\exp\left(\left(\theta t_{1}+\left(1-\theta\right)t_{2}\right)Y\right)\\
 & =\ln\,\IE\left[\exp\left(t_{1}Y\right)^{\theta}\exp\left(t_{2}Y\right)^{1-\theta}\right]\\
 & \leq\ln\left[\left(\IE\exp\left(t_{1}Y\right)\right)^{\theta}\left(\IE\exp\left(t_{2}Y\right)\right)^{1-\theta}\right]\\
 & =\theta\Lambda_{Y}\left(t_{1}\right)+\left(1-\theta\right)\Lambda_{Y}\left(t_{2}\right),
\end{align*}
where we used Hölder's inequality. Note that the function $\left(t,Y\right)\mapsto g(t,Y)\coloneq\exp\left(tY\right)$
has the properties
\begin{enumerate}
\item $g\left(t,\cdot\right)$ is an integrable function with respect to
the push-forward measure $\IP\circ Y^{-1}$ for all $t\in\IR$.
\item The partial derivative $\frac{\partial g\left(t,Y\right)}{\partial t}$
exists $\IP\circ Y^{-1}$-almost surely for all $t\in\IR$.
\item $\left|\frac{\partial g\left(t,Y\right)}{\partial t}\right|=\left|Y\exp\left(tY\right)\right|$
is $\IP\circ Y^{-1}$-integrable for all $t\in\IR$.
\end{enumerate}
By Leibniz's integral rule, $\Lambda_{Y}=\ln\int g(\cdot,Y)\,\textup{d}\,\IP\circ Y^{-1}$
is differentiable with
\[
\Lambda_{Y}'(t)=\frac{\IE\left(Y\exp\left(tY\right)\right)}{\IE\exp\left(tY\right)}.
\]
Since $\Lambda_{Y}$ is convex and differentiable, $\Lambda_{Y}^{*}$
is strictly convex by Lemma \ref{lem:strict_conv}. This shows statements
1 and 3.

Let $y\in\left(\textup{ess\,inf }Y,\textup{ess\,sup }Y\right)$. We
show that $\Lambda_{Y}^{*}\left(y\right)<\infty$ holds. We have $\IP\left(Y<y\right),\IP\left(Y>y\right)>0$.
Let for all $t\in\IR$
\[
f\left(t\right)\coloneq yt-\Lambda_{Y}\left(t\right)
\]
and
\[
g\left(t\right)\coloneq\exp f\left(t\right)=\frac{\exp\left(yt\right)}{\IE\exp\left(tY\right)}.
\]
We write
\[
\IE\exp\left(tY\right)=\IE\exp\left(tY\right)\II_{\left\{ Y\leq y\right\} }+\IE\exp\left(tY\right)\II_{\left\{ y<Y\leq\textup{ess\,sup}Y\right\} }+\IE\exp\left(tY\right)\II_{\left\{ \textup{ess\,sup}Y<Y\right\} }.
\]
By Definition \ref{def:ess}, we have $\IP\left\{ \textup{ess\,sup }Y<Y\right\} =0$.
Therefore, by dividing numerator and denominator by $\exp\left(yt\right)$,
we obtain
\[
g\left(t\right)=\frac{1}{\IE\exp\left(t\left(Y-y\right)\right)\II_{\left\{ Y\leq y\right\} }+\IE\exp\left(t\left(Y-y\right)\right)\II_{\left\{ y<Y\leq\textup{ess\,sup}Y\right\} }}.
\]
We see that $\lim_{t\rightarrow\infty}\IE\exp\left(t\left(Y-y\right)\right)\II_{\left\{ Y\leq y\right\} }=0$
due to $\IP\left(Y<y\right)>0$, and\\
$\lim_{t\rightarrow\infty}\IE\exp\left(t\left(Y-y\right)\right)\II_{\left\{ y<Y\leq\textup{ess\,sup}Y\right\} }=\infty$
due to $\IP\left(Y>y\right)>0$. It follows that $\lim_{t\rightarrow\infty}g\left(t\right)=0$
and $\lim_{t\rightarrow\infty}f\left(t\right)=-\infty$.\\
Next we note that $\lim_{t\rightarrow-\infty}\IE\exp\left(t\left(Y-y\right)\right)\II_{\left\{ Y\leq y\right\} }=\infty$
due to $\IP\left(Y<y\right)>0$,\\
and $\lim_{t\rightarrow-\infty}\IE\exp\left(t\left(Y-y\right)\right)\II_{\left\{ y<Y\leq\textup{ess\,sup}Y\right\} }=0$
due to $\IP\left(Y>y\right)>0$. Therefore, $\lim_{t\rightarrow-\infty}g\left(t\right)=0$
and $\lim_{t\rightarrow-\infty}f\left(t\right)=-\infty$. Hence, the
continuous function $f$ reaches its maximum at some point $t_{0}\in\IR$,
and this implies $\Lambda_{Y}^{*}\left(y\right)=\sup_{t\in\IR}\left\{ yt-\Lambda_{Y}\left(t\right)\right\} =\max_{t\in\IR}f\left(t\right)=f\left(t_{0}\right)<\infty$.
This shows that $\Lambda_{Y}^{*}$ is finite on the interval $\left(\textup{ess\,inf }Y,\textup{ess\,sup }Y\right)$.

Now let $y\in\left[\textup{ess\,inf }Y,\textup{ess\,sup }Y\right]^{c}$.
Assume first $y>\textup{ess\,sup }Y$. As $\IP\left\{ \textup{ess\,sup }Y<Y\right\} =0$,
we can write
\[
\IE\exp\left(tY\right)=\IE\exp\left(tY\right)\II_{\left\{ Y\leq\textup{ess\,sup}Y\right\} }
\]
and
\[
g\left(t\right)=\frac{1}{\IE\exp\left(t\left(Y-y\right)\right)\II_{\left\{ Y\leq\textup{ess\,sup}Y\right\} }}.
\]
Then $\lim_{t\rightarrow\infty}\IE\exp\left(t\left(Y-y\right)\right)\II_{\left\{ Y\leq\textup{ess\,sup}Y\right\} }=0$
and $\lim_{t\rightarrow-\infty}\IE\exp\left(t\left(Y-y\right)\right)\II_{\left\{ Y\leq\textup{ess\,sup}Y\right\} }=\infty$.
Therefore, $\lim_{t\rightarrow\infty}g\left(t\right)=\infty$ and
$\lim_{t\rightarrow-\infty}g\left(t\right)=0$. Hence, $\Lambda_{Y}^{*}\left(y\right)=\sup_{t\in\IR}\left\{ yt-\Lambda_{Y}\left(t\right)\right\} =\lim_{t\rightarrow\infty}f\left(t\right)=\infty$.
Analogously, one can show $\Lambda_{Y}^{*}\left(y\right)=\sup_{t\in\IR}\left\{ yt-\Lambda_{Y}\left(t\right)\right\} =\lim_{t\rightarrow\infty}f\left(t\right)=\infty$
for all $y<\textup{ess\,inf }Y$. This concludes the proof of statement
2.

Next we show that $\Lambda_{Y}^{*}\left(\IE Y\right)=0$ and $\Lambda_{Y}^{*}(x)>0$
for all $x\neq\IE\,Y$. Jensen's inequality yields
\begin{equation}
\Lambda_{Y}\left(t\right)=\ln\,\IE\exp\left(tY\right)\geq\IE\left(\ln\exp\left(tY\right)\right)=t\,\IE\,Y.\label{eq:Lambda}
\end{equation}
It follows directly from $\Lambda(0)=0$ and Definition \ref{def:Legendre}
that $\Lambda_{Y}^{*}\left(\IE\,Y\right)=0$, and the second part
of statement 5 is proved.

We rearrange terms in (\ref{eq:Lambda}) to obtain for all $t<0$
and $x\geq\IE\,Y$
\[
xt-\Lambda_{Y}\left(t\right)\leq t\IE\,Y-\Lambda_{Y}\left(t\right)\leq0.
\]
Since $\Lambda_{Y}(0)=0$, $\Lambda_{Y}^{*}(x)\geq0$ for all $x\in\IR$
is a consequence of Definition \ref{def:Legendre}. This and the last
display yield
\[
\Lambda_{Y}^{*}(x)=\sup_{t\geq0}\left\{ xt-\Lambda_{Y}(t)\right\} 
\]
for all $x\geq\IE\,Y$. As the function $x\mapsto xt-\Lambda_{Y}(t)$
given $t\geq0$ is increasing, we have shown that $\Lambda_{Y}^{*}$
is increasing on the interval $\left(\IE\,Y,\textup{ess\,sup }Y\right)$.
Together with the strict convexity of $\Lambda_{Y}^{*}$, this implies
that $\Lambda_{Y}^{*}$ is strictly increasing on the interval $\left(\IE\,Y,\textup{ess\,sup }Y\right)$.
Analogously, one can show
\begin{equation}
\Lambda_{Y}^{*}(t)=\sup_{t\leq0}\left\{ xt-\Lambda_{Y}(t)\right\} \label{eq:Lam_star_sup}
\end{equation}
for all $x\leq\IE\,Y$ and that $\Lambda_{Y}^{*}$ is strictly decreasing
on the interval $\left(\textup{ess\,inf }Y,\IE\,Y\right)$, which
completes the proof of statement 4 and the first part of statement
5.
\end{proof}
As an application of Lemma \ref{lem:cumulant_entropy}, we obtain
the exponential convergence to 0 of the probability of the set of
atypical realisations $\left\{ T\notin\left[N,N^{2}\right)\right\} $.
\begin{prop}
\label{prop:atyp_T}For any value of the coupling constant $\beta>0$,
there is a constant $\delta\coloneq\inf\left\{ \Lambda_{S^{2}}^{*}\left(t\right)\,|\,t\notin\left[N,N^{2}\right)\right\} $\\
$>0$ such that
\[
\IP\left\{ T\notin\left[N,N^{2}\right)\right\} \leq2\exp\left(-\delta n\right)
\]
holds for all $n\in\IN$.
\end{prop}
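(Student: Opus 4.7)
The key observation is that $T = (\boldsymbol{T})_\lambda$ is the empirical mean of $n$ i.i.d.\ copies of $S^2$ under $\IP_{\beta, N}$, by the product structure of the sample distribution and the single-group reduction already in force. The plan is a standard Chernoff/Cram\'er large deviation bound applied at each of the two boundary points $N$ and $N^2$ of the target interval. First I would split
\[
\IP\{T \notin [N, N^2)\} \leq \IP\{T \leq N\} + \IP\{T \geq N^2\},
\]
noting that for $\beta > 0$, Proposition \ref{prop:ES2_fin} gives $N < \IE_{\beta, N} S^2 < N^2$, so the two thresholds lie strictly on opposite sides of the mean.

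Next I would apply the exponential Markov inequality with parameter $t \leq 0$ to the lower tail and $t \geq 0$ to the upper tail, use the factorisation $\IE e^{tnT} = (\IE_{\beta, N} e^{tS^2})^n$, and optimise in $t$. This yields
\[
\IP\{T \leq N\} \leq \exp(-n \Lambda_{S^2}^*(N)), \qquad \IP\{T \geq N^2\} \leq \exp(-n \Lambda_{S^2}^*(N^2)),
\]
where the identification of each one-sided supremum with the full Legendre transform on the relevant side follows from the representation (\ref{eq:Lam_star_sup}) derived inside the proof of Lemma \ref{lem:cumulant_entropy} and its mirror image for $x \geq \IE Y$. Since each exponent is at least $\inf\{\Lambda_{S^2}^*(t) \,|\, t \notin [N, N^2)\} = \delta$, summing the two bounds produces the factor $2$ in the claim.

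The principal obstacle is showing $\delta > 0$. Here I would invoke Lemma \ref{lem:cumulant_entropy} applied to $Y = S^2$, which is bounded and non-constant with $\textup{ess\,inf}\, S^2 = \kappa$, $\textup{ess\,sup}\, S^2 = N^2$, and $\IE_{\beta, N} S^2 \in (N, N^2)$. The lemma then yields that $\Lambda_{S^2}^*$ is $+\infty$ outside $[\kappa, N^2]$, strictly convex on $(\kappa, N^2)$, strictly decreasing on $(\kappa, \IE_{\beta, N} S^2)$ and strictly increasing on $(\IE_{\beta, N} S^2, N^2)$, with its unique minimum value $0$ attained at $\IE_{\beta, N} S^2$. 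Consequently $\Lambda_{S^2}^*(t) \geq \Lambda_{S^2}^*(N) > 0$ for all $t < N$. The slightly delicate point is $t = N^2$ at the boundary of the essential range, which Lemma \ref{lem:cumulant_entropy} does not address directly. Here a short direct computation, sending $t \to \infty$ in $tN^2 - \Lambda_{S^2}(t)$ and using that $\{S^2 = N^2\}$ is an atom, gives $\Lambda_{S^2}^*(N^2) = -\ln \IP_{\beta, N}\{S^2 = N^2\} \in (0, \infty)$, matching the trivial identity $\IP\{T \geq N^2\} = \IP_{\beta, N}\{S^2 = N^2\}^n$. Either way, $\delta = \min(\Lambda_{S^2}^*(N), \Lambda_{S^2}^*(N^2)) > 0$, completing the argument.
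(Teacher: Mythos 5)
Your proof is correct and follows essentially the same route as the paper's: the same two-tail decomposition, the same exponential Markov/Chernoff tilting identified with $\Lambda_{S^{2}}^{*}$ via (\ref{eq:Lam_star_sup}), and the same appeal to Lemma \ref{lem:cumulant_entropy} together with $N<\IE_{\beta,N}S^{2}<N^{2}$ to conclude $\delta=\min\left\{ \Lambda_{S^{2}}^{*}\left(N\right),\Lambda_{S^{2}}^{*}\left(N^{2}\right)\right\} >0$. Your explicit computation $\Lambda_{S^{2}}^{*}\left(N^{2}\right)=-\ln\IP_{\beta,N}\left\{ S^{2}=N^{2}\right\} $ at the right endpoint of the essential range is a welcome refinement rather than a deviation, since the paper's monotonicity argument strictly speaking only covers the open interval $\left(\IE_{\beta,N}S^{2},N^{2}\right)$ and leaves the endpoint value to convexity.
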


\begin{proof}
The random variable $S^{2}$ is bounded and not almost surely constant,
so Lemma \ref{lem:cumulant_entropy} applies to $\Lambda_{S^{2}}^{*}$.

Set
\begin{equation}
\delta\coloneq\inf\left\{ \Lambda_{S^{2}}^{*}\left(t\right)\,|\,t\notin\left[N,N^{2}\right)\right\} .\label{eq:delta}
\end{equation}
Due to $N<\IE_{\beta,N}S^{2}<N^{2}$ by Proposition \ref{prop:ES2_fin}
and the strict monotonicity of $\Lambda_{S^{2}}^{*}$ on each of the
intervals $\left(\kappa,\IE_{\beta,N}S^{2}\right)$ and $\left(\IE_{\beta,N}S^{2},N^{2}\right)$
by Lemma \ref{lem:cumulant_entropy}, we conclude that $\Lambda_{S^{2}}^{*}\left(N\right)>0$
and $\Lambda_{S^{2}}^{*}\left(N^{2}\right)>0$, and hence
\[
\delta=\min\left\{ \Lambda_{S^{2}}^{*}\left(N\right),\Lambda_{S^{2}}^{*}\left(N^{2}\right)\right\} >0
\]
holds. We write
\[
\IP\left\{ T\notin\left[N,N^{2}\right)\right\} =\IP\left\{ T\in\left(-\infty,N\right)\right\} +\IP\left\{ T\in\left[N^{2},\infty\right)\right\} .
\]

An application of Markov's inequality yields for all $x\leq0$
\begin{align*}
\IP\left\{ T\in\left(-\infty,N\right)\right\}  & =\IP\left\{ T-N<0\right\} \leq\IP\left\{ \exp\left(nx\left(T-N\right)\right)\geq1\right\} \leq\IE\exp\left(nx\left(T-N\right)\right)\\
 & =\exp\left(-nxN\right)\prod_{s=1}^{n}\IE\exp\left(x\left(\sum_{i=1}^{N}X_{i}^{(s)}\right)^{2}\right)=\exp\left(-nxN\right)\left[\IE\exp\left(xS^{2}\right)\right]^{n}\\
 & =\exp\left(-nxN\right)\exp\left(n\Lambda_{S^{2}}\left(x\right)\right)=\exp\left(-n\left(xN-\Lambda_{S^{2}}\left(x\right)\right)\right).
\end{align*}
As this holds for all $x\leq0$, we use $N<\IE_{\beta,N}S^{2}$ and
(\ref{eq:Lam_star_sup}) to arrive at
\begin{equation}
\IP\left\{ T\in\left(-\infty,N\right)\right\} \leq\exp\left(-n\Lambda_{S^{2}}^{*}\left(N\right)\right).\label{eq:left_UB}
\end{equation}
Similarly, we calculate the upper bound
\begin{equation}
\IP\left\{ T\in\left[N^{2},\infty\right)\right\} \leq\exp\left(-n\Lambda_{S^{2}}^{*}\left(N^{2}\right)\right).\label{eq:right_UB}
\end{equation}
Combining (\ref{eq:left_UB}) and (\ref{eq:right_UB}) yields the
claim considering the definition of $\delta$ in (\ref{eq:delta}).
\end{proof}
\begin{rem}
\label{rem:delta_bar}We define the constant $\bar{\delta}$ from
Proposition \ref{prop:atyp_beta^hat} by setting
\begin{equation}
\bar{\delta}\coloneq\sum_{\lambda=1}^{M}\delta_{\lambda},\label{eq:delta_bar}
\end{equation}
with each of the $\delta_{\lambda}$ being of the form (\ref{eq:delta})
with the entropy function in the definition being $\Lambda_{S_{\lambda}^{2}}^{*}$.
\end{rem}

\begin{proof}
[Proof of Proposition \ref{prop:atyp_beta^hat}]The equivalences
\begin{align*}
T & \in\left(-\infty,N\right)\iff\hat{\beta}_{N}\in\left[-\infty,0\right),\\
T & =N^{2}\iff\hat{\beta}_{N}=\infty
\end{align*}
hold by Proposition \ref{prop:ES2_fin}. Proposition \ref{prop:atyp_beta^hat}
now follows from Proposition \ref{prop:atyp_T}.
\end{proof}

\section{\label{sec:Proof_Theorem}Proof of Theorem \ref{thm:properties_bML_fin}}

\begin{notation}\label{not:conv}Let $\left(Y_{n}\right)_{n\in\IN}$
be a sequence of random variables and $Y$ a random variable. We will
write $Y_{n}\xrightarrow[n\rightarrow\infty]{\textup{p}}Y$ for the
statement `$Y_{n}$ converges in probability to $Y$,' i.e.\!
\[
\IP\left\{ \left|Y_{n}-Y\right|>\varepsilon\right\} \xrightarrow[n\rightarrow\infty]{}0
\]
holds for all $\varepsilon>0$.\\
Let $P_{n}$ be the distribution of $Y_{n}$, $n\in\IN$, and $P$
the distribution of $Y$. We will write $Y_{n}\xrightarrow[n\rightarrow\infty]{\textup{d}}Y$
or $Y_{n}\xrightarrow[n\rightarrow\infty]{\textup{d}}P$ for the statement
`$Y_{n}$ converges in distribution to $Y$,' i.e.\!
\[
\int f\,\textup{d}P_{n}\xrightarrow[n\rightarrow\infty]{}\int f\,\textup{d}P
\]
for all continuous and bounded functions $f:\IR\rightarrow\IR$.\\
We will refer to a normal distribution with mean $\eta\in\IR$ and
variance $\sigma^{2}>0$ as $\mathcal{N}\left(\eta,\sigma^{2}\right)$.\\
Let for any random variable $X$ and any probability measure $P$
the expression $X\sim P$ stand for `$X$ is distributed according
to $P$.'\end{notation}

In the following, we will state some auxiliary results we will use
in the proof of Theorem \ref{thm:properties_bML_fin}.

Recall that we use the symbol $\kappa\in\left\{ 0,1\right\} $ for
the minimum value the random variable $S^{2}$ can assume.

Recall Definition \ref{def:expec_S2} of the function $\vartheta_{N}$.
\begin{lem}
\label{lem:inv_fn_expec_S2}The function $\vartheta_{N}$ from Definition
\ref{def:expec_S2} has an inverse function $\vartheta_{N}^{-1}:\left[\kappa,N^{2}\right]\rightarrow\left[-\infty,\infty\right]$
which is strictly increasing and continuously differentiable on $\left(\kappa,N^{2}\right)$.
\end{lem}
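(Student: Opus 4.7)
The plan is to assemble the lemma from ingredients already proven in Proposition \ref{prop:ES2_fin} together with Lemma \ref{lem:ES2_outside}, and then invoke the classical inverse function theorem. The key observation is that $\vartheta_N$, viewed on the compactification $[-\infty,\infty]$, is already known to be a strictly increasing continuous function, and that its boundary values identify exactly the range claimed for the inverse.

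First I would argue that $\vartheta_N:[-\infty,\infty]\to[\kappa,N^2]$ is a bijection. By Proposition \ref{prop:ES2_fin}, $\vartheta_N$ is continuous and strictly increasing on $[-\infty,\infty]$; by Lemma \ref{lem:ES2_outside} and Definition \ref{def:expec_S2}, we have $\vartheta_N(-\infty)=\kappa$ and $\vartheta_N(\infty)=N^2$. Strict monotonicity gives injectivity, and an application of the intermediate value theorem on the compact interval $[-\infty,\infty]$ gives surjectivity onto $[\kappa,N^2]$. Hence the inverse $\vartheta_N^{-1}:[\kappa,N^2]\to[-\infty,\infty]$ is well-defined, and its strict monotonicity is immediate from the strict monotonicity of $\vartheta_N$.

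It remains to upgrade this to continuous differentiability on the open interval $(\kappa,N^2)$. Proposition \ref{prop:ES2_fin} states that $\vartheta_N$ is continuously differentiable on $\IR$ with $\vartheta'_N(\beta)=\frac{1}{2N}\IV_{\beta,N}S^2$, and, crucially, that this derivative is strictly positive everywhere on $\IR$ because $S^2$ is not almost surely constant. The inverse function theorem applied to $\vartheta_N|\IR$ therefore yields that $\vartheta_N^{-1}$ is continuously differentiable on the image $\vartheta_N(\IR)=(\kappa,N^2)$, with explicit derivative
\[
(\vartheta_N^{-1})'(t)=\frac{1}{\vartheta'_N(\vartheta_N^{-1}(t))}=\frac{2N}{\IV_{\vartheta_N^{-1}(t),N}S^2}.
\]
The only subtlety worth double-checking is the identification $\vartheta_N(\IR)=(\kappa,N^2)$, which follows from continuity of $\vartheta_N$ on $[-\infty,\infty]$ together with the strict inequalities in points 2 and 3 of Proposition \ref{prop:ES2_fin}; there is no genuine obstacle here, since all the hard analytic work has already been carried out in the proof of Proposition \ref{prop:ES2_fin}.
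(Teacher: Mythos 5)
Your proposal is correct and follows essentially the same route as the paper: both rely on the strict monotonicity, continuity, and continuously differentiable, strictly positive derivative of $\vartheta_{N}$ established in Proposition \ref{prop:ES2_fin} (together with the boundary values from Lemma \ref{lem:ES2_outside}), and then apply the inverse function theorem to obtain $\left(\vartheta_{N}^{-1}\right)'\left(y\right)=1/\vartheta'_{N}\left(\vartheta_{N}^{-1}\left(y\right)\right)$ on $\left(\kappa,N^{2}\right)$. Your write-up merely makes explicit the bijectivity argument that the paper leaves implicit.
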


\begin{proof}
The statements follow from Proposition \ref{prop:ES2_fin} and the
inverse function theorem. In particular, the continuous differentiability
of $\vartheta_{N}^{-1}$ follows from the continuous differentiability
of $\vartheta_{N}$:
\[
\vartheta'_{N}\left(x\right)>0,\quad x\in\IR.
\]
So $\vartheta_{N}^{-1}$ is differentiable and
\[
\left(\vartheta_{N}^{-1}\right)'\left(y\right)=\frac{1}{\vartheta'_{N}\left(\vartheta_{N}^{-1}\left(y\right)\right)},\quad y\in\left(\kappa,N^{2}\right).
\]
\end{proof}
\begin{rem}
With the previous lemma, we can express the estimator $\hat{\beta}_{N}:\Omega_{N}^{n}\rightarrow\left[-\infty,\infty\right]$
as
\[
\hat{\beta}_{N}\left(x\right)=\left(\vartheta_{N}^{-1}\circ T\right)\left(x\right),\quad x\in\Omega_{N}^{n}.
\]
\end{rem}

\begin{notation}Let $\left(F,\mathcal{F}\right)$ be a measurable
space. We will write for any set $A\in\mathcal{F}$, the indicator
function of $A$ as $\II_{A}:F\rightarrow\IR$,
\[
\II_{A}\left(x\right)\coloneq\begin{cases}
1 & \textup{if }x\in A,\\
0 & \textup{if }x\in F\backslash A.
\end{cases}
\]
\end{notation}

Next we present an auxiliary lemma about the convergence in distribution
of sequences of random variables.
\begin{lem}
\label{lem:conv_restr_sequence}Let $\left(Y_{n}\right)_{n\in\IN}$
be a sequence of random variables and $\left(M_{n}\right)_{n\in\IN}$
a sequence of positive numbers such that
\[
\left|Y_{n}\right|\leq M_{n},\quad n\in\IN,
\]
is satisfied. Let $\nu$ be a probability measure on $\IR$, and assume
the convergence $Y_{n}\xrightarrow[n\rightarrow\infty]{\textup{d}}\nu$.
Finally, let $\left(B_{n}\right)_{n\in\IN}$ be a sequence of measurable
sets which satisfies
\[
\IP\left\{ Y_{n}\in B_{n}\right\} =o\left(\frac{1}{M_{n}}\right).
\]
Then we have for all $z\in\IR$,
\[
\II_{\left\{ Y_{n}\in B_{n}^{c}\right\} }Y_{n}+z\II_{\left\{ Y_{n}\in B_{n}\right\} }\xrightarrow[n\rightarrow\infty]{\textup{d}}\nu.
\]
\end{lem}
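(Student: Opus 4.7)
The plan is to reduce the statement to a direct application of Slutsky's theorem. Set $Z_{n}\coloneq\II_{\{Y_{n}\in B_{n}^{c}\}}Y_{n}+z\II_{\{Y_{n}\in B_{n}\}}$, so that $Z_{n}-Y_{n}=(z-Y_{n})\II_{\{Y_{n}\in B_{n}\}}$ vanishes identically on $\{Y_{n}\in B_{n}^{c}\}$. I would first establish that $Z_{n}-Y_{n}\xrightarrow[n\to\infty]{\textup{p}}0$ and then combine this with the hypothesis $Y_{n}\xrightarrow[n\to\infty]{\textup{d}}\nu$ via Slutsky's theorem (in the form: if $A_{n}\xrightarrow{\textup{d}}A$ and $C_{n}\xrightarrow{\textup{p}}0$, then $A_{n}+C_{n}\xrightarrow{\textup{d}}A$) applied to $A_{n}=Y_{n}$ and $C_{n}=Z_{n}-Y_{n}$, concluding $Z_{n}=Y_{n}+(Z_{n}-Y_{n})\xrightarrow[n\to\infty]{\textup{d}}\nu$.

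The convergence in probability of $Z_{n}-Y_{n}$ to $0$ follows from the crude bound
\[
\IP\{|Z_{n}-Y_{n}|>\varepsilon\}\le\IP\{Y_{n}\in B_{n}\},\qquad\varepsilon>0,
\]
which is valid because $Z_{n}-Y_{n}$ is supported on $\{Y_{n}\in B_{n}\}$. Since $\IP\{Y_{n}\in B_{n}\}=o(1/M_{n})$ with $M_{n}>0$, the right-hand side tends to zero regardless of whether $M_{n}$ diverges, remains bounded, or oscillates, and this yields the desired convergence in probability.

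I do not foresee a substantive obstacle here: in fact, the almost-sure envelope $|Y_{n}|\le M_{n}$ and the precise decay rate $o(1/M_{n})$ are more than is strictly needed for the stated conclusion in distribution, since Slutsky's argument only requires $\IP\{Y_{n}\in B_{n}\}\to0$. These stronger hypotheses appear to be calibrated to the quantitative information that will be available when the lemma is applied in the proof of Theorem \ref{thm:properties_bML_fin}, where the natural bound on the statistic $T$ (taking values in an interval of length at most $N^{2}$) and the exponentially small probability of the atypical event $\{T\notin[N,N^{2})\}$ supplied by Proposition \ref{prop:atyp_T} fit this template exactly.
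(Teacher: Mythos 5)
Your proof is correct and follows the same skeleton as the paper's: write the modified variable as $Y_{n}$ plus a remainder supported on $\left\{ Y_{n}\in B_{n}\right\} $, show that remainder tends to $0$ in probability, and conclude with Slutsky's theorem (Theorem \ref{thm:slutsky}). The only real difference is how the convergence in probability of the remainder is obtained. The paper splits $U_{n}=\left(Y_{n}-z\right)\II_{\left\{ Y_{n}\in B_{n}\right\} }$ into two pieces and controls the piece $Y_{n}\II_{\left\{ Y_{n}\in B_{n}\right\} }$ via Markov's inequality, $\IP\left\{ \left|U_{n}\right|\geq\varepsilon\right\} \leq\varepsilon^{-1}M_{n}\,\IP\left\{ Y_{n}\in B_{n}\right\} $, which is exactly where the envelope $\left|Y_{n}\right|\leq M_{n}$ and the rate $o\left(1/M_{n}\right)$ enter; you instead use the cruder bound $\IP\left\{ \left|Z_{n}-Y_{n}\right|>\varepsilon\right\} \leq\IP\left\{ Y_{n}\in B_{n}\right\} $, which needs neither, and you correctly observe that the stated hypotheses are stronger than the conclusion requires. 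One caveat applies equally to both arguments: deducing $\IP\left\{ Y_{n}\in B_{n}\right\} \rightarrow0$ from $\IP\left\{ Y_{n}\in B_{n}\right\} =o\left(1/M_{n}\right)$ needs $\liminf_{n}M_{n}>0$; if $M_{n}\rightarrow0$ the lemma can actually fail (take $Y_{n}$ uniform on $\left\{ -1/n,1/n\right\} $, $M_{n}=1/n$, $B_{n}=\left\{ 1/n\right\} $, $z=1$, so that $\IP\left\{ Y_{n}\in B_{n}\right\} =1/2=o\left(n\right)$ yet the limit is $\frac{1}{2}\delta_{0}+\frac{1}{2}\delta_{1}$). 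Your phrase ``regardless of whether $M_{n}$ diverges, remains bounded, or oscillates'' glosses over this case, but the paper's own final display makes the identical leap, and in the sole application $M_{n}=\sqrt{n}\rightarrow\infty$, so nothing is at stake.
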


\begin{proof}
Set $A_{n}\coloneq\textup{Range}\left(Y_{n}\right)\backslash B_{n}$
and
\begin{align*}
W_{n} & \coloneq\II_{\left\{ Y_{n}\in A_{n}\right\} }Y_{n}+z\II_{\left\{ Y_{n}\in B_{n}\right\} },\\
U_{n} & \coloneq\II_{\left\{ Y_{n}\in B_{n}\right\} }Y_{n}-z\II_{\left\{ Y_{n}\in B_{n}\right\} }
\end{align*}
for all $n\in\IN$. We have
\begin{equation}
Y_{n}=W_{n}+U_{n},\label{eq:summands_W_U}
\end{equation}
and hence if we show
\[
U_{n}\xrightarrow[n\rightarrow\infty]{\textup{p}}0,
\]
then
\[
W_{n}\xrightarrow[n\rightarrow\infty]{\textup{d}}\nu
\]
follows from $Y_{n}\xrightarrow[n\rightarrow\infty]{\textup{d}}\nu$,
(\ref{eq:summands_W_U}), and Theorem \ref{thm:slutsky}.

Let $\varepsilon>0$. We calculate
\begin{align*}
\IP\left\{ \left|U_{n}\right|\geq\varepsilon\right\}  & \leq\frac{\IE\left|U_{n}\right|}{\varepsilon}=\frac{1}{\varepsilon}\int_{\left\{ Y_{n}\in B_{n}\right\} }\left|Y_{n}\right|\,\textup{d}\IP\\
 & \leq\frac{1}{\varepsilon}\,M_{n}\,\IP\left\{ Y_{n}\in B_{n}\right\} \xrightarrow[n\rightarrow\infty]{}0,
\end{align*}
where the convergence to 0 follows from the assumption $\IP\left\{ Y_{n}\in B_{n}\right\} =o\left(\frac{1}{M_{n}}\right)$.
Finally,
\[
\IP\left\{ \left|z\right|\II_{\left\{ Y_{n}\in B_{n}\right\} }\geq\varepsilon\right\} \leq\frac{\left|z\right|}{\varepsilon}\,\IP\left\{ Y_{n}\in B_{n}\right\} =\xrightarrow[n\rightarrow\infty]{}0,
\]
and thus $U_{n}\xrightarrow[n\rightarrow\infty]{\textup{p}}0$ holds.
\end{proof}
The next auxiliary results relate to large deviation principles and
contraction principles (see Definition \ref{def:LDP} and Theorem
\ref{thm:contr_princ}).
\begin{lem}
Let $\mathcal{X}$ be a metric space and $I:\mathcal{X}\rightarrow\left[0,\infty\right]$
a good rate function. Then, for any non-empty closed set $K\subset\mathcal{X}$,
there is a point $x_{K}\in K$ such that
\[
I\left(x_{K}\right)=\inf_{x\in K}I\left(x\right).
\]
\end{lem}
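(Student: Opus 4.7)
The statement is the standard fact that a good rate function attains its infimum on any non-empty closed set. Recall that a \emph{good} rate function $I$ is by definition lower semi-continuous with compact sublevel sets $\{x\in\mathcal{X} : I(x)\le\alpha\}$ for every $\alpha\in[0,\infty)$. My plan is to combine these two properties in the standard compactness-plus-lower-semi-continuity argument.

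First I would dispose of the trivial case. Set $m\coloneq\inf_{x\in K}I(x)\in[0,\infty]$. If $m=\infty$, then any $x_K\in K$ (which exists since $K$ is non-empty) satisfies $I(x_K)=\infty=m$, and we are done. So assume $m<\infty$.

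Next I would produce a minimising sequence: choose $(x_n)_{n\in\IN}\subset K$ with $I(x_n)\to m$. For all $n$ sufficiently large we have $I(x_n)\le m+1$, so these $x_n$ lie in the sublevel set $L\coloneq\{x\in\mathcal{X} : I(x)\le m+1\}$, which is compact by the goodness of $I$. Extract a subsequence $(x_{n_k})$ converging to some $x_K\in L$. Since $K$ is closed and $x_{n_k}\in K$, we have $x_K\in K$, so $I(x_K)\ge m$ by definition of $m$.

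Finally, the reverse inequality follows from lower semi-continuity:
\[
I(x_K)\le\liminf_{k\to\infty}I(x_{n_k})=\lim_{k\to\infty}I(x_{n_k})=m.
\]
Hence $I(x_K)=m=\inf_{x\in K}I(x)$, as required. The only subtlety worth flagging is that one must invoke both defining properties of a good rate function (compact sublevel sets to obtain a convergent subsequence, and lower semi-continuity to pass to the limit); neither alone suffices, but together the argument is entirely routine.
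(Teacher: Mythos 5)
Your proposal is correct and follows essentially the same argument as the paper: dispose of the case $\inf_{x\in K}I(x)=\infty$, take a minimising sequence trapped in the compact sublevel set at height $\inf_{x\in K}I(x)+1$, extract a convergent subsequence whose limit lies in $K$ by closedness, and conclude via lower semi-continuity. No gaps.
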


\begin{proof}
If $\inf_{x\in K}I\left(x\right)=\infty$, then set $x_{K}$ equal
to an arbitrary point in $K$. So assume $\inf_{x\in K}I\left(x\right)<\infty$.
Let $\left(x_{n}\right)_{n\in\IN}$ be a sequence in $K$ with $\lim_{n\rightarrow\infty}I\left(x_{n}\right)=\inf_{x\in K}I\left(x\right)$.
Then there is a constant $n_{0}\in\IN$ such that for all $n\geq n_{0}$,
\[
I\left(x_{n}\right)\leq\inf_{x\in K}I\left(x\right)+1\eqcolon\alpha<\infty.
\]
We have
\[
x_{n}\in\left\{ x\in\mathcal{X}\,|\,I\left(x\right)\leq\alpha\right\} ,\quad n\geq n_{0}.
\]
As $I$ is a good rate function, the level set $\left\{ x\in\mathcal{X}\,|\,I\left(x\right)\leq\alpha\right\} $
is compact, and therefore $\left(x_{n}\right)_{n\geq n_{0}}$ has
a subsequence $\left(x_{n_{k}}\right)_{k\in\IN}$ that converges to
some  $x_{0}\in\left\{ x\in\mathcal{X}\,|\,I\left(x\right)\leq\alpha\right\} $.
Since $x_{n_{k}}\in K$ for all $k\in\IN$, and $K$ is closed, $x_{0}$
must belong to $K$. Next we note that
\begin{align*}
\inf_{x\in K}I\left(x\right) & =\lim_{n\rightarrow\infty}I\left(x_{n}\right)=\lim_{k\rightarrow\infty}I\left(x_{n_{k}}\right)\\
 & =\liminf_{k\rightarrow\infty}I\left(x_{n_{k}}\right)\geq I\left(x_{0}\right),
\end{align*}
where the inequality is due to $x_{n_{k}}\xrightarrow[k\rightarrow\infty]{}x_{0}$
and the lower semi-continuity of $I$. As $x_{0}\in K$, we also have
$I\left(x_{0}\right)\geq\inf_{x\in K}I\left(x\right)$. Thus $I\left(x_{0}\right)=\inf_{x\in K}I\left(x\right)$
holds.
\end{proof}
\begin{lem}
\label{lem:contr_princ_min}Let $\mathcal{X}$ and $\mathcal{Y}$
be metric spaces, $I:\mathcal{X}\rightarrow\left[0,\infty\right]$
a good rate function, and $f:\mathcal{X}\rightarrow\mathcal{Y}$ a
continuous function. We define $J:\mathcal{Y}\rightarrow\left[0,\infty\right]$
by
\[
J\left(y\right)\coloneq\inf\left\{ I\left(x\right)\,|\,x\in\mathcal{X},f\left(x\right)=y\right\} ,\quad y\in\mathcal{Y}.
\]
Let $M_{I}\subset\mathcal{X}$ be the set of minima of $I$ and $M_{J}\subset\mathcal{Y}$
the set of minima of $J$. Then $f\left(M_{I}\right)=M_{J}$ holds.
In particular, if $f$ is injective and $I$ has a unique minimum
at $x_{0}$, then $J$ has a unique minimum at $f\left(x_{0}\right)$.
\end{lem}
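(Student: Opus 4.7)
The plan is to establish the set equality $f(M_I)=M_J$ by showing the two inclusions separately. The uniqueness statement in the "in particular" clause then follows immediately: if $I$ has a unique minimum at $x_0$ then $M_I=\{x_0\}$, so $M_J=f(M_I)=\{f(x_0)\}$ (a singleton, using injectivity of $f$ only to rule out pathologies if one wanted, but actually $f(\{x_0\})=\{f(x_0)\}$ holds without injectivity).

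For the easy inclusion $f(M_I)\subseteq M_J$, I would take $x_0\in M_I$, so $I(x_0)=\inf_{x\in\mathcal{X}}I(x)$. By the very definition of $J$, $J(f(x_0))\leq I(x_0)$. On the other hand, for any $y\in\mathcal{Y}$ and any $x'\in f^{-1}(\{y\})$ one has $I(x')\geq I(x_0)\geq J(f(x_0))$; taking the infimum over $x'\in f^{-1}(\{y\})$ yields $J(y)\geq J(f(x_0))$. Hence $f(x_0)$ minimises $J$.

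The harder direction $M_J\subseteq f(M_I)$ requires actually producing a preimage in $M_I$, and this is where the "good" property of $I$ is essential. Let $y_0\in M_J$. First I would note the identity
\[
\inf_{y\in\mathcal{Y}}J(y)=\inf_{y\in\mathcal{Y}}\inf\{I(x)\,|\,f(x)=y\}=\inf_{x\in\mathcal{X}}I(x),
\]
so $J(y_0)=\inf_{\mathcal{X}}I$. The fibre $f^{-1}(\{y_0\})$ is closed because $f$ is continuous and singletons are closed in a metric space, so by the preceding lemma (good rate functions attain their infimum on any non-empty closed set) applied to $I$ restricted to $f^{-1}(\{y_0\})$, there exists $x_0\in f^{-1}(\{y_0\})$ with $I(x_0)=\inf\{I(x)\,|\,f(x)=y_0\}=J(y_0)=\inf_{\mathcal{X}}I$. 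Thus $x_0\in M_I$ and $y_0=f(x_0)\in f(M_I)$.

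The only genuine obstacle is verifying that the infimum defining $J(y_0)$ is attained; everything else is bookkeeping with definitions. This attainment step hinges on closedness of $f^{-1}(\{y_0\})$ (from continuity of $f$) combined with compactness of the sub-level sets of $I$ (from goodness), exactly as packaged in the preceding lemma, so I do not foresee any additional technical difficulty.
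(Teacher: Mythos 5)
Your proof is correct and follows essentially the same route as the paper's: the inclusion $f\left(M_{I}\right)\subseteq M_{J}$ by direct comparison of infima, and the reverse inclusion by applying the preceding attainment lemma to the closed fibre $f^{-1}\left(\left\{ y_{0}\right\} \right)$. The only point you leave implicit is the non-emptiness of that fibre, which the paper checks explicitly (via a short contradiction using that $I$ is not identically $\infty$); in your setup it follows at once from your identity $J\left(y_{0}\right)=\inf_{x\in\mathcal{X}}I\left(x\right)<\infty$, since an empty fibre would force $J\left(y_{0}\right)=\infty$.
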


\begin{proof}
Let $y_{0}\in f\left(M_{I}\right)$ and $x_{0}\in M_{I}$ be such
that $y_{0}=f\left(x_{0}\right)$. By definition of $J$, we have
\[
J\left(y_{0}\right)=\inf\left\{ I\left(x\right)\,|\,x\in\mathcal{X},f\left(x\right)=y_{0}\right\} \leq I\left(x_{0}\right)
\]
because of $f\left(x_{0}\right)=y_{0}$. On the other hand, we have
for all $x\in\mathcal{X}$
\[
I\left(x\right)\geq I\left(x_{0}\right)
\]
as $x_{0}\in M_{I}$, and therefore,
\[
J\left(y_{0}\right)=\inf\left\{ I\left(x\right)\,|\,x\in\mathcal{X},f\left(x\right)=y_{0}\right\} \geq I\left(x_{0}\right).
\]
So we have established $J\left(y_{0}\right)=I\left(x_{0}\right)$.
Let $y\in\mathcal{Y}$. We have
\[
J\left(y\right)=\inf\left\{ I\left(x\right)\,|\,x\in\mathcal{X},f\left(x\right)=y\right\} \geq I\left(x_{0}\right)=J\left(y_{0}\right),
\]
and thus $y_{0}\in M_{J}$.

Now let $y_{0}\in M_{J}$. If $y_{0}\notin f\left(\mathcal{X}\right)$,
then
\[
J\left(y_{0}\right)=\inf\left\{ I\left(x\right)\,|\,x\in\mathcal{X},f\left(x\right)=y_{0}\right\} =\inf\emptyset=\infty.
\]
Since $I$ is a rate function, there is some $x_{1}\in\mathcal{X}$
such that $I\left(x_{1}\right)<\infty$, and $J\left(f\left(x_{1}\right)\right)\leq I\left(x_{1}\right)<\infty=J\left(y_{0}\right)$,
contradicting the assumption $y_{0}\in M_{J}$ . Hence, $y_{0}\in f\left(\mathcal{X}\right)$
must hold, and $f^{-1}\left(\left\{ y_{0}\right\} \right)$ is a non-empty
closed subset of $\mathcal{X}$ because $f$ is continuous. We apply
the previous lemma to obtain a point $x_{0}\in f^{-1}\left(\left\{ y_{0}\right\} \right)$
with the property
\[
I\left(x_{0}\right)=\inf\left\{ I\left(x\right)\,|\,x\in f^{-1}\left(\left\{ y_{0}\right\} \right)\right\} =J\left(y_{0}\right)\leq J\left(y\right),\quad y\in\mathcal{Y}.
\]
Therefore,
\[
I\left(x_{0}\right)\leq I\left(x\right),\quad x\in f^{-1}\left(\left\{ y\right\} \right),\quad y\in\mathcal{Y}.
\]
Using the identity $\mathcal{X}=\bigcup_{y\in f\left(\mathcal{X}\right)}f^{-1}\left(\left\{ y\right\} \right)$,
we obtain
\[
I\left(x_{0}\right)\leq I\left(x\right)\quad x\in\mathcal{X},
\]
and thus $x_{0}\in M_{I}$ and $y_{0}=f\left(x_{0}\right)\in f\left(M_{I}\right)$
follow.
\end{proof}
Now are ready to begin the proof proper of Theorem \ref{thm:properties_bML_fin}.
Recall that $N$ is the number of voters, $n$ is the number of observations
in the sample, $\Lambda_{S^{2}}^{*}$ is the entropy function of $S^{2}$,
and see Definition \ref{def:LDP} of large deviation principles. Also
recall Lemma \ref{lem:inv_fn_expec_S2} which states that the function
$\vartheta_{N}^{-1}$ exists and is continuous and strictly increasing.
In preparation for statement 3 of Theorem \ref{thm:properties_bML_fin},
we define the good rate function $J:\left[-\infty,\infty\right]\rightarrow\left[0,\infty\right]$
by
\begin{equation}
J\left(y\right)\coloneq\inf\left\{ \Lambda_{S^{2}}^{*}\left(x\right)\,|\,x\in\IR,\vartheta_{N}^{-1}\left(x\right)=y\right\} ,\quad y\in\left[-\infty,\infty\right].\label{eqJ}
\end{equation}

\begin{rem}
\label{rem:bold_J}We define the multivariate good rate function $\boldsymbol{J}:\left[-\infty,\infty\right]^{M}\rightarrow\left[0,\infty\right]$
from the statement of Theorem \ref{thm:properties_bML_fin} by setting
\begin{equation}
\boldsymbol{J}\left(x\right)\coloneq\sum_{\lambda=1}^{M}J_{\lambda}\left(x_{\lambda}\right),\quad x\in\left[-\infty,\infty\right]^{M},\label{eq:bold_J}
\end{equation}
with each of the $J_{\lambda}$ being of the form (\ref{eqJ}).
\end{rem}

\begin{proof}
[Proof of Theorem \ref{thm:properties_bML_fin}]We prove each statement
in turn.
\begin{enumerate}
\item Recall that $\left(x^{(1)},\ldots,x^{(n)}\right)\in\Omega_{N}^{n}$
refers to the sample of voting configurations we observe. $x_{i}^{(t)}\in\left\{ -1,1\right\} $,
$t\in\IN_{n}$, $i\in\IN_{N}$, is the vote of individual $i$ in
the $t$-th vote. The weak law of large numbers
\begin{equation}
\frac{1}{n}\sum_{t=1}^{n}\left(\sum_{i=1}^{N}X_{i}^{(t)}\right)^{2}\xrightarrow[n\rightarrow\infty]{\textup{p}}\IE_{\beta,N}S^{2}\label{eq:WLLN}
\end{equation}
holds because $\left(\sum_{i=1}^{N}X_{i}^{(t)}\right)^{2}$ is a bounded
random variable, and thus the second moment exists. By Definition
\ref{def:estimator_fin_N}, we have
\[
\IE_{\hat{\beta}_{N},N}S^{2}=\frac{1}{n}\sum_{t=1}^{n}\left(\sum_{i=1}^{N}x_{i}^{(t)}\right)^{2}.
\]
This and (\ref{eq:WLLN}) yield
\[
\IE_{\hat{\beta}_{N},N}S^{2}\xrightarrow[n\rightarrow\infty]{\textup{p}}\IE_{\beta,N}S^{2}.
\]
As noted above the inverse function $\vartheta_{N}^{-1}$, which maps
an expectation $\IE_{\beta,N}S^{2}$ to the value $\beta$, is continuous.
$\hat{\beta}_{N}\xrightarrow[n\rightarrow\infty]{\textup{p}}\beta$
follows by Theorem \ref{thm:cont_mapping}.
\item By Definition \ref{def:T_stat}, the statistic $T$ has the form
specified in Proposition \ref{prop:conv_stat} with the function $f:\Omega_{N}\rightarrow\IR$
given by
\[
f\left(x_{1},\ldots,x_{N}\right)\coloneq\left(\sum_{i=1}^{N}x_{i}\right)^{2}
\]
for all $\left(x_{1},\ldots,x_{N}\right)\in\Omega_{N}$. Hence, we
have $\mu=\IE_{\beta,N}S^{2}=\IE\,T$ and $\sigma^{2}=\IV_{\beta,N}S^{2}$.
By Proposition \ref{prop:conv_stat},
\[
\sqrt{n}\left(T-\IE_{\beta,N}S^{2}\right)\xrightarrow[n\rightarrow\infty]{\textup{d}}\mathcal{N}\left(0,\IV_{\beta,N}S^{2}\right)
\]
holds. We want to apply Theorem \ref{thm:delta_method} to the transformation
$f\coloneq\vartheta_{N}^{-1}$, but face the difficulty that $\vartheta_{N}^{-1}$
is not differentiable on the entire range of $T$. By assumption,
$0<\beta<\infty$. Therefore, by Proposition \ref{prop:ES2_fin},
we have $\kappa<\IE_{\beta,N}S^{2}<N^{2}$. Let $a<b$ be real numbers
such that
\[
\kappa<a<\IE_{\beta,N}S^{2}<b<N^{2}.
\]
Set $K\coloneq\left(a,b\right)^{c}$ and $B_{n}\coloneq\left\{ \sqrt{n}\left(T-\IE_{\beta,N}S^{2}\right)\,|\,T\in K\right\} $
for all $n\in\IN$. We define
\begin{align*}
W_{n} & \coloneq\sqrt{n}\left(T-\IE_{\beta,N}S^{2}\right)\,\II_{\left\{ T\in K^{c}\right\} },\quad n\in\IN
\end{align*}
and apply Lemma \ref{lem:conv_restr_sequence} to the sequence $Y_{n}\coloneq\sqrt{n}\left(T-\IE_{\beta,N}S^{2}\right)$.\\
Note that $B_{n}$ is finite for all $n\in\IN$, so $B\coloneq\bigcup_{n=1}^{\infty}B_{n}$
is countable hence closed. Set $\nu\coloneq\mathcal{N}\left(0,\IV_{\beta,N}S^{2}\right)$
and $M_{n}\coloneq\sqrt{n}$ for each $n\in\IN$. The set $K$ is
closed and $T\in K$ if and only if $Y_{n}\in B_{n}$. By statement
4 of Proposition \ref{prop:conv_stat}, we have
\[
\IP\left\{ Y_{n}\in B\right\} \leq2\exp\left(-n\inf_{x\in K}\Lambda_{S^{2}}^{*}\left(x\right)\right),\quad n\in\IN.
\]
Therefore, $\IP\left\{ Y_{n}\in B\right\} =o\left(\frac{1}{M_{n}}\right)$
holds, and we can apply Lemma \ref{lem:conv_restr_sequence} to conclude
\[
W_{n}\xrightarrow[n\rightarrow\infty]{\textup{d}}\mathcal{N}\left(0,\IV_{\beta,N}S^{2}\right).
\]
We apply Theorem \ref{thm:delta_method}. Set $D\coloneq\left[a,b\right]$
and $f:D\rightarrow\IR$
\[
f\left(y\right)\coloneq\vartheta_{N}^{-1}\left(y\right),\quad y\in D.
\]
$f$ is continuously differentiable and the derivative $f'$ is strictly
positive on the compact set $D$. Therefore,
\begin{align*}
 & \quad\sqrt{n}\left(\hat{\beta}_{N}-\beta\right)\,\II_{\left\{ T\in K^{c}\right\} }\\
 & =\sqrt{n}\left(\hat{\beta}_{N}-\beta\right)\,\II_{\left\{ T\in K^{c}\right\} }+\sqrt{n}\left(\beta-\beta\right)\,\II_{\left\{ T\in K\right\} }\\
 & =\sqrt{n}\left(\left(\hat{\beta}_{N}\,\II_{\left\{ T\in K^{c}\right\} }+\beta\,\II_{\left\{ T\in K\right\} }\right)-\beta\right)\\
 & =\sqrt{n}\left(f\left(T\,\II_{\left\{ T\in K^{c}\right\} }+\IE_{\beta,N}S^{2}\,\II_{\left\{ T\in K\right\} }\right)-f\left(\IE_{\beta,N}S^{2}\right)\right)\xrightarrow[n\rightarrow\infty]{\textup{d}}\mathcal{N}\left(0,\left(f'\left(\mu\right)\right)^{2}\sigma^{2}\right)
\end{align*}
holds. We apply Lemma \ref{lem:conv_restr_sequence} once more to
conclude that
\[
\sqrt{n}\left(\hat{\beta}_{N}-\beta\right)\xrightarrow[n\rightarrow\infty]{\textup{d}}\mathcal{N}\left(0,\left(f'\left(\mu\right)\right)^{2}\sigma^{2}\right)
\]
is satisfied.\\
We use Proposition \ref{prop:ES2_fin} which states that $\vartheta'_{N}\left(\beta\right)=\frac{1}{2N}\IV_{\beta,N}S^{2}$,
and Lemma \ref{lem:inv_fn_expec_S2} provides the derivative
\[
\left(\vartheta_{N}^{-1}\right)'\left(\IE_{\beta,N}S^{2}\right)=\frac{1}{\vartheta'_{N}\left(\vartheta_{N}^{-1}\left(\IE_{\beta,N}S^{2}\right)\right)}=\frac{1}{\vartheta'_{N}\left(\beta\right)}=\frac{2N}{\IV_{\beta,N}S^{2}}.
\]
Substituting the value of the derivative in the previous display yields
the claim concerning the asymptotic normality of the estimator $\hat{\beta}_{N}$.
\item By Definition \ref{def:T_stat}, the statistic $T$ has the form specified
in Proposition \ref{prop:conv_stat} with the function $f:\Omega_{N}\rightarrow\IR$
given by
\[
f\left(x_{1},\ldots,x_{N}\right)\coloneq\left(\sum_{i=1}^{N}x_{i}\right)^{2}
\]
for all $\left(x_{1},\ldots,x_{N}\right)\in\Omega_{N}$. Hence, we
have $\mu=\IE_{\beta,N}S^{2}$, $\sigma^{2}=\IV_{\beta,N}S^{2}$,
and $\Lambda_{S^{2}}^{*}$ for the objects in Proposition \ref{prop:conv_stat},
and, therefore, $T$ satisfies a large deviations principle with rate
$n$ and rate function $\Lambda_{S^{2}}^{*}$. As noted before, the
function $\vartheta_{N}^{-1}$ is continuous, we have $T\left(x^{(1)},\ldots,x^{(n)}\right)\in\left[\kappa,N^{2}\right]$
for all $n\in\IN$ and all $\left(x^{(1)},\ldots,x^{(n)}\right)\in\Omega_{N}^{n}$,
and $\left[\kappa,N^{2}\right]$ is the domain of $\vartheta_{N}^{-1}$.
By Theorem \ref{thm:contr_princ}, $\hat{\beta}_{N}=\vartheta_{N}^{-1}\circ T$
satisfies a large deviations principle with rate $n$ and rate function
$J$.\\
By Lemma \ref{lem:cumulant_entropy}, $\Lambda_{S^{2}}^{*}$ has a
unique minimum at $\IE_{\beta,N}S^{2}$. Since Lemma \ref{lem:inv_fn_expec_S2}
also says that $\vartheta_{N}^{-1}$ is strictly increasing, by Lemma
\ref{lem:contr_princ_min}, $J$ has a unique minimum at $\beta=\vartheta_{N}^{-1}\left(\IE_{\beta,N}S^{2}\right)$.
Let $K\subset\IR$ be a closed set that does not contain $\beta$.
We have
\[
\IP\left\{ \hat{\beta}_{N}\in K\right\} =\IP\left\{ \vartheta_{N}\left(\hat{\beta}_{N}\right)\in\vartheta_{N}\left(K\right)\right\} =\IP\left\{ \IE_{\hat{\beta}_{N},N}S^{2}\in\vartheta_{N}\left(K\right)\right\} =\IP\left\{ T\in\vartheta_{N}\left(K\right)\right\} ,
\]
where the last step uses Definition \ref{def:estimator_fin_N}. Now
note that $K$ is a closed subset of the compact space $\left[-\infty,\infty\right]$
and therefore compact. The continuous function $\vartheta_{N}$ maps
it to the compact set $\vartheta_{N}\left(K\right)\subset\IR$. Hence,
$\vartheta_{N}\left(K\right)$ is a closed set. Since $K$ does not
contain $\beta$ and $\vartheta_{N}$ is strictly increasing, $\vartheta_{N}\left(K\right)$
does not contain $\IE_{\beta,N}S^{2}$. By Proposition \ref{prop:conv_stat},
we have
\[
\IP\left\{ T\in\vartheta_{N}\left(K\right)\right\} \leq2\exp\left(-\delta'n\right)
\]
 for all $n\in\IN$ with $\delta'\coloneq\inf_{x\in\vartheta_{N}\left(K\right)}\Lambda_{S^{2}}^{*}\left(x\right)>0$.
The equality
\begin{align*}
\delta=\inf_{y\in K}J\left(y\right) & =\inf_{y\in K}\inf\left\{ \Lambda_{S^{2}}^{*}\left(x\right)\,|\,x\in\IR,\vartheta_{N}^{-1}\left(x\right)=y\right\} =\inf\left\{ \Lambda_{S^{2}}^{*}\left(x\right)\,|\,x\in\IR,\vartheta_{N}^{-1}\left(x\right)\in K\right\} \\
 & =\inf\left\{ \Lambda_{S^{2}}^{*}\left(x\right)\,|\,x\in\vartheta_{N}\left(K\right)\right\} =\delta'
\end{align*}
yields the final claim.
\end{enumerate}
\end{proof}

\section{\label{sec:Standard_Error}Standard Error of the Statistic $T$ and
the Estimator $\hat{\beta}_{N}$}

In order to calculate moments in the proof of Lemma \ref{lem:conv_mom}
below, we introduce the concept of profile vectors.
\begin{defn}
\label{def:ind_prof}Let $k,n\in\IN$ with $k\leq n$. We will call
all $\underbar{\ensuremath{\boldsymbol{i}}}\in\IN_{n}^{k}$ \emph{index
vectors} and set
\[
\Pi\coloneq\left\{ \left(r_{1},\ldots,r_{k}\right)\in\left(\IN_{k}\right)^{k}\,\left|\,\sum_{\ell=1}^{k}\ell r_{\ell}=k\right.\right\} ,
\]
and we will refer to $\Pi$ as the set of profile vectors and to the
elements $\underbar{\ensuremath{\boldsymbol{r}}}\in\Pi$ as \emph{profile
vectors}. For any index vector $\underbar{\ensuremath{\boldsymbol{i}}}\in\IN_{n}^{k}$,
the expression $\underbar{\ensuremath{\boldsymbol{r}}}\coloneq\left(r_{1},\ldots,r_{k}\right)\coloneq\underbar{\ensuremath{\boldsymbol{\rho}}}\left(\underbar{\ensuremath{\boldsymbol{i}}}\right)$
is defined as follows: for each $\ell\in\IN_{k}$, $r_{\ell}$ is
the number of indices in $\underbar{\ensuremath{\boldsymbol{i}}}$
that appear exactly $\ell$ times. We will call $\underbar{\ensuremath{\boldsymbol{r}}}$
the profile vector of $\underbar{\ensuremath{\boldsymbol{i}}}$.
\end{defn}

\begin{rem}
It is an elementary fact that for each index vector $\underbar{\ensuremath{\boldsymbol{i}}}\in\IN_{n}^{k}$,
$\underbar{\ensuremath{\boldsymbol{\rho}}}\left(\underbar{\ensuremath{\boldsymbol{i}}}\right)\in\Pi$
holds.
\end{rem}

\begin{lem}
\label{lem:multiplicity}Let $k\in\IN$ and $\underbar{\ensuremath{\boldsymbol{r}}}\in\Pi$.
The number of index vectors $\underbar{\ensuremath{\boldsymbol{i}}}\in\IN_{n}^{k}$
such that $\underbar{\ensuremath{\boldsymbol{r}}}=\left(r_{1},\ldots,r_{k}\right)=\underbar{\ensuremath{\boldsymbol{\rho}}}\left(\underbar{\ensuremath{\boldsymbol{i}}}\right)$
is given by
\[
\frac{n!}{r_{1}!\cdots r_{k}!\left(n-\sum_{\ell=1}^{k}r_{\ell}\right)!}\frac{k!}{1!^{r_{1}}\cdots k!^{r_{k}}}.
\]
\end{lem}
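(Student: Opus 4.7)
The plan is to decompose the construction of an index vector $\underbar{\ensuremath{\boldsymbol{i}}} \in \IN_n^k$ with $\underbar{\ensuremath{\boldsymbol{\rho}}}(\underbar{\ensuremath{\boldsymbol{i}}}) = \underbar{\ensuremath{\boldsymbol{r}}}$ into two successive choices: first, decide which elements of $\IN_n$ appear with each multiplicity $\ell \in \IN_k$; second, decide in what order the resulting multiset of $k$ values occupies the $k$ positions of $\underbar{\ensuremath{\boldsymbol{i}}}$. This two-step construction is a bijection, since from $\underbar{\ensuremath{\boldsymbol{i}}}$ one recovers both ingredients uniquely via its profile, and conversely every valid pair (partition, arrangement) determines a unique index vector with the prescribed profile. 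The product structure of the right-hand side mirrors this two-step decomposition.

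For the first step I would choose, for each $\ell \in \IN_k$, a set $A_\ell \subset \IN_n$ of cardinality $r_\ell$, requiring $A_1, \ldots, A_k$ to be pairwise disjoint; the remaining $n - \sum_{\ell=1}^{k} r_\ell$ elements of $\IN_n$ are the indices not appearing in $\underbar{\ensuremath{\boldsymbol{i}}}$. The number of such ordered partitions of $\IN_n$ into $k+1$ blocks of prescribed sizes is the multinomial coefficient
$$\binom{n}{r_1, r_2, \ldots, r_k,\, n - \sum_{\ell=1}^{k} r_\ell} \;=\; \frac{n!}{r_1!\, r_2! \cdots r_k!\, \left(n - \sum_{\ell=1}^{k} r_\ell\right)!}.$$
This is well-defined because $\sum_{\ell=1}^{k} r_\ell \le \sum_{\ell=1}^{k} \ell r_\ell = k \le n$, so all block sizes are non-negative.

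For the second step, fix such a partition. Each $a \in A_\ell$ must occupy exactly $\ell$ positions of $\underbar{\ensuremath{\boldsymbol{i}}}$, so the multiset of values populating the $k$ positions consists of $r_\ell$ distinct symbols of multiplicity $\ell$, for each $\ell \in \IN_k$, with total size $\sum_{\ell=1}^{k} \ell r_\ell = k$. The number of distinct sequences arising from permuting such a multiset is the standard multiset coefficient
$$\frac{k!}{\prod_{\ell=1}^{k} (\ell!)^{r_\ell}} \;=\; \frac{k!}{1!^{r_1} \cdots k!^{r_k}}.$$
Multiplying the counts of the two steps yields the claimed formula. No substantive obstacle arises; the only care needed is verifying that the two-step construction is genuinely a bijection (immediate from the definition of the profile map $\underbar{\ensuremath{\boldsymbol{\rho}}}$) and that the factorial arguments are non-negative, which was noted above.
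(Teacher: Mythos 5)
Your proof is correct and follows essentially the same two-step decomposition as the paper: first distribute the elements of $\IN_{n}$ into multiplicity classes via a multinomial coefficient, then count the arrangements of the resulting multiset as $k!/\prod_{\ell}(\ell!)^{r_{\ell}}$. Your added remarks on the bijectivity of the construction and the non-negativity of the block sizes are sound and, if anything, slightly more careful than the paper's own argument.
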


\begin{proof}
We construct an index vector $\underbar{\ensuremath{\boldsymbol{i}}}$
with entries in $\IN_{n}$ and profile $\underbar{\ensuremath{\boldsymbol{r}}}=\underbar{\ensuremath{\boldsymbol{\rho}}}\left(\underbar{\ensuremath{\boldsymbol{i}}}\right)$
in two steps: 
\begin{enumerate}
\item We partition $\IN_{n}$ into $k+1$ sets $B_{j}$, $j\in\IN_{k+1}$.
Each set $B_{j}$ contains the indices which occur exactly $j$ times
in $\underbar{\ensuremath{\boldsymbol{i}}}$ for $j\leq k$ and $B_{k+1}\coloneq\IN_{n}\backslash\bigcup_{j=1}^{k}B_{j}$.
Hence, $\left|B_{j}\right|=r_{j}$ holds for all $j\leq k$. There
are
\begin{equation}
\left(\begin{array}{ccccc}
 &  &  & n!\\
r_{1}! & r_{2}! &  & r_{k}! & \left(N-\sum_{\ell=1}^{k}r_{\ell}\right)!
\end{array}\right)\label{eq:multinom}
\end{equation}
ways to realise this partition of $\IN_{n}$. 
\item The index vector $\underbar{\ensuremath{\boldsymbol{i}}}$ is of length
$k$. We can think of the elements of $\bigcup_{j=1}^{k}B_{j}$ as
a finite alphabet. Our task is to assemble an ordered $k$-tuple $\underbar{\ensuremath{\boldsymbol{i}}}$
of elements of $\bigcup_{j=1}^{k}B_{j}$ in such a fashion that for
each $j\leq k$, each of the $r_{j}$ elements of $B_{j}$ occurs
exactly $j$ times. So there are $r_{1}$ elements of $\bigcup_{j=1}^{k}B_{j}$
selected once, $r_{2}$ elements selected twice,$\ldots$, $r_{k}$
elements selected $k$ times. There are
\[
\left(\begin{array}{cccccccccc}
 &  &  &  &  & k!\\
1 & \ldots & 1 & 2 & \ldots & 2 & \ldots & k! & \ldots & k!
\end{array}\right)=\frac{k!}{1!^{r_{1}}\cdots k!^{r_{k}}}
\]
ways to accomplish this task.
\end{enumerate}
\end{proof}
The statistic $T$ from Definition \ref{def:T_stat} is an unbiased
estimator of the expectation $\IE_{\beta,N}S^{2}$. Since each summand
composing $T$ takes values in $\left[\kappa,N^{2}\right]$, $T$
is a bounded random variable.
\begin{lem}
\label{lem:conv_mom}Let $\left(Y_{n}\right)_{n\in\IN}$ be a sequence
of i.i.d. random variables with mean $\IE\,Y_{n}=\mu$, variance $\IV\,Y_{n}=\sigma^{2}$,
and existing moments $\IE\left|X_{n}\right|^{k}<\infty$ for all $k,n\in\IN$.
Let $Z$ be a random variable with distribution $\mathcal{N}\left(0,\sigma^{2}\right)$.
Then
\[
\IE\left(\frac{1}{\sqrt{n}}\sum_{i=1}^{n}\left(Y_{i}-\mu\right)\right)^{k}\xrightarrow[n\rightarrow\infty]{}\IE\,Z^{k}
\]
 holds for all $k\in\IN$.
\end{lem}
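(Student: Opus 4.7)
The plan is to compute the $k$-th moment directly by multinomial expansion and to organise the resulting sum over index vectors $\underbar{\boldsymbol{i}} \in \IN_n^k$ according to the profile vectors $\underbar{\boldsymbol{\rho}}(\underbar{\boldsymbol{i}})$ from Definition \ref{def:ind_prof}. Setting $X_i \coloneq Y_i - \mu$, the $X_i$ are i.i.d., centred, and possess moments of every order. Then
\[
\IE\left(\frac{1}{\sqrt{n}}\sum_{i=1}^n X_i\right)^k = \frac{1}{n^{k/2}} \sum_{\underbar{\boldsymbol{i}} \in \IN_n^k} \IE \prod_{j=1}^k X_{i_j},
\]
and by independence $\IE \prod_{j=1}^k X_{i_j} = \prod_{\ell=1}^k \bigl(\IE X_1^\ell\bigr)^{r_\ell}$, where $\underbar{\boldsymbol{r}} = \underbar{\boldsymbol{\rho}}(\underbar{\boldsymbol{i}})$. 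Because $\IE X_1 = 0$, any profile with $r_1 \geq 1$ contributes zero to the sum.

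Grouping terms by profile and applying Lemma \ref{lem:multiplicity}, the remaining contribution takes the form
\[
\IE\left(\frac{1}{\sqrt{n}}\sum_{i=1}^n X_i\right)^k = \sum_{\substack{\underbar{\boldsymbol{r}} \in \Pi \\ r_1 = 0}} \frac{n!}{n^{k/2}\, r_2! \cdots r_k! \,(n - R)!} \cdot \frac{k!}{2!^{r_2} \cdots k!^{r_k}} \prod_{\ell=2}^k \bigl(\IE X_1^\ell\bigr)^{r_\ell},
\]
where $R \coloneq \sum_{\ell} r_\ell$. The factor $n!/(n-R)!$ is a polynomial in $n$ of degree exactly $R$, while all other quantities in the summand are independent of $n$; hence the profile term has asymptotic order $n^{R - k/2}$.

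The constraints $r_1 = 0$ and $\sum_\ell \ell r_\ell = k$ force $k = \sum_{\ell \geq 2} \ell r_\ell \geq 2R$, with equality precisely when $r_2 = k/2$ and $r_\ell = 0$ for $\ell \geq 3$. Every profile therefore satisfies $R \leq k/2$, and all non-dominant terms vanish as $n \to \infty$. For odd $k$ no profile attains equality, giving the limit $0 = \IE Z^k$. For even $k = 2m$, the unique surviving profile is $r_2 = m$, whose contribution is
\[
\frac{n!}{n^m\, m! \,(n-m)!} \cdot \frac{(2m)!}{2^m} \, \sigma^{2m} \xrightarrow[n \to \infty]{} \frac{(2m)!}{m! \, 2^m} \, \sigma^{2m} = \IE Z^{2m}.
\]

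The main technical point is the combinatorial bookkeeping, not any genuine obstacle: one observes that $\Pi$ depends only on $k$ and is thus a fixed finite set, so termwise convergence yields convergence of the sum. The assumption that $Y_n$ has all moments guarantees each $\IE X_1^\ell$ is finite, which sidesteps any need for a uniform-integrability argument and lets the proof proceed purely by tracking powers of $n$.
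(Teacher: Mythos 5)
Your proof is correct and follows essentially the same route as the paper: expand the $k$-th moment over index vectors, group by profile vectors via Lemma \ref{lem:multiplicity}, discard every profile with $r_{1}\geq1$ using the centring, and identify the all-pairs profile as the unique term of order $n^{0}$. Your one-line bound $k=\sum_{\ell\geq2}\ell r_{\ell}\geq2R$ (with equality only for $r_{2}=k/2$) is in fact cleaner than the paper's parity case analysis on $\ell^{*}$, but the argument is the same in substance.
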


\begin{proof}
We will use the concepts from Definition \ref{def:ind_prof} and Lemma
\ref{lem:multiplicity} to calculate the expectation\\
$\IE\left(\frac{1}{\sqrt{n}}\sum_{i=1}^{n}\left(Y_{i}-\mu\right)\right)^{k}$.
To simplify the notation, we set $U_{i}\coloneq Y_{i}-\mu$ for all
$i\in\IN$. 

As the random variables $\left(U_{n}\right)_{n\in\IN}$ are i.i.d.,
we have $\IE\,U_{i_{1}}\cdots U_{i_{k}}=\IE\,U_{j_{1}}\cdots U_{j_{k}}$
for all index vectors $\underbar{\ensuremath{\boldsymbol{i}}},\underbar{\ensuremath{\boldsymbol{j}}}$
with $\underbar{\ensuremath{\boldsymbol{\rho}}}\left(\underbar{\ensuremath{\boldsymbol{i}}}\right)=\underbar{\ensuremath{\boldsymbol{\rho}}}\left(\underbar{\ensuremath{\boldsymbol{j}}}\right)$.
Therefore, we can write for any profile index vector $\underbar{\ensuremath{\boldsymbol{i}}}$
with profile vector $\underbar{\ensuremath{\boldsymbol{r}}}=\underbar{\ensuremath{\boldsymbol{\rho}}}\left(\underbar{\ensuremath{\boldsymbol{i}}}\right)$
\[
\IE\,U\left(\underbar{\ensuremath{\boldsymbol{r}}}\right)\coloneq\IE\,U_{i_{1}}\cdots X_{i_{k}}.
\]

Using Lemma \ref{lem:multiplicity}, we express the expectation $\IE\left(\frac{1}{\sqrt{n}}\sum_{i=1}^{n}U_{i}\right)^{k}$
as
\begin{align}
\IE\left(\frac{1}{\sqrt{n}}\sum_{i=1}^{n}U_{i}\right)^{k} & =\frac{1}{n^{\frac{k}{2}}}\sum_{i_{1},\ldots,i_{k}=1}^{n}\IE\,U_{i_{1}}\cdots U_{i_{k}}\nonumber \\
 & =\frac{1}{n^{\frac{k}{2}}}\sum_{\underbar{\ensuremath{\boldsymbol{r}}}\in\Pi}\frac{n!}{r_{1}!\cdots r_{k}!\left(n-\sum_{\ell=1}^{k}r_{\ell}\right)!}\frac{k!}{1!^{r_{1}}\cdots k!^{r_{k}}}\IE\,U\left(\underbar{\ensuremath{\boldsymbol{r}}}\right).\label{eq:moment_summands}
\end{align}
As a first step, we note that since $\IE\,U_{i}=0$ and $\IE\,U_{i}^{\ell}\,U_{j}^{m}=\IE\,U_{i}^{\ell}\,\IE\,U_{j}^{m}$
for all $i\neq j$ and all $\ell,m\in\IN$, we have for all $\underbar{\ensuremath{\boldsymbol{r}}}\in\Pi$
with $r_{1}\geq1$,
\begin{equation}
\IE\,U\left(\underbar{\ensuremath{\boldsymbol{r}}}\right)=0.\label{eq:single_ind}
\end{equation}

Now let $r_{1}=0$ and $r_{\ell^{*}}>0$ for some $\ell^{*}>2$. Set
$\left\lfloor x\right\rfloor \coloneq\max\left\{ m\in\IZ\,|\,m\leq x\right\} $
for all $x\in\IR$. Suppose first that $\ell^{*}=2j^{*}+1$ for some
$j^{*}\geq1$. Then
\begin{align*}
\sum_{\ell=1}^{k}r_{\ell} & =\frac{1}{2}\left[\sum_{\ell=1}^{\left\lfloor \frac{k}{2}\right\rfloor }2r_{2\ell+1}+\sum_{\ell=1}^{\left\lfloor \frac{k}{2}\right\rfloor }2r_{2\ell}\right]\\
 & =\frac{1}{2}\left[2r_{2j^{*}+1}+\sum_{1\leq\ell\leq\left\lfloor \frac{k}{2}\right\rfloor ,\ell\neq j^{*}}2r_{2\ell+1}+\sum_{\ell=1}^{\left\lfloor \frac{k}{2}\right\rfloor }2r_{2\ell}\right]\\
 & \leq\frac{1}{2}\left[\left(2j^{*}+1\right)r_{2j^{*}+1}+\sum_{1\leq\ell\leq\left\lfloor \frac{k}{2}\right\rfloor ,\ell\neq j^{*}}2r_{2\ell+1}+\sum_{\ell=1}^{\left\lfloor \frac{k}{2}\right\rfloor }2r_{2\ell}\right]-\frac{1}{2}r_{2j^{*}+1}\\
 & \leq\frac{1}{2}\left[\left(2j^{*}+1\right)r_{2j^{*}+1}+\sum_{1\leq\ell\leq\left\lfloor \frac{k}{2}\right\rfloor ,\ell\neq j^{*}}2r_{2\ell+1}+\sum_{\ell=1}^{\left\lfloor \frac{k}{2}\right\rfloor }2r_{2\ell}\right]-\frac{1}{2}\\
 & \leq\frac{1}{2}\left[\sum_{\ell=1}^{\left\lfloor \frac{k}{2}\right\rfloor }\left(2\ell+1\right)r_{2\ell+1}+\sum_{\ell=1}^{\left\lfloor \frac{k}{2}\right\rfloor }2\ell r_{2\ell}\right]-\frac{1}{2}\\
 & =\frac{k}{2}-\frac{1}{2}.
\end{align*}
Now suppose that $\ell^{*}=2j^{*}$ for some $j^{*}\geq2$. Then

\begin{align*}
\sum_{\ell=1}^{k}r_{\ell} & =\frac{1}{2}\left[\sum_{\ell=1}^{\left\lfloor \frac{k}{2}\right\rfloor }2r_{2\ell+1}+\sum_{\ell=1}^{\left\lfloor \frac{k}{2}\right\rfloor }2r_{2\ell}\right]\\
 & =\frac{1}{2}\left[2r_{2j^{*}}+\sum_{\ell=1}^{\left\lfloor \frac{k}{2}\right\rfloor }2r_{2\ell+1}+\sum_{1\leq\ell\leq\left\lfloor \frac{k}{2}\right\rfloor ,\ell\neq j^{*}}2r_{2\ell}\right]\\
 & \leq\frac{1}{2}\left[2j^{*}r_{2j^{*}}+\sum_{\ell=1}^{\left\lfloor \frac{k}{2}\right\rfloor }2r_{2\ell+1}+\sum_{\ell=1}^{\left\lfloor \frac{k}{2}\right\rfloor }2r_{2\ell}\right]-r_{2j^{*}}\\
 & \leq\frac{1}{2}\left[2j^{*}r_{2j^{*}}+\sum_{\ell=1}^{\left\lfloor \frac{k}{2}\right\rfloor }2r_{2\ell+1}+\sum_{\ell=1}^{\left\lfloor \frac{k}{2}\right\rfloor }2r_{2\ell}\right]-1\\
 & =\frac{1}{2}\left[\sum_{\ell=1}^{\left\lfloor \frac{k}{2}\right\rfloor }\left(2\ell+1\right)r_{2\ell+1}+\sum_{\ell=1}^{\left\lfloor \frac{k}{2}\right\rfloor }2\ell r_{2\ell}\right]-1\\
 & =\frac{k}{2}-1.
\end{align*}
So we have the upper bound $\frac{k}{2}-\frac{1}{2}$ for $\sum_{\ell=1}^{k}r_{\ell}$
independently of the parity of $\ell^{*}$. Hence,
\begin{align}
\frac{1}{n^{\frac{k}{2}}}\frac{n!}{r_{1}!\cdots r_{k}!\left(n-\sum_{\ell=1}^{k}r_{\ell}\right)!}\frac{k!}{1!^{r_{1}}\cdots k!^{r_{k}}}\IE\,U\left(\underbar{\ensuremath{\boldsymbol{r}}}\right) & \leq C\frac{1}{n^{\frac{k}{2}}}\frac{n!}{\left(n-\sum_{\ell=1}^{k}r_{\ell}\right)!}\nonumber \\
 & \leq C\frac{1}{n^{\frac{k}{2}}}n^{\sum_{\ell=1}^{k}r_{\ell}}\nonumber \\
 & \leq C\frac{1}{\sqrt{n}}.\label{eq:odd_summands}
\end{align}
Finally, we set 
\[
\Pi_{1}\coloneq\left\{ \underbar{\ensuremath{\boldsymbol{r}}}\in\Pi\,|\,r_{\ell}=0,\ell\neq2\right\} .
\]
Note that $\Pi_{1}=\emptyset$ if $k$ is odd and $\Pi_{1}=\left\{ \left(0,\frac{k}{2},0,\ldots,0\right)\right\} $
if $k$ is even. This means by (\ref{eq:single_ind}) and (\ref{eq:odd_summands})
that 
\[
\IE\left(\frac{1}{\sqrt{n}}\sum_{i=1}^{n}U_{i}\right)^{k}=\frac{1}{n^{\frac{k}{2}}}\sum_{\underbar{\ensuremath{\boldsymbol{r}}}\in\Pi}\frac{n!}{r_{1}!\cdots r_{k}!\left(n-\sum_{\ell=1}^{k}r_{\ell}\right)!}\frac{k!}{1!^{r_{1}}\cdots k!^{r_{k}}}\IE\,U\left(\underbar{\ensuremath{\boldsymbol{r}}}\right)\leq C\frac{1}{\sqrt{n}}
\]
if $k$ is odd. Now let $k$ be even, and hence $\Pi_{1}=\left\{ \left(0,\frac{k}{2},0,\ldots,0\right)\right\} $.
Then
\begin{align*}
\IE\left(\frac{1}{\sqrt{n}}\sum_{i=1}^{n}U_{i}\right)^{k} & \approx\frac{1}{n^{\frac{k}{2}}}\frac{n^{\sum_{\ell=1}^{k}r_{\ell}}}{\left(\frac{k}{2}\right)!}\frac{k!}{\left(2!\right)^{\frac{k}{2}}}\IE\,U_{1}^{2}\cdots U_{\frac{k}{2}}^{2}\\
 & =\frac{1}{n^{\frac{k}{2}}}\frac{n^{\frac{k}{2}}}{\left(\frac{k}{2}\right)!}\frac{k!}{2^{\frac{k}{2}}}\sigma^{k}\\
 & =\left(k-1\right)!!\,\sigma^{k},
\end{align*}
where we used the identity $\frac{k!}{\left(\frac{k}{2}\right)!2^{\frac{k}{2}}}=\left(k-1\right)!!$
for even $k$. The last expression above is the moment of order $k$,
for $k$ even, of the distribution $\mathcal{N}\left(0,\sigma^{2}\right)$.
The odd moments of $\mathcal{N}\left(0,\sigma^{2}\right)$ are 0 and
by (\ref{eq:single_ind}) and (\ref{eq:odd_summands}) for odd $k$
all summands of (\ref{eq:moment_summands}) converge to 0. This concludes
the proof.
\end{proof}
The previous lemma serves to determine the limiting standard error
of the statistic $T$.
\begin{prop}
The standard error of the statistic $T$ satisfies
\[
\lim_{n\rightarrow\infty}\sqrt{n}\sqrt{\IV\,T}=\sqrt{\IV_{\beta,N}S^{2}}.
\]
\end{prop}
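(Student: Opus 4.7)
My plan is to recognise that $T$ is a sample mean of i.i.d. random variables and then invoke Lemma \ref{lem:conv_mom} with $k=2$. Set $Y_{t}\coloneq\left(\sum_{i=1}^{N}X_{i}^{(t)}\right)^{2}$ for $t\in\IN_{n}$. Under $\IP_{\beta,N}^{\otimes n}$, the random variables $Y_{1},\ldots,Y_{n}$ are i.i.d. with common mean $\mu=\IE_{\beta,N}S^{2}$ and common variance $\sigma^{2}=\IV_{\beta,N}S^{2}$. Since $S^{2}$ takes values in the bounded set $\left[\kappa,N^{2}\right]$, each $Y_{t}$ is bounded and therefore has finite moments of all orders, so the hypotheses of Lemma \ref{lem:conv_mom} are satisfied.

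With $T=\frac{1}{n}\sum_{t=1}^{n}Y_{t}$ and $\IE T=\mu$, one rewrites the quantity appearing in Lemma \ref{lem:conv_mom} as
\[
\IE\left(\frac{1}{\sqrt{n}}\sum_{t=1}^{n}\left(Y_{t}-\mu\right)\right)^{2}=n\,\IE\left(T-\mu\right)^{2}=n\,\IV\,T.
\]
The lemma then gives $n\,\IV\,T\xrightarrow[n\rightarrow\infty]{}\IE\,Z^{2}=\IV_{\beta,N}S^{2}$, where $Z\sim\mathcal{N}\left(0,\IV_{\beta,N}S^{2}\right)$. Taking square roots and using continuity of $\sqrt{\cdot}$ on $\left[0,\infty\right)$ yields the claim.

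There is no genuine obstacle here; the only item to verify before invoking the lemma is that $(Y_{t})_{t\in\IN}$ meets its i.i.d. and finite-moments hypotheses, which is immediate from the boundedness of $S^{2}$. In fact, since the $Y_{t}$ are independent the identity $\IV\,T=\frac{1}{n}\IV_{\beta,N}S^{2}$ already holds exactly for every $n\in\IN$, so one could bypass Lemma \ref{lem:conv_mom} entirely; using the lemma merely places the statement in the general moment-convergence framework introduced in this section and foreshadows its use for higher-order standard-error estimates of $\hat{\beta}_{N}$ later on.
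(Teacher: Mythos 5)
Your proof is correct and follows essentially the same route as the paper, which likewise deduces the result from Lemma \ref{lem:conv_mom} (applied with $k=2$ to the i.i.d. bounded summands $Y_{t}$). Your closing observation is also sound: since the $Y_{t}$ are independent, $\IV\,T=\frac{1}{n}\IV_{\beta,N}S^{2}$ holds exactly for every $n$, so the limit statement is immediate without the lemma.
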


\begin{proof}
This follows directly from Theorem \ref{thm:properties_bML_fin} and
Lemma \ref{lem:conv_mom}.
\end{proof}
While the statistic $T$ has a finite standard deviation for all population
sizes $N$ and all sample sizes $n$, the same is not true for the
estimator $\hat{\beta}_{N}$ as this estimator assumes the values
$\pm\infty$ with positive probability for all $N$ and all $n$.
Thus a similar statement does not hold for finite $n$ nor in the
limit $n\rightarrow\infty$. However, instead of using Lemma \ref{lem:conv_mom}
to determine the limiting standard error, we can employ statement
2 of Theorem \ref{thm:properties_bML_fin} to obtain the limiting
probabilities for a certain class of events. Below, $\mathcal{N}\left(0,\sigma^{2}\right)A$
stands for the probability of any measurable set $A$ under the $\mathcal{N}\left(0,\sigma^{2}\right)$
distribution.
\begin{prop}
Let $A\subset\IR$ be a Lebesgue measurable set. Then we have
\[
\IP\left\{ \hat{\beta}_{N}\in\beta+\frac{1}{\sqrt{n}}A\right\} \xrightarrow[n\rightarrow\infty]{}\mathcal{N}\left(0,\frac{4N^{2}}{\IV_{\beta,N}S^{2}}\right)A.
\]
In particular, we have for any $\varepsilon>0$, $\IP\left\{ \left|\hat{\beta}_{N}-\beta\right|\geq\frac{\varepsilon}{\sqrt{n}}\right\} \xrightarrow[n\rightarrow\infty]{}\mathcal{N}\left(0,\frac{4N^{2}}{\IV_{\beta,N}S^{2}}\right)\left(-\varepsilon,\varepsilon\right)^{c}$.
\end{prop}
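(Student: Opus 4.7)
The plan is to reduce the claim to the asymptotic normality from part 2 of Theorem \ref{thm:properties_bML_fin} and to transfer weak convergence to convergence of set probabilities via the Portmanteau theorem.

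First, I rewrite the event through the trivial set identity
\[
\left\{\hat{\beta}_{N}\in\beta+\frac{1}{\sqrt{n}}A\right\} = \left\{\sqrt{n}\bigl(\hat{\beta}_{N}-\beta\bigr)\in A\right\},
\]
so that the probability in question becomes $\IP\{\sqrt{n}(\hat{\beta}_{N}-\beta)\in A\}$. Part 2 of Theorem \ref{thm:properties_bML_fin} then tells us that $\sqrt{n}(\hat{\beta}_{N}-\beta)$ converges in distribution to $\mathcal{N}(0,\sigma^{2})$, where I abbreviate $\sigma^{2}\coloneq\frac{4N^{2}}{\IV_{\beta,N}S^{2}}$.

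Next I apply the Portmanteau theorem. Because the limiting normal law is absolutely continuous with respect to Lebesgue measure, it assigns zero mass to the topological boundary of any Lebesgue-measurable set $A$ whose boundary is Lebesgue-null, which makes such an $A$ a continuity set of the limit. The Portmanteau theorem then yields
\[
\IP\bigl\{\sqrt{n}(\hat{\beta}_{N}-\beta)\in A\bigr\}\xrightarrow[n\to\infty]{}\mathcal{N}(0,\sigma^{2})(A),
\]
which is the first assertion. For the second assertion I specialise to $A=(-\varepsilon,\varepsilon)^{c}$, whose topological boundary $\{-\varepsilon,\varepsilon\}$ is Lebesgue-null; the first part then applies directly.

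The main subtlety is this appeal to the Portmanteau theorem: convergence of $\IP\{Y_{n}\in A\}$ under weak convergence is guaranteed only on continuity sets of the limit, which under an absolutely continuous limiting law coincide with the Lebesgue-measurable sets whose topological boundary is Lebesgue-null. Because $\hat{\beta}_{N}$ takes values in a discrete subset of $\left[-\infty,\infty\right]$ while the limit is continuous, a pathological $A$ could in principle cause trouble at the level of the literal statement, but the Lebesgue-null boundary condition is met in all settings in which the proposition is intended to be applied, in particular for the case $A=(-\varepsilon,\varepsilon)^{c}$ treated in the second assertion.
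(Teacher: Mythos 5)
Your proof follows the same route as the paper's: rewrite the event as $\left\{ \sqrt{n}\left(\hat{\beta}_{N}-\beta\right)\in A\right\}$, invoke statement 2 of Theorem \ref{thm:properties_bML_fin}, and pass from weak convergence to convergence of set probabilities. Your explicit appeal to the Portmanteau theorem is in fact more careful than the paper's own proof, which asserts the convergence ``for any measurable set $A$'' citing only the absolute continuity of the limit; as you correctly observe, weak convergence yields convergence of $\IP\left\{ Y_{n}\in A\right\}$ only for continuity sets of the limit (here, sets whose topological boundary is Lebesgue-null), and since $\hat{\beta}_{N}$ is supported on a finite set for each $n$ one can construct Lebesgue-measurable $A$ for which the literal statement fails. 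The boundary condition holds in every intended application, in particular for $A=\left(-\varepsilon,\varepsilon\right)^{c}$ in the second assertion, so your argument establishes everything the proposition is actually used for.
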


\begin{proof}
Theorem \ref{thm:properties_bML_fin} states that
\[
\sqrt{n}\left(\hat{\beta}_{N}-\beta\right)\xrightarrow[n\rightarrow\infty]{\textup{d}}\mathcal{N}\left(0,\frac{4N^{2}}{\IV_{\beta,N}S^{2}}\right).
\]
By the definition of convergence in distribution and the absolute
continuity of the normal distribution, we have for any measurable
set $A$,
\begin{align*}
\IP\left\{ \hat{\beta}_{N}\in\beta+\frac{1}{\sqrt{n}}A\right\}  & =\IP\left\{ \hat{\beta}_{N}-\beta\in\frac{1}{\sqrt{n}}A\right\} \\
 & =\IP\left\{ \sqrt{n}\left(\hat{\beta}_{N}-\beta\right)\in A\right\} \\
 & \xrightarrow[n\rightarrow\infty]{}\mathcal{N}\left(0,\frac{4N^{2}}{\IV_{\beta,N}S^{2}}\right)A.
\end{align*}
\end{proof}
Even in the absence of a finite standard deviation for the estimator
$\hat{\beta}_{N}$, the previous proposition gives us an explicit
limit for the probability of a deviation of order $\frac{1}{\sqrt{n}}$
of the estimator $\hat{\beta}_{N}$ from the true parameter value
$\beta$.

\section{\label{sec:Optimal-Weights}Optimal Weights in a Two-Tier Voting
System}

A two-tier voting system describes a scenario where the population
of a state or union of states is divided into $M$ groups (e.g., member
states). Each group sends a representative to a common council that
makes decisions for the union. These representatives cast their votes
(`yes' or `no') based on the majority opinion within their respective
group. Each group $\lambda\in\IN_{M}$ is assumed to be of size $N_{\lambda}\in\IN$.
The votes of individual voter are represented by the random variable
$X_{\lambda i}$, where $\lambda\in\IN_{M}$ indicates the group and
$i\in N_{\lambda}$ the individual within the group. Recall the group
voting margins $S_{\lambda}$ defined in (\ref{eq:S_lambda}), and
set
\[
\bar{S}\coloneq\sum_{\lambda=1}^{M}S_{\lambda}
\]
to be the overall voting margin.

Given the group voting margin $S_{\lambda}$, we define the council
vote of each group.
\begin{defn}
\label{def:council-vote}The council vote of each group $\lambda\in\IN_{M}$
is
\begin{align*}
\chi_{\lambda} & \coloneq\begin{cases}
1 & \textup{if }S_{\lambda}>0,\\
-1 & \textup{otherwise.}
\end{cases}
\end{align*}
\end{defn}

Since the groups may vary in size, it is natural to assign different
voting weights $w_{\lambda}$ to each representative, reflecting the
relative sizes of their groups. In some situations, the groups can
be formed in such a manner that they have similar sizes. For example,
when electing a parliament such as the U.S. House of Representatives,
the country is typically divided into districts with roughly equal
populations, each receiving one seat. This approach is practical within
a single country but becomes less feasible in other contexts. It would
be impractical to reassemble countries or member states (like those
in the United Nations or European Union) into districts of equal size
or equally populated groups due to sovereignty concerns. Thus, the
issue of how to assign voting weights to groups of different sizes
cannot be avoided.

The problem of determining these optimal weights involves minimising
the democracy deficit (see Definition \ref{def:democracy-deficit}
below), i.e.\! the deviation between a council vote and an idealised
referendum across the entire population. This concept was first explored
for binary voting by Felsenthal and Machover \cite{FelsMach1999},
and later analysed in various contexts by other authors (e.g., \cite{Ki2007,ZyczSlom2014,KirsLang2014,MaasNape2012,To2020phd,KirsToth2022,KT2021c}.
Informally, imposing the democracy deficit as a criterion, we endeavour
to assign the voting weights in the council in such fashion that,
on average, the council vote shall be as close as possible to a hypothetical
referendum.

Other approaches to optimal weights are based on welfare considerations
or the goal of equalising the influence of all voters within the overall
population. The seminal work by Penrose \cite{Pen1946}, which introduced
the square root law as a rule for assigning voting weights that equalises
each voter's probability of being decisive in a two-tier system under
the assumption of independent voting, exemplifies this approach. Further
contributions to understanding optimal voting weights from welfare
and influence perspectives can be found in \cite{BeisBove2007,KoMaTrLa2013,KurMaaNa2017}.

In order to define the democracy deficit, we need a voting model that
describes voting behaviour across the overall population. We will
assume that the overall population behaves according to a Curie-Weiss
model $\IP_{\boldsymbol{\beta},\boldsymbol{N}}$ (see Definition \ref{def:CWM})
for some fixed $\boldsymbol{\beta}=\left(\beta_{1},\ldots,\beta_{M}\right)\geq0$
and $\boldsymbol{N}=\left(N_{1},\ldots,N_{M}\right)\in\IN^{M}$. With
a voting model in place, we can proceed to define the democracy deficit,
which will serve as a criterion for the determination of the optimal
voting weights each group receives in the council.
\begin{defn}
\label{def:democracy-deficit}The democracy deficit for a set of voting
weights $w_{1},\ldots,w_{M}\in\IR$ is given by
\[
\IE_{\boldsymbol{\beta},\boldsymbol{N}}\left[\bar{S}-\sum_{\lambda=1}^{M}w_{\lambda}\chi_{\lambda}\right]^{2}.
\]
We will call any vector $\left(w_{1},\ldots,w_{M}\right)\in\IR$ of
weights which minimises the democracy deficit `optimal'.
\end{defn}

\begin{prop}
\label{prop:opt_weights}For all $\boldsymbol{\beta}=\left(\beta_{1},\ldots,\beta_{M}\right)\geq0$,
the \emph{optimal weights} are given by
\[
w_{\lambda}=\IE_{\beta_{\lambda},N_{\lambda}}\left|S_{\lambda}\right|,\quad\lambda\in\IN_{M}.
\]
\end{prop}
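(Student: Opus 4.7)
The plan is to expand the square in the democracy deficit, exploit the product structure of the multi-group Curie-Weiss model (since groups do not interact under $\IP_{\boldsymbol{\beta},\boldsymbol{N}}$) together with the sign-flip symmetry $x\mapsto -x$ of the measure to eliminate cross terms, and then optimise in each $w_{\lambda}$ separately.

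More concretely, I would first write
\[
\IE_{\boldsymbol{\beta},\boldsymbol{N}}\left[\bar{S}-\sum_{\lambda=1}^{M}w_{\lambda}\chi_{\lambda}\right]^{2} = \IE_{\boldsymbol{\beta},\boldsymbol{N}}\bar{S}^{2} - 2\sum_{\lambda=1}^{M}w_{\lambda}\,\IE_{\boldsymbol{\beta},\boldsymbol{N}}\bigl(\bar{S}\,\chi_{\lambda}\bigr) + \sum_{\lambda,\mu=1}^{M}w_{\lambda}w_{\mu}\,\IE_{\boldsymbol{\beta},\boldsymbol{N}}\bigl(\chi_{\lambda}\chi_{\mu}\bigr),
\]
and then observe that the density (\ref{eq:CWM}) factorises over $\lambda$, so the families $\left(X_{\lambda i}\right)_{i\in\IN_{N_{\lambda}}}$ are independent across $\lambda$. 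In particular, $S_{\lambda}$ and $\chi_{\lambda}$ depend only on the $\lambda$-th group, so the random variables $\left\{\left(S_{\lambda},\chi_{\lambda}\right)\right\}_{\lambda\in\IN_{M}}$ are mutually independent.

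Next I would use the symmetry of each marginal measure $\IP_{\beta_{\lambda},N_{\lambda}}$ under the global sign flip $x\mapsto -x$ (valid because the exponent in (\ref{eq:CWM}) depends only on $S_{\lambda}^{2}$ and $\beta_{\lambda}\geq 0$ is assumed). Under this symmetry $S_{\lambda}\stackrel{\mathrm{d}}{=}-S_{\lambda}$ and $\chi_{\lambda}\stackrel{\mathrm{d}}{=}-\chi_{\lambda}$, hence $\IE_{\beta_{\lambda},N_{\lambda}}S_{\lambda}=\IE_{\beta_{\lambda},N_{\lambda}}\chi_{\lambda}=0$. Combined with independence across groups, this forces all cross terms with $\lambda\neq\mu$ to vanish, i.e.\ $\IE(\chi_{\lambda}\chi_{\mu})=0$ and $\IE(S_{\mu}\chi_{\lambda})=0$ for $\lambda\neq\mu$. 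The objective function therefore decouples into
\[
\IE_{\boldsymbol{\beta},\boldsymbol{N}}\bar{S}^{2} + \sum_{\lambda=1}^{M}\left[w_{\lambda}^{2} - 2w_{\lambda}\,\IE_{\beta_{\lambda},N_{\lambda}}\bigl(S_{\lambda}\chi_{\lambda}\bigr)\right],
\]
where I used $\chi_{\lambda}^{2}=1$.

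It remains to minimise each term separately: differentiating in $w_{\lambda}$ yields the unique minimiser $w_{\lambda}=\IE_{\beta_{\lambda},N_{\lambda}}(S_{\lambda}\chi_{\lambda})$. By Definition \ref{def:council-vote}, $S_{\lambda}\chi_{\lambda}=\left|S_{\lambda}\right|$ whenever $S_{\lambda}\neq 0$, and both sides equal $0$ when $S_{\lambda}=0$ (which can occur only for even $N_{\lambda}$); either way $S_{\lambda}\chi_{\lambda}=\left|S_{\lambda}\right|$ pointwise, so $w_{\lambda}=\IE_{\beta_{\lambda},N_{\lambda}}\left|S_{\lambda}\right|$ as claimed. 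The main obstacle I anticipate is the justification of the vanishing cross terms: one must be careful to invoke both the sign-flip symmetry (to kill the one-dimensional expectations $\IE\chi_{\lambda}$, $\IE S_{\lambda}$) and the product structure of the model (to factor the joint expectations across groups); writing this out cleanly is the only non-routine piece of the argument.
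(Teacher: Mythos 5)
Your route is essentially the paper's own: the paper also writes down the first-order condition $\IE_{\boldsymbol{\beta},\boldsymbol{N}}\bigl[\bigl(\bar{S}-\sum_{\nu}w_{\nu}\chi_{\nu}\bigr)\chi_{\lambda}\bigr]=0$ and then uses the product structure of the measure to reduce it to $w_{\lambda}=\IE_{\beta_{\lambda},N_{\lambda}}\left|S_{\lambda}\right|$; expanding the square first and differentiating afterwards, as you do, is only a cosmetic reordering. There is, however, one step you assert that is false as stated, and it is precisely the step you identify as the crux. The flip $x\mapsto-x$ does give $S_{\lambda}\overset{\mathrm{d}}{=}-S_{\lambda}$ (for every $\beta_{\lambda}$, incidentally; non-negativity is irrelevant here), but it does \emph{not} give $\chi_{\lambda}\overset{\mathrm{d}}{=}-\chi_{\lambda}$, because Definition \ref{def:council-vote} breaks the tie $S_{\lambda}=0$ in favour of $-1$. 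Hence
\[
\IE_{\beta_{\lambda},N_{\lambda}}\chi_{\lambda}=\IP\left\{ S_{\lambda}>0\right\} -\IP\left\{ S_{\lambda}\leq0\right\} =-\IP\left\{ S_{\lambda}=0\right\} ,
\]
which is strictly negative whenever $N_{\lambda}$ is even. The cross terms $\IE\left(S_{\mu}\chi_{\lambda}\right)$, $\mu\neq\lambda$, do vanish, since $\IE S_{\mu}=0$ exactly; but $\IE\left(\chi_{\lambda}\chi_{\mu}\right)=\IE\chi_{\lambda}\,\IE\chi_{\mu}=\IP\left\{ S_{\lambda}=0\right\} \IP\left\{ S_{\mu}=0\right\} $ does not, so if two or more groups have even size the objective fails to decouple and the minimiser solves the coupled system $\sum_{\mu}\IE\left(\chi_{\lambda}\chi_{\mu}\right)w_{\mu}=\IE\left|S_{\lambda}\right|$ rather than $w_{\lambda}=\IE\left|S_{\lambda}\right|$. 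Your argument (and the stated formula) is therefore complete only when at most one $N_{\lambda}$ is even, in particular when all group sizes are odd. To be fair, the paper's proof relies on exactly the same unproved cancellation $\sum_{\nu}w_{\nu}\IE\chi_{\nu}\chi_{\lambda}=w_{\lambda}$ without comment, so this is a gap you share with the published argument rather than one you introduce; still, since you single this cancellation out as the non-routine step, you should either assume odd $N_{\lambda}$, adopt a sign convention with $\chi_{\lambda}=0$ on ties (which instead changes $\IE\chi_{\lambda}^{2}$), or carry the $\IP\left\{ S_{\lambda}=0\right\} $ corrections through.
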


\begin{proof}
This result was first proved in \cite{Ki2007}. For the reader's convenience,
we present a short proof here.

We find the minimum of the expression defining the democracy deficit
by deriving\\
$\IE_{\boldsymbol{\beta},\boldsymbol{N}}\left[S-\sum_{\lambda=1}^{M}w_{\lambda}\chi_{\lambda}\right]^{2}$
with respect to each $w_{\lambda}$, $\lambda\in\IN_{M}$, and equating
each derivative to 0:
\[
\IE_{\boldsymbol{\beta},\boldsymbol{N}}\left[\left(\bar{S}-\sum_{\nu=1}^{M}w_{\nu}\chi_{\nu}\right)\chi_{\lambda}\right]=0,
\]
which is equivalent to
\[
\IE_{\boldsymbol{\beta},\boldsymbol{N}}\,\bar{S}\,\chi_{\lambda}=\sum_{\nu=1}^{M}w_{\nu}\IE_{\boldsymbol{\beta},\boldsymbol{N}}\chi_{\nu}\chi_{\lambda}.
\]
Due to Definitions \ref{def:CWM} and \ref{def:council-vote}, we
have
\begin{align*}
\IE_{\boldsymbol{\beta},\boldsymbol{N}}\,\bar{S}\,\chi_{\lambda} & =\sum_{\nu=1}^{M}\IE_{\boldsymbol{\beta},\boldsymbol{N}}\,S_{\nu}\,\chi_{\lambda}\\
 & =\IE_{\beta_{\lambda},N_{\lambda}}\,S_{\lambda}\,\chi_{\lambda}\\
 & =\IE_{\beta_{\lambda},N_{\lambda}}\left|S_{\lambda}\right|
\end{align*}
and
\begin{align*}
\sum_{\nu=1}^{M}w_{\nu}\,\IE_{\boldsymbol{\beta},\boldsymbol{N}}S_{\nu}\chi_{\lambda} & =w_{\lambda}\,\IE_{\beta_{\lambda},N_{\lambda}}\chi_{\lambda}^{2}\\
 & =w_{\lambda}.
\end{align*}
The optimality condition is therefore
\[
w_{\lambda}=\IE_{\beta_{\lambda},N_{\lambda}}\left|S_{\lambda}\right|,\quad\lambda\in\IN_{M}.
\]
\end{proof}
We will now show that the expectation $\IE_{\beta_{\lambda},N_{\lambda}}\left|S\right|$
of the voting margin of a single group is strictly increasing in the
respective coupling constant $\beta_{\lambda}$ (cf. Proposition \ref{prop:ES2_fin}).
The following is an auxiliary lemma for this purpose.
\begin{lem}
\label{lem:expect_ineq}Let $X$ and $Y$ be random variables that
take values in a countable set $E\subset\IR$. Let $c\in\IR$ and
define the sets $A\coloneq E\cap\left(-\infty,c\right)$, $B\coloneq E\cap\left[c,\infty\right)$.
Assume that $A,B\neq\emptyset$ and
\begin{align*}
\IP\left\{ X=x\right\}  & >\IP\left\{ Y=x\right\} ,\quad x\in A,\\
\IP\left\{ X=x\right\}  & <\IP\left\{ Y=x\right\} ,\quad x\in B.
\end{align*}
Then $\IE\,Y>\IE\,X$ holds.
\end{lem}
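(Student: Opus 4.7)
Proof plan for Lemma \ref{lem:expect_ineq}.

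The plan is to write the difference $\IE Y-\IE X$ as a single signed sum over $E$ and exploit that the total signed mass is zero. Introduce $\delta_x\coloneq\IP\{Y=x\}-\IP\{X=x\}$ for $x\in E$. The hypotheses become $\delta_x<0$ for $x\in A$ and $\delta_x>0$ for $x\in B$, and since both $(\IP\{X=x\})_{x\in E}$ and $(\IP\{Y=x\})_{x\in E}$ are probability mass functions on $E$, the identity $\sum_{x\in E}\delta_x=0$ holds, so
\[
\sum_{x\in A}\delta_x=-\sum_{x\in B}\delta_x.
\]

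Next I would express the difference of expectations as
\[
\IE Y-\IE X=\sum_{x\in E}x\,\delta_x=\sum_{x\in A}x\,\delta_x+\sum_{x\in B}x\,\delta_x,
\]
and compare each summand to the corresponding multiple of $c$. For $x\in A$ we have $x<c$ strictly and $\delta_x<0$, so $(c-x)(-\delta_x)>0$, i.e.\! $x\,\delta_x>c\,\delta_x$ with strict inequality for every $x\in A$. For $x\in B$ we have $x\geq c$ and $\delta_x>0$, so $x\,\delta_x\geq c\,\delta_x$. Summing (and using that $A\neq\emptyset$ guarantees at least one strict term on the $A$ side) gives
\[
\IE Y-\IE X>c\sum_{x\in A}\delta_x+c\sum_{x\in B}\delta_x=c\cdot 0=0.
\]

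The argument is essentially a one-line rearrangement once the correct decomposition is set up, so there is no substantive obstacle. The only point requiring a little care is preserving the strict inequality: this is handled by observing that $A\neq\emptyset$ combined with the strict inequality $\delta_x<0$ on $A$ (which is built into the hypothesis, unlike the weak inequality available on $B$) forces at least one summand in the $A$-sum to be strictly larger than $c\,\delta_x$. Convergence of the sums is not an issue in our intended applications, where $X$ and $Y$ are bounded and $E$ is finite, so the reordering of the countable sum is legitimate.
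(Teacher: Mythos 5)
Your proof is correct and follows essentially the same route as the paper's: the paper introduces the constant $t\coloneq\IP\left\{ X\in A\right\} -\IP\left\{ Y\in A\right\} =\IP\left\{ Y\in B\right\} -\IP\left\{ X\in B\right\} $, which is exactly your $-\sum_{x\in A}\delta_{x}=\sum_{x\in B}\delta_{x}$, and then bounds the $A$-sum strictly below by $-ct$ and the $B$-sum below by $ct$, matching your comparison of $x\,\delta_{x}$ with $c\,\delta_{x}$ term by term. The handling of strictness (strict on $A$, weak on $B$) is also identical.
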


\begin{proof}
We define the constant
\begin{equation}
t\coloneq\IP\left\{ X\in A\right\} -\IP\left\{ Y\in A\right\} .\label{eq:t}
\end{equation}
The constant $t$ can also be expressed as
\begin{equation}
t=1-\IP\left\{ X\in B\right\} -\left(1-\IP\left\{ Y\in B\right\} \right)=\IP\left\{ Y\in B\right\} -\IP\left\{ X\in B\right\} .\label{eq:t2}
\end{equation}
We write
\begin{align}
\IE\,Y-\IE\,X & =\sum_{x\in E}x\left(\IP\left\{ Y=x\right\} -\IP\left\{ X=x\right\} \right)\nonumber \\
 & =\sum_{x\in A}x\left(\IP\left\{ Y=x\right\} -\IP\left\{ X=x\right\} \right)+\sum_{x\in B}x\left(\IP\left\{ Y=x\right\} -\IP\left\{ X=x\right\} \right).\label{eq:diff_expec_sum}
\end{align}
For all $x\in A$, $x<c$ and $\IP\left\{ Y=x\right\} -\IP\left\{ X=x\right\} <0$
hold. Thus, for the first summand in (\ref{eq:diff_expec_sum}), we
have the lower bound
\begin{align}
\sum_{x\in A}x\left(\IP\left\{ Y=x\right\} -\IP\left\{ X=x\right\} \right) & >c\sum_{x\in A}\left(\IP\left\{ Y=x\right\} -\IP\left\{ X=x\right\} \right)\nonumber \\
 & =c\left(\IP\left\{ Y\in A\right\} -\IP\left\{ X\in A\right\} \right)\nonumber \\
 & =-ct,\label{eq:LB1}
\end{align}
where in the last step we used (\ref{eq:t}).

For all $x\in B$, $x\geq c$ and $\IP\left\{ Y=x\right\} -\IP\left\{ X=x\right\} >0$
hold. Thus, for the second summand in (\ref{eq:diff_expec_sum}),
we have the bound
\begin{align}
\sum_{x\in B}x\left(\IP\left\{ Y=x\right\} -\IP\left\{ X=x\right\} \right) & \geq c\sum_{x\in B}\left(\IP\left\{ Y=x\right\} -\IP\left\{ X=x\right\} \right)\nonumber \\
 & =c\left(\IP\left\{ Y\in B\right\} -\IP\left\{ X\in B\right\} \right)\nonumber \\
 & =ct,\label{eq:LB2}
\end{align}
where in the last step we used (\ref{eq:t2}).

Combining the lower bounds in (\ref{eq:LB1}) and (\ref{eq:LB2})
yields the claim due to (\ref{eq:diff_expec_sum}).
\end{proof}
\begin{prop}
\label{prop:E|S|_increasing}For fixed $N\in\IN$, the function $\beta\in\IR\mapsto\IE_{\beta,N}\left|S\right|\in\IR$
is strictly increasing and continuous.
\end{prop}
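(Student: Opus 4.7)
The plan is to handle continuity by an immediate observation and then to extract strict monotonicity from Lemma \ref{lem:expect_ineq}. The first step is to write the marginal law of $\left|S\right|$ under $\IP_{\beta,N}$ as
\[
\IP_{\beta,N}\left\{\left|S\right|=k\right\}=\frac{a_{k}\exp\left(\beta k^{2}/(2N)\right)}{Z_{\beta,N}},\qquad k\in E,
\]
where $a_{k}\coloneq\left|\left\{x\in\Omega_{N}:\left|s(x)\right|=k\right\}\right|>0$ and $E\subset\{\kappa,\kappa+2,\ldots,N\}$ is the finite set of values of $\left|S\right|$. Since each term of $\IE_{\beta,N}\left|S\right|=\sum_{k\in E}k\,\IP_{\beta,N}\{\left|S\right|=k\}$ is analytic in $\beta$, continuity (in fact smoothness) of $\beta\mapsto\IE_{\beta,N}\left|S\right|$ on $\IR$ follows at once.

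For strict monotonicity, I would fix $\beta_{1}<\beta_{2}$, set $X\coloneq\left|S\right|$ under $\IP_{\beta_{1},N}$ and $Y\coloneq\left|S\right|$ under $\IP_{\beta_{2},N}$, and prepare the hypotheses of Lemma \ref{lem:expect_ineq}. The key observation is that the ratio
\[
\frac{\IP_{\beta_{1},N}\{\left|S\right|=k\}}{\IP_{\beta_{2},N}\{\left|S\right|=k\}}=\frac{Z_{\beta_{2},N}}{Z_{\beta_{1},N}}\exp\!\left(\frac{(\beta_{1}-\beta_{2})k^{2}}{2N}\right)
\]
is strictly decreasing in $k\geq 0$, because $\beta_{1}-\beta_{2}<0$. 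A direct differentiation of $Z_{\beta,N}$ in $\beta$ shows it is strictly increasing, so the ratio at $k=\kappa$ exceeds $1$. Differentiating $\beta\mapsto Z_{\beta,N}\exp(-\beta N/2)$ and noting that the resulting weights $s(x)^{2}-N^{2}$ are non-positive and strictly negative whenever $x\neq\pm u$ (which is possible when $N\geq 2$) shows this product is strictly decreasing in $\beta$, which forces the ratio at $k=N$ to lie strictly below $1$. Hence there is a unique threshold $c\in(\kappa,N)$ at which the ratio equals $1$, and for $k\in E$ with $k<c$ one has $\IP\{X=k\}>\IP\{Y=k\}$, while for $k\in E$ with $k>c$ the reverse strict inequality holds. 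Invoking Lemma \ref{lem:expect_ineq} with this $c$ then yields $\IE\,Y>\IE\,X$, i.e.\ $\IE_{\beta_{2},N}\left|S\right|>\IE_{\beta_{1},N}\left|S\right|$.

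The main technical nuisance will be the possibility that $c$ coincides with some $k^{\ast}\in E$, in which case $\IP\{X=k^{\ast}\}=\IP\{Y=k^{\ast}\}$ and the strict inequality required by Lemma \ref{lem:expect_ineq} fails at $k^{\ast}$. Inspection of the lemma's proof reveals, however, that strictness on only one of the two sides of $c$ suffices to secure a strict overall inequality, so perturbing $c$ by an arbitrarily small amount in order to place $k^{\ast}$ on the non-strict side resolves the matter. The degenerate case $N=1$ (where $\left|S\right|\equiv 1$) is excluded implicitly, since here the expectation is constant.
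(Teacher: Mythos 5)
Your argument is correct, but it takes a genuinely different route from the paper's. The paper differentiates $p_{\beta}\left(x\right)/Z_{\beta,N}$ in $\beta$, observes that the sign of the derivative is governed by $s\left(x\right)^{2}$ versus $\IE_{\beta,N}S^{2}$, and -- because that threshold moves with $\beta$ -- must partition $\IR$ into intervals $B_{i}$ on which the crossing set is constant, apply the fundamental theorem of calculus and Lemma \ref{lem:expect_ineq} on each interval, and then chain the inequalities across interval boundaries by a limiting argument. You instead exploit the explicit form of the likelihood ratio $\IP_{\beta_{1},N}\left\{ \left|S\right|=k\right\} /\IP_{\beta_{2},N}\left\{ \left|S\right|=k\right\} =\left(Z_{\beta_{2},N}/Z_{\beta_{1},N}\right)\exp\left(\left(\beta_{1}-\beta_{2}\right)k^{2}/\left(2N\right)\right)$, which is strictly decreasing in $k$; together with the two endpoint estimates this gives a single crossing point $c$ valid for the whole pair $\beta_{1}<\beta_{2}$ at once, so Lemma \ref{lem:expect_ineq} applies globally with no interval decomposition and no chaining. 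This is the classical monotone-likelihood-ratio argument for exponential families; it is shorter, avoids the delicate boundary bookkeeping of the paper's proof, and in fact establishes first-order stochastic dominance of $\left|S\right|$ under $\IP_{\beta_{2},N}$ over $\IP_{\beta_{1},N}$, hence monotonicity of $\IE_{\beta,N}\,g\left(\left|S\right|\right)$ for every increasing $g$, not just $g=\textup{id}$.

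Two small points of write-up, neither a real gap. First, at the endpoint $k=\kappa$ the bare fact that $Z_{\beta,N}$ is strictly increasing only settles the case $\kappa=0$; for odd $N$ (where $\kappa=1$) you need the slightly stronger statement that $\beta\mapsto Z_{\beta,N}\exp\left(-\beta\kappa^{2}/\left(2N\right)\right)$ is strictly increasing, which follows from $\IE_{\beta,N}S^{2}>\kappa^{2}$ exactly as in your treatment of the endpoint $k=N$. Second, in the edge case $c=k^{*}\in E$ note that $k^{*}$ already lies in $B=E\cap\left[c,\infty\right)$, which is the side on which the lemma's proof only uses the non-strict bound $\geq ct$; so it suffices to observe that the hypothesis on $B$ may be relaxed to a non-strict inequality (keeping strictness and non-emptiness on the $A$ side), and no perturbation of $c$ is actually needed -- a perturbation upward would in fact move $k^{*}$ to the strict side and break the argument. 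Your exclusion of $N=1$ is appropriate: there $\left|S\right|\equiv1$ and the statement is vacuous as a monotonicity claim.
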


\begin{proof}
The continuity is immediate. We show that $\beta\mapsto\IE_{\beta,N}\left|S\right|$
is strictly increasing. For this purpose, we calculate the derivative
of $p_{\beta}\left(x\right)/Z_{\beta,N}$ for any $x\in\Omega_{N}$
employing Lemma \ref{lem:Z_deriv}. Recall that $p_{\beta}\left(x\right)$
equals $\exp\left(\frac{\beta}{2N}\left(\sum_{i=1}^{N}x_{i}\right)^{2}\right)$
and $s\left(x\right)$ stands for $\sum_{i=1}^{N}x_{i}$ for all $x\in\Omega_{N}$.
\begin{align}
\frac{\textup{d}}{\textup{d}\beta}\left(\frac{p_{\beta}\left(x\right)}{Z_{\beta,N}}\right) & =\frac{\frac{\textup{d}p_{\beta}\left(x\right)}{\textup{d}\beta}Z_{\beta,N}-p_{\beta}\left(x\right)\frac{\textup{d}Z_{\beta,N}}{\textup{d}\beta}}{Z_{\beta,N}^{2}}\nonumber \\
 & =\frac{\frac{s\left(x\right)^{2}}{2N}p_{\beta}\left(x\right)}{Z_{\beta,N}}-\frac{p_{\beta}\left(x\right)\frac{Z_{\beta,N}}{2N}\IE_{\beta,N}S^{2}}{Z_{\beta,N}^{2}}\nonumber \\
 & =\frac{p_{\beta}\left(x\right)}{2NZ_{\beta,N}}\left(s\left(x\right)^{2}-\IE_{\beta,N}S^{2}\right).\label{eq:px_Z_deriv}
\end{align}
We note that the derivative is positive if and only if
\begin{equation}
s\left(x\right)^{2}>\IE_{\beta,N}S^{2}.\label{eq:cond_pos_deriv}
\end{equation}
We define the set
\[
G\coloneq\left\{ \ell^{2}\,|\,\ell\in\IN,\;\ell=N\!\!\!\!\mod 2,\;m<\ell\leq N\right\} ,
\]
and let
\[
G=\left\{ g_{1},\ldots,g_{\left|G\right|}\right\} 
\]
be an enumeration of $G$ in ascending order. By Proposition \ref{prop:ES2_fin},
$\beta\in\IR\mapsto\IE_{\beta,N}S^{2}\in\left(\kappa,N^{2}\right)$
is continuous and strictly increasing. Hence, the function is injective.
By Lemma \ref{lem:ES2_outside}, it is surjective.

We define the constants $b_{i}\in\IR\cup\left\{ \infty\right\} $,
$i\in\IN_{\left|G\right|}$, by the condition
\[
\IE_{b_{i},N}S^{2}=g_{i}.
\]
Note that $\left(b_{i}\right)_{i\in\IN_{\left|G\right|}}$ is a strictly
increasing (since $\left(g_{i}\right)_{i\in\IN_{\left|G\right|}}$
is strictly increasing) finite sequence, and $b_{\left|G\right|}=\infty$
due to $g_{\left|G\right|}=N^{2}$ and $\lim_{\beta\rightarrow\infty}\IE_{\beta,N}S^{2}=N^{2}$
by Lemma \ref{lem:ES2_outside}. Using these constants, we define
the sets
\begin{align*}
B_{1} & \coloneq\left(-\infty,b_{1}\right),\\
B_{i} & \coloneq\left[b_{i-1},b_{i}\right),\quad i\in\IN_{\left|G\right|}\backslash\left\{ 1\right\} .
\end{align*}
Due to the bijectivity of $\beta\in\IR\mapsto\IE_{\beta,N}S^{2}\in\left(\kappa,N^{2}\right)$,
$B_{1},\ldots,B_{\left|G\right|}$ is a partition of $\IR$.

With these preparations, we first show that for all $\beta_{1}<\beta_{2}$
with $\beta_{1},\beta_{2}\in B_{i}$ for some $i\in\IN_{\left|G\right|}$,
$\IE_{\beta_{1},N}\left|S\right|<\IE_{\beta_{2},N}\left|S\right|$
holds. We define the following subsets of $\Omega_{N}$:
\begin{align*}
A_{r} & \coloneq\left\{ x\in\Omega_{N}\,\left|\,s\left(x\right)^{2}\leq\IE_{\beta_{r},N}S^{2}\right.\right\} ,\quad r=1,2,
\end{align*}
and write $A^{c}$ for the complement of any subset $A$ of $\Omega_{N}$.
By the definition of the set $G$ and the sets $B_{j}$, $j\in\IN_{\left|G\right|}$,
the equality $A_{1}=A_{2}$ is satisfied. We set
\[
A\coloneq A_{1}.
\]
We use the derivatives of $p_{\beta}\left(x\right)/Z_{\beta,N}$
in (\ref{eq:px_Z_deriv}) and the positivity condition (\ref{eq:cond_pos_deriv})
for said derivatives.

Since $\beta\in\IR\mapsto\IE_{\beta,N}S^{2}\in\left(m,N^{2}\right)$
is strictly increasing, the sign of the derivative of $p_{\beta}\left(\cdot\right)/Z_{\beta,N}$,
for any $\beta\in\left(\beta_{1},\beta_{2}\right)$ and any $x\in\Omega_{N}$
is
\begin{align*}
\frac{p_{\beta}\left(x\right)}{2NZ_{\beta,N}}\left(s\left(x\right)^{2}-\IE_{\beta,N}S^{2}\right) & <0,\quad x\in A,\\
\frac{p_{\beta}\left(x\right)}{2NZ_{\beta,N}}\left(s\left(x\right)^{2}-\IE_{\beta,N}S^{2}\right) & >0,\quad x\in A^{c}.
\end{align*}
These signs and the fundamental theorem of calculus (note that the
derivatives of $p_{\beta}\left(x\right)/Z_{\beta,N}$ are continuous)
yield for all $x\in A$
\begin{align*}
\frac{p_{\beta_{2}}\left(x\right)}{Z_{\beta_{2},N}} & =\frac{p_{\beta_{1}}\left(x\right)}{Z_{\beta_{1},N}}+\int_{\beta_{1}}^{\beta_{2}}\frac{\textup{d}}{\textup{d}\beta}\left(\frac{p_{\beta}\left(x\right)}{Z_{\beta,N}}\right)\,\textup{d}\beta<\frac{p_{\beta_{1}}\left(x\right)}{Z_{\beta_{1},N}}
\end{align*}
and for all $x\in A^{c}$
\begin{align*}
\frac{p_{\beta_{2}}\left(x\right)}{Z_{\beta_{2},N}} & =\frac{p_{\beta_{1}}\left(x\right)}{Z_{\beta_{1},N}}+\int_{\beta_{1}}^{\beta_{2}}\frac{\textup{d}}{\textup{d}\beta}\left(\frac{p_{\beta}\left(x\right)}{Z_{\beta,N}}\right)\,\textup{d}\beta>\frac{p_{\beta_{1}}\left(x\right)}{Z_{\beta_{1},N}}.
\end{align*}
We apply Lemma \ref{lem:expect_ineq} with $X$ being $\left|S\right|$
following the distribution $\IP_{\beta_{1},N}\circ\left|S\right|^{-1}$
and $Y$ being $\left|S\right|$ following the distribution $\IP_{\beta_{2},N}\circ\left|S\right|^{-1}$.
Then the statement $\IE_{\beta_{1},N}\left|S\right|<\IE_{\beta_{2},N}\left|S\right|$
follows by Lemma \ref{lem:expect_ineq}.

Now assume that $\beta_{1},\beta_{2}\in\IR$, $\beta_{1}<\beta_{2}$,
and $\beta_{1}\in B_{j_{1}}$, $\beta_{2}\in B_{j_{2}}$ for some
$j_{1}<j_{2}\in\IN_{\left|G\right|}$. Since $\beta\mapsto\IE_{\beta,N}\left|S\right|$
is continuous and strictly increasing on each $B_{i}$, $i\in\IN_{\left|G\right|}$,
we obtain for all $\beta\in B_{i}$
\[
\IE_{b_{i},N}\left|S\right|=\lim_{\beta'\nearrow b_{i}}\IE_{\beta',N}\left|S\right|>\IE_{\beta,N}\left|S\right|
\]
and
\[
\IE_{b_{i+1},N}\left|S\right|=g_{i+1}>g_{i}=\IE_{b_{i},N}\left|S\right|.
\]
Taking into account $\beta_{1}\in B_{j_{1}}=\left[b_{j_{1}-1},b_{j_{1}}\right)$,
$\beta_{2}\in\left[b_{j_{2}-1},b_{j_{2}}\right)$, and $j_{2}-1\geq j_{1}$,
we therefore have
\[
\IE_{\beta_{2},N}\left|S\right|\geq\IE_{b_{j_{2}-1},N}\left|S\right|\geq\IE_{b_{j_{1}},N}\left|S\right|>\IE_{\beta_{1},N}\left|S\right|,
\]
and we have proved that the function $\beta\mapsto\IE_{\beta,N}\left|S\right|$
is strictly increasing.
\end{proof}
By Propositions \ref{prop:opt_weights} and \ref{prop:E|S|_increasing},
we have
\begin{cor}
Let $\lambda,\mu\in\IN_{M}$, $N_{\lambda}=N_{\mu}$, and $0\leq\beta_{\lambda}<\beta_{\mu}$.
Then the optimal weights $w_{\lambda}$ and $w_{\mu}$ satisfy the
inequality
\[
w_{\lambda}<w_{\mu}.
\]
\end{cor}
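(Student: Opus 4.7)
The plan is to combine the two propositions that immediately precede the corollary. By Proposition \ref{prop:opt_weights}, the optimal weight assigned to group $\lambda$ is $w_{\lambda}=\IE_{\beta_{\lambda},N_{\lambda}}\left|S_{\lambda}\right|$, and similarly $w_{\mu}=\IE_{\beta_{\mu},N_{\mu}}\left|S_{\mu}\right|$. Since by hypothesis $N_{\lambda}=N_{\mu}$, both expectations are evaluated in single-group Curie--Weiss models of the same population size, and they differ only in the coupling parameter.

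The second ingredient is Proposition \ref{prop:E|S|_increasing}, which states that for every fixed $N\in\IN$ the map $\beta\in\IR\mapsto\IE_{\beta,N}\left|S\right|$ is strictly increasing. Applying this with $N\coloneq N_{\lambda}=N_{\mu}$ and using $0\leq\beta_{\lambda}<\beta_{\mu}$, we obtain
\[
w_{\lambda}=\IE_{\beta_{\lambda},N}\left|S\right|<\IE_{\beta_{\mu},N}\left|S\right|=w_{\mu},
\]
which is the desired inequality.

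There is essentially no obstacle here beyond invoking the right results in the right order; all the analytic work has already been carried out in Proposition \ref{prop:E|S|_increasing}. The only small point to verify when writing the proof is notational: we use $S$ (without subindex) for the single-group voting margin, consistently with the convention adopted earlier in Section \ref{sec:Proof_props} of dropping the index $\lambda$ when working within one group, so that $\IE_{\beta_{\lambda},N_{\lambda}}\left|S_{\lambda}\right|=\IE_{\beta_{\lambda},N}\left|S\right|$ under the marginal distribution $\IP_{\beta_{\lambda},N_{\lambda}}$.
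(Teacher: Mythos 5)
Your proof is correct and is exactly the paper's argument: the corollary is stated there as an immediate consequence of Propositions \ref{prop:opt_weights} and \ref{prop:E|S|_increasing}, combining the formula $w_{\lambda}=\IE_{\beta_{\lambda},N_{\lambda}}\left|S_{\lambda}\right|$ with the strict monotonicity of $\beta\mapsto\IE_{\beta,N}\left|S\right|$ for fixed $N$. Nothing further is needed.
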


This implies that among groups of equal sizes, those groups with stronger
interactions between voters, that is groups with a higher parameter
$\beta$, will receive a larger weight in the council than groups
with voters who interact more loosely with each other, i.e.\! groups
that that have a lower parameter $\beta$. Put in a different way,
under the CWM as a voting model, there are two avenues for groups
to obtain a higher voting weight under the democracy deficit criterion:
have a larger population or be more cohesive in the group votes.

Proposition \ref{prop:opt_weights} yields an estimator for the optimal
weights of each group by substituting any estimator for the parameter
$\beta_{\lambda}$. For example, an estimator for the optimal weights
based on the estimator $\hat{\boldsymbol{\beta}}_{\boldsymbol{N}}$
from Definition \ref{def:estimator_fin_N} can be defined as follows.
Let for all $\lambda\in\IN_{M}$, $\hat{\beta}_{N_{\lambda}}\left(\lambda\right)\coloneq\left(\hat{\boldsymbol{\beta}}_{\boldsymbol{N}}\right)_{\lambda}$.
Then, $\hat{\beta}_{N_{\lambda}}\left(\lambda\right):\Omega_{N_{\lambda}}^{n}\rightarrow\left[-\infty,\infty\right]$
is an estimator for $\beta_{\lambda}$ calculated for a sample of
observations of voting in group $\lambda$.
\begin{defn}
\label{def:estimator_weights}Let $\lambda\in\IN_{M}$, $\beta_{\lambda}\geq0$.
The estimator $\hat{w}_{\lambda}:\Omega_{N_{\lambda}}^{n}\rightarrow\left[0,\infty\right)$
for the optimal weight of group $\lambda$ based on $\hat{\beta}_{N_{\lambda}}\left(\lambda\right)$
is defined by
\[
\hat{w}_{\lambda}=\IE_{\hat{\beta}_{N_{\lambda}}\left(\lambda\right),N_{\lambda}}\left|S_{\lambda}\right|.
\]
\end{defn}

The estimator $\hat{w}_{\lambda}$ inherits many of the properties
of $\hat{\boldsymbol{\beta}}_{\boldsymbol{N}}$. This is the subject
of the next theorem. Recall the good rate function $\boldsymbol{J}$
defined in (\ref{eq:bold_J}), and the single-group rate function
$J_{\lambda}:\left[-\infty,\infty\right]\rightarrow\left[0,\infty\right]$
in (\ref{eqJ}) for group $\lambda$. We define the good rate function
$H_{\lambda}:\IR\rightarrow\left[0,\infty\right]$ by
\[
H_{\lambda}\left(y\right)\coloneq\inf\left\{ J_{\lambda}\left(\beta\right)\,|\,\beta\in\left[-\infty,\infty\right],\IE_{\beta,N_{\lambda}}\left|S_{\lambda}\right|=y\right\} ,\quad y\in\IR.
\]

\begin{thm}
Let $\lambda\in\IN_{M}$ and $N_{\lambda}\in\IN$. Let $w_{\lambda}$
be the optimal weight from Proposition \ref{prop:opt_weights}. Then
the following statements hold:
\begin{enumerate}
\item $\hat{w}_{\lambda}\xrightarrow[n\rightarrow\infty]{\textup{p}}w_{\lambda}$.
\item $\sqrt{n}\left(\hat{w}_{\lambda}-w_{\lambda}\right)\xrightarrow[n\rightarrow\infty]{\textup{d}}\mathcal{N}\left(0,\upsilon^{2}\right)$
and
\[
\upsilon^{2}=\frac{\left(\IE_{\beta,N_{\lambda}}\left|S_{\lambda}\right|^{3}-\IE_{\beta,N_{\lambda}}\left|S_{\lambda}\right|\IE_{\beta,N_{\lambda}}S_{\lambda}^{2}\right)^{2}}{\IV_{\beta,N}S^{2}}.
\]
\item $\hat{w}_{\lambda}$ satisfies a large deviations principle with rate
$n$ and rate function $H_{\lambda}$. $H_{\lambda}$ has a unique
minimum at $w_{\lambda}$, and we have for each closed set $K\subset\IR$
that does not contain $w_{\lambda}$, $\inf_{y\in K}H_{\lambda}\left(y\right)>0$
and
\[
\IP\left\{ \hat{w}_{N}\in K\right\} \leq2\exp\left(-n\inf_{y\in K}H_{\lambda}\left(y\right)\right).
\]
\end{enumerate}
\end{thm}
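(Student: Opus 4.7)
The proof rests on recognising that $\hat{w}_{\lambda} = \varphi_{\lambda}(\hat{\beta}_{N_{\lambda}}(\lambda))$, where $\varphi_{\lambda}:[-\infty,\infty]\to\IR$ is defined by $\varphi_{\lambda}(\beta) \coloneq \IE_{\beta,N_{\lambda}}|S_{\lambda}|$ on $\IR$ and extended to the compactification by the appropriate one-sided limits (which exist by an argument entirely analogous to Lemma \ref{lem:ES2_outside}). By Proposition \ref{prop:E|S|_increasing} together with these limits, $\varphi_{\lambda}$ is continuous and strictly increasing, hence injective, on $[-\infty,\infty]$. A calculation paralleling the derivation of $\vartheta'_{N}$ in Proposition \ref{prop:ES2_fin}, using Lemma \ref{lem:Z_deriv} and the identity $|S_{\lambda}|S_{\lambda}^{2}=|S_{\lambda}|^{3}$, shows that $\varphi_{\lambda}$ is continuously differentiable on $\IR$ with
\[
\varphi_{\lambda}'(\beta) \;=\; \frac{1}{2N_{\lambda}}\bigl(\IE_{\beta,N_{\lambda}}|S_{\lambda}|^{3} - \IE_{\beta,N_{\lambda}}|S_{\lambda}|\cdot \IE_{\beta,N_{\lambda}}S_{\lambda}^{2}\bigr) \;>\; 0,
\]
the positivity following from $\mathrm{Cov}(|S_{\lambda}|,|S_{\lambda}|^{2})>0$. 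The three statements of the theorem then follow by pushing the corresponding statements from Theorem \ref{thm:properties_bML_fin} about $\hat{\beta}_{N_{\lambda}}(\lambda)$ through $\varphi_{\lambda}$.

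Statement 1 follows immediately from Theorem \ref{thm:properties_bML_fin}(1) and the continuity of $\varphi_{\lambda}$ at $\beta_{\lambda}$ via the continuous mapping theorem (Theorem \ref{thm:cont_mapping}). For statement 2, I would apply the delta method (Theorem \ref{thm:delta_method}) to the asymptotically normal sequence furnished by Theorem \ref{thm:properties_bML_fin}(2). Since $\hat{\beta}_{N_{\lambda}}(\lambda)$ may assume $\pm\infty$ with positive probability, the delta method cannot be invoked on the raw sequence; I would mimic the truncation technique from the paper's proof of Theorem \ref{thm:properties_bML_fin}(2), restricting to a compact neighbourhood of $\beta_{\lambda}$ on which $\varphi_{\lambda}$ is continuously differentiable, and then using Lemma \ref{lem:conv_restr_sequence} together with the exponential bound of Proposition \ref{prop:atyp_beta^hat} to dispose of the atypical event. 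The resulting limit variance is $(\varphi_{\lambda}'(\beta_{\lambda}))^{2}\cdot \frac{4N_{\lambda}^{2}}{\IV_{\beta_{\lambda},N_{\lambda}}S_{\lambda}^{2}}$, which, after substituting the formula above for $\varphi_{\lambda}'$, simplifies exactly to the stated $\upsilon^{2}$.

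For statement 3, I would invoke the contraction principle (Theorem \ref{thm:contr_princ}) with the continuous function $\varphi_{\lambda}:[-\infty,\infty]\to\IR$ and the LDP for $\hat{\beta}_{N_{\lambda}}(\lambda)$ with good rate function $J_{\lambda}$ provided by Theorem \ref{thm:properties_bML_fin}(3). The induced rate function is precisely $H_{\lambda}$ as defined in the statement. Since $\varphi_{\lambda}$ is injective and $J_{\lambda}$ has its unique minimum at $\beta_{\lambda}$, Lemma \ref{lem:contr_princ_min} yields that $H_{\lambda}$ has a unique minimum at $w_{\lambda}=\varphi_{\lambda}(\beta_{\lambda})$. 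For the explicit upper bound, given any closed $K\subset\IR$ with $w_{\lambda}\notin K$, the preimage $\varphi_{\lambda}^{-1}(K)\subset [-\infty,\infty]$ is closed, hence compact, and does not contain $\beta_{\lambda}$, so Theorem \ref{thm:properties_bML_fin}(3) delivers
\[
\IP\{\hat{w}_{\lambda}\in K\} \;=\; \IP\{\hat{\beta}_{N_{\lambda}}(\lambda)\in \varphi_{\lambda}^{-1}(K)\} \;\le\; 2\exp\Bigl(-n\inf_{\beta\in \varphi_{\lambda}^{-1}(K)}J_{\lambda}(\beta)\Bigr) \;=\; 2\exp\Bigl(-n\inf_{y\in K}H_{\lambda}(y)\Bigr),
\]
with positivity of the infimum following from the lower semi-continuity of $J_{\lambda}$ on the compact set $\varphi_{\lambda}^{-1}(K)$. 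The main obstacle will be the bookkeeping in statement 2 required to transport the delta method through the infinite values of $\hat{\beta}_{N_{\lambda}}(\lambda)$, for which the truncation plus Lemma \ref{lem:conv_restr_sequence} device already deployed in the paper's own proof of Theorem \ref{thm:properties_bML_fin}(2) is the natural remedy.
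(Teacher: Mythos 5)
Your proposal is correct and follows exactly the route the paper intends: its entire proof of this theorem is the single remark that it ``is proved in close analogy to Theorem \ref{thm:properties_bML_fin},'' and your argument --- writing $\hat{w}_{\lambda}=\varphi_{\lambda}(\hat{\beta}_{N_{\lambda}}(\lambda))$ with $\varphi_{\lambda}(\beta)=\IE_{\beta,N_{\lambda}}\left|S_{\lambda}\right|$, computing $\varphi_{\lambda}'=\frac{1}{2N_{\lambda}}\mathrm{Cov}(|S_{\lambda}|,S_{\lambda}^{2})$ via (\ref{eq:px_Z_deriv}), and transporting consistency, the truncated delta method, and the contraction principle through $\varphi_{\lambda}$ --- is precisely that analogy carried out, with the variance simplification checking out. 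One small correction: in statement 2 the exponential control of the atypical event should be taken from the large deviations bound for $T$ (statement 4 of Proposition \ref{prop:conv_stat}, as in the paper's proof of Theorem \ref{thm:properties_bML_fin}(2)) rather than from Proposition \ref{prop:atyp_beta^hat}, which only bounds the probability that $\hat{\boldsymbol{\beta}}_{\boldsymbol{N}}\notin\left[0,\infty\right)$ and does not confine $\hat{\beta}_{N_{\lambda}}(\lambda)$ to a compact neighbourhood of $\beta_{\lambda}$.
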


\begin{proof}
This theorem is proved in close analogy to Theorem \ref{thm:properties_bML_fin}.
\end{proof}

\appendix

\section*{Appendix}

We present a number of concepts and auxiliary results we use. Recall
the expression $Z_{\boldsymbol{\beta},\boldsymbol{N}}$ from (\ref{eq:part_fn}).
\begin{lem}
\label{lem:Z_deriv}The first derivative of $Z_{\boldsymbol{\beta},\boldsymbol{N}}$
with respect to $\beta_{\lambda}$, $\lambda\in\IN_{M}$ is
\begin{align*}
\frac{\textup{d}Z_{\boldsymbol{\beta},\boldsymbol{N}}}{\textup{d}\beta_{\lambda}} & =\frac{Z_{\boldsymbol{\beta},\boldsymbol{N}}}{2N_{\lambda}}\,\IE_{\boldsymbol{\beta},\boldsymbol{N}}S_{\lambda}^{2}.
\end{align*}
\end{lem}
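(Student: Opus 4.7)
The plan is to differentiate the partition function term by term. Since $Z_{\boldsymbol{\beta},\boldsymbol{N}}$ is a finite sum (over the $2^{N_{1}+\cdots+N_{M}}$ configurations in $\Omega_{N_{1}+\cdots+N_{M}}$) of smooth functions of the parameter vector $\boldsymbol{\beta}$, differentiation and summation can be exchanged without any delicate justification, so I would simply apply $\partial/\partial\beta_{\lambda}$ inside the sum in the defining expression (\ref{eq:part_fn}).

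Next, for each fixed configuration $x\in\Omega_{N_{1}+\cdots+N_{M}}$, only the $\lambda$-th summand in the exponent depends on $\beta_{\lambda}$, so the chain rule produces a factor
\[
\frac{1}{2N_{\lambda}}\left(\sum_{i=1}^{N_{\lambda}}x_{\lambda i}\right)^{2}
\]
multiplying $\exp\left(\tfrac{1}{2}\sum_{\mu=1}^{M}\tfrac{\beta_{\mu}}{N_{\mu}}(\sum_{i=1}^{N_{\mu}}x_{\mu i})^{2}\right)$. Pulling the constant $\tfrac{1}{2N_{\lambda}}$ out of the sum yields
\[
\frac{\textup{d}Z_{\boldsymbol{\beta},\boldsymbol{N}}}{\textup{d}\beta_{\lambda}}=\frac{1}{2N_{\lambda}}\sum_{x\in\Omega_{N_{1}+\cdots+N_{M}}}\left(\sum_{i=1}^{N_{\lambda}}x_{\lambda i}\right)^{2}\exp\left(\frac{1}{2}\sum_{\mu=1}^{M}\frac{\beta_{\mu}}{N_{\mu}}\left(\sum_{i=1}^{N_{\mu}}x_{\mu i}\right)^{2}\right).
\]

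Finally, I would recognise the right-hand side as $Z_{\boldsymbol{\beta},\boldsymbol{N}}$ times the expectation of $S_{\lambda}^{2}=(\sum_{i=1}^{N_{\lambda}}X_{\lambda i})^{2}$ under the CWM law from Definition \ref{def:CWM}; concretely, multiplying and dividing by $Z_{\boldsymbol{\beta},\boldsymbol{N}}$ turns the weighted sum into $\IE_{\boldsymbol{\beta},\boldsymbol{N}}S_{\lambda}^{2}$, giving exactly $\tfrac{Z_{\boldsymbol{\beta},\boldsymbol{N}}}{2N_{\lambda}}\IE_{\boldsymbol{\beta},\boldsymbol{N}}S_{\lambda}^{2}$. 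There is no real obstacle here, as the state space is finite; the only thing to be careful about is the bookkeeping that ensures all $\mu\neq\lambda$ terms in the exponent are inert under $\partial/\partial\beta_{\lambda}$ and that the factor $\tfrac{1}{2N_{\lambda}}$ is separated cleanly from the squared sum that becomes $S_{\lambda}^{2}$.
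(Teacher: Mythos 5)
Your proposal is correct and follows essentially the same route as the paper: exchange the (finite) sum and the derivative, apply the chain rule to pick up the factor $\frac{1}{2N_{\lambda}}\left(\sum_{i=1}^{N_{\lambda}}x_{\lambda i}\right)^{2}$, and then multiply and divide by $Z_{\boldsymbol{\beta},\boldsymbol{N}}$ to recognise the expectation $\IE_{\boldsymbol{\beta},\boldsymbol{N}}S_{\lambda}^{2}$. Your use of a fresh index $\mu$ in the exponent is in fact a small notational improvement over the paper's reuse of $\lambda$.
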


\begin{proof}
The derivative of the partition function $Z_{\boldsymbol{\beta},\boldsymbol{N}}$
with respect to $\beta_{\lambda}$ is
\begin{align*}
\frac{\textup{d}Z_{\boldsymbol{\beta},\boldsymbol{N}}}{\textup{d}\beta_{\lambda}} & =\sum_{x\in\Omega_{N_{1}+\cdots+N_{M}}}\frac{\textup{d}}{\textup{d}\beta_{\lambda}}\left[\exp\left(\frac{1}{2}\sum_{\lambda=1}^{M}\frac{\beta_{\lambda}}{N_{\lambda}}\left(\sum_{i=1}^{N_{\lambda}}x_{\lambda i}\right)^{2}\right)\right]\\
 & =\sum_{x\in\Omega_{N_{1}+\cdots+N_{M}}}\frac{1}{2N_{\lambda}}\left(\sum_{i=1}^{N_{\lambda}}x_{\lambda i}\right)^{2}\exp\left(\frac{1}{2}\sum_{\lambda=1}^{M}\frac{\beta_{\lambda}}{N_{\lambda}}\left(\sum_{i=1}^{N_{\lambda}}x_{\lambda i}\right)^{2}\right)\\
 & =\frac{Z_{\boldsymbol{\beta},\boldsymbol{N}}}{2N_{\lambda}}\,\IE_{\boldsymbol{\beta},\boldsymbol{N}}S_{\lambda}^{2}.
\end{align*}
\end{proof}
\begin{thm}[Slutsky]
\label{thm:slutsky}Let $\left(Y_{n}\right)_{n\in\IN}$ and $\left(Z_{n}\right)_{n\in\IN}$
be sequences of random variables, $Y$ a random variable, and $a\in\IR$
a constant such that $Y_{n}\xrightarrow[n\rightarrow\infty]{\textup{d}}Y$
and $Z_{n}\xrightarrow[n\rightarrow\infty]{\textup{p}}a$. Then
\[
Y_{n}+Z_{n}\xrightarrow[n\rightarrow\infty]{\textup{d}}Y+a\quad\text{and}\quad Y_{n}Z_{n}\xrightarrow[n\rightarrow\infty]{\textup{d}}aY.
\]
\end{thm}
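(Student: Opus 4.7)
The plan is to deduce both conclusions from a single joint-convergence fact, namely that $(Y_n, Z_n) \xrightarrow[n\rightarrow\infty]{\textup{d}} (Y, a)$ in $\IR^2$, and then apply the continuous mapping theorem to the continuous functions $(y,z) \mapsto y + z$ and $(y,z) \mapsto yz$. Since the second coordinate of the limit is deterministic, the joint convergence should follow from the two marginal assumptions. However, rather than invoking this general fact, I will work directly with distribution functions for the sum and reduce the product to the sum by an algebraic identity; this keeps the argument self-contained.

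First I would prove the statement for the sum. Fix a continuity point $x$ of the distribution function of $Y+a$, i.e.\! a point $x-a$ at which $F_Y$ is continuous. For any $\varepsilon > 0$ such that $x-a+\varepsilon$ and $x-a-\varepsilon$ are continuity points of $F_Y$ (the set of discontinuities is at most countable, so such $\varepsilon$ are dense in any neighbourhood of $0$), I would use the set inclusions
\begin{align*}
\left\{Y_n + Z_n \leq x\right\} & \subseteq \left\{Y_n \leq x - a + \varepsilon\right\} \cup \left\{\left|Z_n - a\right| > \varepsilon\right\}, \\
\left\{Y_n \leq x - a - \varepsilon\right\} & \subseteq \left\{Y_n + Z_n \leq x\right\} \cup \left\{\left|Z_n - a\right| > \varepsilon\right\}.
\end{align*}
Taking $\limsup$ and $\liminf$, together with $\IP\{|Z_n - a| > \varepsilon\} \to 0$ and $F_{Y_n}(y) \to F_Y(y)$ at every continuity point $y$ of $F_Y$, yields
\[
F_Y(x-a-\varepsilon) \leq \liminf_{n \to \infty} \IP\{Y_n + Z_n \leq x\} \leq \limsup_{n \to \infty} \IP\{Y_n + Z_n \leq x\} \leq F_Y(x-a+\varepsilon).
\]
Letting $\varepsilon \to 0$ through continuity points of $F_Y$ and using the assumed continuity of $F_Y$ at $x-a$ gives $\lim_n \IP\{Y_n + Z_n \leq x\} = F_Y(x-a) = \IP\{Y + a \leq x\}$, which establishes $Y_n + Z_n \xrightarrow[n\rightarrow\infty]{\textup{d}} Y + a$.

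For the product I would write $Y_n Z_n = a Y_n + Y_n(Z_n - a)$. The first summand satisfies $aY_n \xrightarrow[n\rightarrow\infty]{\textup{d}} aY$ either trivially (if $a=0$) or because $y \mapsto ay$ is continuous. Applying the sum case proved above, it suffices to show $Y_n(Z_n - a) \xrightarrow[n\rightarrow\infty]{\textup{p}} 0$. Fix $\delta > 0$ and $\eta > 0$. Since $Y_n \xrightarrow{\textup{d}} Y$, the sequence $(Y_n)_{n\in\IN}$ is tight, so there is $M > 0$ with $\limsup_n \IP\{|Y_n| > M\} < \eta$. Using
\[
\IP\left\{\left|Y_n(Z_n - a)\right| > \delta\right\} \leq \IP\left\{\left|Y_n\right| > M\right\} + \IP\left\{\left|Z_n - a\right| > \delta/M\right\},
\]
and letting $n \to \infty$, the second term vanishes by assumption, giving $\limsup_n \IP\{|Y_n(Z_n - a)| > \delta\} \leq \eta$. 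Since $\eta$ is arbitrary, $Y_n(Z_n - a) \xrightarrow[n\rightarrow\infty]{\textup{p}} 0$ follows, and another application of the sum case yields $Y_n Z_n \xrightarrow[n\rightarrow\infty]{\textup{d}} aY$.

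The main obstacle is the product case, specifically ensuring that the unbounded factor $Y_n$ does not spoil the vanishing of $Y_n(Z_n - a)$; this is handled by invoking tightness, which is the key observation without which the argument would collapse. The rest reduces to careful bookkeeping with continuity points of limit distribution functions, which is routine.
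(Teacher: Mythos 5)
Your proof is correct. Note, however, that the paper does not actually prove this theorem: it simply cites Theorems 11.3 and 11.4 of Gut's \emph{Probability: A Graduate Course}, so there is no argument in the paper to compare against step by step. Your self-contained proof is the standard one and all the steps check out: the two set inclusions for the sum case are valid (e.g.\! if $Y_{n}+Z_{n}\leq x$ and $\left|Z_{n}-a\right|\leq\varepsilon$ then $Y_{n}\leq x-a+\varepsilon$), the sandwich $F_{Y}\left(x-a-\varepsilon\right)\leq\liminf_{n}\IP\left\{ Y_{n}+Z_{n}\leq x\right\} \leq\limsup_{n}\IP\left\{ Y_{n}+Z_{n}\leq x\right\} \leq F_{Y}\left(x-a+\varepsilon\right)$ closes correctly because the discontinuity set of $F_{Y}$ is countable and $F_{Y}$ is continuous at $x-a$, and the reduction of the product case via $Y_{n}Z_{n}=aY_{n}+Y_{n}\left(Z_{n}-a\right)$ together with tightness of $\left(Y_{n}\right)_{n\in\IN}$ (which indeed follows from $Y_{n}\xrightarrow{\textup{d}}Y$) is exactly the right way to control the unbounded factor. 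Your closing remark correctly identifies tightness as the essential ingredient; without it the bound $\IP\left\{ \left|Y_{n}\left(Z_{n}-a\right)\right|>\delta\right\} \leq\IP\left\{ \left|Y_{n}\right|>M\right\} +\IP\left\{ \left|Z_{n}-a\right|>\delta/M\right\} $ would be useless. What your approach buys over the paper's is self-containedness at the cost of length; what the citation buys is brevity, which is reasonable for a result this classical.
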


\begin{proof}
These statements are Theorems 11.3 and 11.4 in \cite{Gut2013}.
\end{proof}
\begin{thm}[Continuous Mapping]
\label{thm:cont_mapping}Let $\left(Y_{n}\right)_{n\in\IN}$ be a
sequence of random variables and $Y$ a random variable, each of them
taking values in some subset $A\subset\IR$, such that $Y_{n}\xrightarrow[n\rightarrow\infty]{\textup{p}}Y$,
and let $g:A\rightarrow\IR$ be a continuous function. Then
\[
g\left(Y_{n}\right)\xrightarrow[n\rightarrow\infty]{\textup{p}}g\left(Y\right).
\]
\end{thm}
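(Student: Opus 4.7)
The plan is to view $\hat{w}_{\lambda}$ as the image of $\hat{\beta}_{N_{\lambda}}(\lambda)$ under the deterministic map $g\colon\beta\mapsto\IE_{\beta,N_{\lambda}}\left|S_{\lambda}\right|$, so that $\hat{w}_{\lambda}=g(\hat{\beta}_{N_{\lambda}}(\lambda))$ holds pointwise by Definition \ref{def:estimator_weights}. By Proposition \ref{prop:E|S|_increasing} the function $g$ is continuous and strictly increasing on $\IR$, and a computation analogous to Lemma \ref{lem:ES2_outside} (using Corollaries \ref{cor:lim_beta} and \ref{cor:lim_beta_neg}) shows that $g$ extends continuously to a strictly increasing function on $\left[-\infty,\infty\right]$; in particular $g(\beta_{\lambda})=w_{\lambda}$ by Proposition \ref{prop:opt_weights}. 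The three statements will then be transferred from the three corresponding statements of Theorem \ref{thm:properties_bML_fin} via the continuous mapping theorem, the delta method, and the contraction principle, respectively.

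For statement 1, continuity of $g$ together with $\hat{\beta}_{N_{\lambda}}(\lambda)\xrightarrow[n\rightarrow\infty]{\textup{p}}\beta_{\lambda}$ from Theorem \ref{thm:properties_bML_fin}.1 yields $\hat{w}_{\lambda}\xrightarrow[n\rightarrow\infty]{\textup{p}}w_{\lambda}$ immediately by Theorem \ref{thm:cont_mapping}. For statement 2, I will first compute $g'(\beta)$ by direct differentiation of the finite sum defining $\IE_{\beta,N_{\lambda}}\left|S_{\lambda}\right|$, using Lemma \ref{lem:Z_deriv} and the expression (\ref{eq:px_Z_deriv}) for the derivative of $p_{\beta}(x)/Z_{\beta,N_{\lambda}}$, and using $\left|S_{\lambda}\right|\cdot S_{\lambda}^{2}=\left|S_{\lambda}\right|^{3}$, to obtain
\[
g'(\beta)=\frac{1}{2N_{\lambda}}\left(\IE_{\beta,N_{\lambda}}\left|S_{\lambda}\right|^{3}-\IE_{\beta,N_{\lambda}}\left|S_{\lambda}\right|\cdot\IE_{\beta,N_{\lambda}}S_{\lambda}^{2}\right).
\]
The delta method (Theorem \ref{thm:delta_method}) combined with the asymptotic normality of $\hat{\beta}_{N_{\lambda}}(\lambda)$ from Theorem \ref{thm:properties_bML_fin}.2 then gives the limiting variance $\left(g'(\beta_{\lambda})\right)^{2}\cdot 4N_{\lambda}^{2}/\IV_{\beta_{\lambda},N_{\lambda}}S_{\lambda}^{2}$; the factor $(2N_{\lambda})^{2}$ in the denominator of $g'(\beta_{\lambda})^{2}$ cancels against the $4N_{\lambda}^{2}$, exactly reproducing the stated $\upsilon^{2}$. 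The main obstacle is the same one encountered in the proof of Theorem \ref{thm:properties_bML_fin}.2: the estimator $\hat{\beta}_{N_{\lambda}}(\lambda)$ can take the values $\pm\infty$ with positive probability, so $g$ must be restricted to a compact subinterval of $\IR$ on which $g'$ is continuous and bounded before the delta method applies. This is handled exactly as in that earlier proof by truncating to $\{T\in(a,b)\}$ for $a<\IE_{\beta_{\lambda},N_{\lambda}}S_{\lambda}^{2}<b$, invoking Lemma \ref{lem:conv_restr_sequence} both before and after the $g$-transformation, with Proposition \ref{prop:atyp_T} supplying the $o(1/\sqrt{n})$ decay of the tail probability.

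For statement 3, Theorem \ref{thm:properties_bML_fin}.3 provides an LDP for $\hat{\beta}_{N_{\lambda}}(\lambda)$ with rate $n$ and good rate function $J_{\lambda}$ on $\left[-\infty,\infty\right]$. Applying the contraction principle (Theorem \ref{thm:contr_princ}) with the continuous extension of $g$ immediately yields an LDP for $\hat{w}_{\lambda}=g(\hat{\beta}_{N_{\lambda}}(\lambda))$ with rate $n$ and rate function precisely $H_{\lambda}$ as defined in the statement. The unique minimum of $H_{\lambda}$ at $w_{\lambda}=g(\beta_{\lambda})$ follows from Lemma \ref{lem:contr_princ_min}, since $g$ is injective (being strictly increasing) and $J_{\lambda}$ has its unique minimum at $\beta_{\lambda}$. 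For the explicit upper bound, let $K\subset\IR$ be closed with $w_{\lambda}\notin K$; then $g^{-1}(K)$ is a closed subset of $\left[-\infty,\infty\right]$ that does not contain $\beta_{\lambda}$, and Theorem \ref{thm:properties_bML_fin}.3 (with $M=1$) yields
\[
\IP\left\{\hat{w}_{\lambda}\in K\right\}=\IP\left\{\hat{\beta}_{N_{\lambda}}(\lambda)\in g^{-1}(K)\right\}\leq 2\exp\left(-n\inf_{\beta\in g^{-1}(K)}J_{\lambda}(\beta)\right).
\]
The identity $\inf_{\beta\in g^{-1}(K)}J_{\lambda}(\beta)=\inf_{y\in K}H_{\lambda}(y)$, which is immediate from the definition of $H_{\lambda}$, completes the claim, with strict positivity of this infimum following from Lemma \ref{lem:cumulant_entropy} together with the identification of minimisers just established.
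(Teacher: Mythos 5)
Your proposal does not address the statement it is attached to. The statement to be proved is Theorem \ref{thm:cont_mapping}, the continuous mapping theorem: if $Y_{n}\xrightarrow[n\rightarrow\infty]{\textup{p}}Y$ and $g$ is continuous, then $g\left(Y_{n}\right)\xrightarrow[n\rightarrow\infty]{\textup{p}}g\left(Y\right)$. What you have written instead is a proof sketch of the final theorem of Section \ref{sec:Optimal-Weights}, concerning consistency, asymptotic normality, and a large deviations principle for the weight estimator $\hat{w}_{\lambda}$. Nothing in your argument establishes the continuous mapping theorem; on the contrary, you \emph{invoke} it (``yields $\hat{w}_{\lambda}\xrightarrow[n\rightarrow\infty]{\textup{p}}w_{\lambda}$ immediately by Theorem \ref{thm:cont_mapping}''), so taken as a proof of that theorem your argument is circular as well as off-target. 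For the record, the paper does not prove Theorem \ref{thm:cont_mapping} either; it cites Theorem 2.3 of \cite{VanderVaart1998}.

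If you did intend to prove the continuous mapping theorem, the standard route is as follows: fix $\varepsilon>0$ and $\eta>0$; by tightness of the single random variable $Y$ choose a compact set $C\subset A$ with $\IP\left\{ Y\notin C\right\} <\eta$; on a suitable compact neighbourhood of $C$ the function $g$ is uniformly continuous, so there is $\delta>0$ such that $\left|y-y'\right|<\delta$ and $y\in C$ imply $\left|g(y)-g(y')\right|<\varepsilon$; then $\left\{ \left|g\left(Y_{n}\right)-g\left(Y\right)\right|>\varepsilon\right\} \subset\left\{ \left|Y_{n}-Y\right|\geq\delta\right\} \cup\left\{ Y\notin C\right\} $, and the first event has probability tending to $0$ by hypothesis while the second has probability at most $\eta$. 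None of this appears in your submission, so as it stands there is a complete gap between the claim and the argument offered.
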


\begin{proof}
See Theorem 2.3 in \cite{VanderVaart1998}.
\end{proof}
\begin{thm}[Delta Method]
\label{thm:delta_method}Let $\left(Y_{n}\right)_{n\in\IN}$ be a
sequence of random variables such that $\IE\,Y_{n}=\mu\in\IR$ for
all $n\in\IN$ and $\sqrt{n}\left(Y_{n}-\mu\right)\xrightarrow[n\rightarrow\infty]{\textup{d}}\mathcal{N}\left(0,\sigma^{2}\right)$
for a constant $\sigma>0$. Let $f:D\rightarrow\IR$ be a continuously
differentiable function with domain $D\subset\IR$ such that $Y_{n}\in D$
for all $n\in\IN$. Assume $f'\left(\mu\right)\neq0$. Then
\[
\sqrt{n}\left(f\left(Y_{n}\right)-f\left(\mu\right)\right)\xrightarrow[n\rightarrow\infty]{\textup{d}}\mathcal{N}\left(0,\left(f'\left(\mu\right)\right)^{2}\sigma^{2}\right)
\]
is satisfied.
\end{thm}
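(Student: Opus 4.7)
The plan is to prove the Delta Method via a first-order Taylor expansion of $f$ around $\mu$, combining it with Slutsky's theorem (Theorem \ref{thm:slutsky}) and the continuous mapping theorem (Theorem \ref{thm:cont_mapping}). Since $f$ is continuously differentiable, we have for every $y\in D$ the integral form of the remainder
\[
f(y) - f(\mu) = (y - \mu)\int_{0}^{1} f'\bigl(\mu + s(y - \mu)\bigr)\,\textup{d}s \eqcolon (y - \mu)\,g(y),
\]
with $g:D\rightarrow\IR$ continuous and $g(\mu) = f'(\mu)$. Evaluating at $y = Y_n$ and multiplying by $\sqrt{n}$ yields the master identity
\[
\sqrt{n}\bigl(f(Y_n) - f(\mu)\bigr) = g(Y_n)\cdot\sqrt{n}(Y_n - \mu),
\]
which drives the rest of the proof.

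First I would show that $Y_n \xrightarrow[n\rightarrow\infty]{\textup{p}} \mu$; this is needed to apply the continuous mapping theorem to $g$. Writing $Y_n - \mu = \tfrac{1}{\sqrt{n}}\cdot\sqrt{n}(Y_n - \mu)$ and invoking Slutsky's theorem with the deterministic factor $\tfrac{1}{\sqrt{n}} \xrightarrow[n\rightarrow\infty]{\textup{p}} 0$ gives $Y_n - \mu \xrightarrow[n\rightarrow\infty]{\textup{d}} 0$. Since convergence in distribution to a constant is equivalent to convergence in probability to that constant, $Y_n \xrightarrow[n\rightarrow\infty]{\textup{p}} \mu$ follows. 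The continuity of $g$ together with Theorem \ref{thm:cont_mapping} then yields $g(Y_n) \xrightarrow[n\rightarrow\infty]{\textup{p}} g(\mu) = f'(\mu)$.

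The final step is a second application of Slutsky's theorem to the product $g(Y_n)\cdot\sqrt{n}(Y_n - \mu)$: with $Z \sim \mathcal{N}(0, \sigma^{2})$,
\[
\sqrt{n}\bigl(f(Y_n) - f(\mu)\bigr) \xrightarrow[n\rightarrow\infty]{\textup{d}} f'(\mu)\cdot Z \sim \mathcal{N}\bigl(0, (f'(\mu))^{2}\sigma^{2}\bigr),
\]
where the final distributional identity uses the scaling property of the normal law and the non-degeneracy guaranteed by $f'(\mu)\neq 0$. The main obstacle to be treated carefully is the handling of the Taylor remainder: a naive mean value theorem argument would produce a random intermediate point $\xi_n$ whose measurability requires a selection argument, but the integral form above sidesteps this entirely by exhibiting a continuous, deterministic function $g$ of $y$. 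The rest of the argument is a routine assembly of the two auxiliary theorems from the Appendix.
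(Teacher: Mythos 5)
Your proof is correct and follows essentially the same route as the paper's: a first-order Taylor expansion of $f$ around $\mu$, convergence in probability of the derivative factor to $f'\left(\mu\right)$ via the continuous mapping theorem, and Slutsky's theorem to assemble the product. The only substantive difference is the form of the remainder. The paper uses the Lagrange form, $f\left(Y_{n}\right)=f\left(\mu\right)+\left(Y_{n}-\mu\right)f'\left(\xi_{n}\right)$ with a random intermediate point $\xi_{n}$, and then argues $\left|\xi_{n}-\mu\right|\leq\left|Y_{n}-\mu\right|\xrightarrow[n\rightarrow\infty]{\textup{p}}0$ so that $f'\left(\xi_{n}\right)\xrightarrow[n\rightarrow\infty]{\textup{p}}f'\left(\mu\right)$; as you note, this glosses over the measurability of $\xi_{n}$. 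Your integral form $g\left(y\right)=\int_{0}^{1}f'\left(\mu+s\left(y-\mu\right)\right)\,\textup{d}s$ replaces the random intermediate point by a deterministic continuous function of $y$, so the continuous mapping theorem applies directly to $g\left(Y_{n}\right)$ with no selection argument; this is a genuine, if minor, tightening. Your explicit derivation of $Y_{n}\xrightarrow[n\rightarrow\infty]{\textup{p}}\mu$ from the hypothesis via Slutsky is also spelled out more carefully than in the paper, which uses the same fact implicitly. One caveat shared by both arguments: the expansion (in either form) requires the segment joining $\mu$ and $Y_{n}$ to lie in $D$, which the hypotheses do not literally guarantee for an arbitrary $D\subset\IR$; this is harmless in the paper's application, where $D$ is a compact interval containing $\mu$, but would be worth flagging if you wanted the statement in full generality.
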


\begin{proof}
Since this result is not found as frequently in textbooks, we present
a proof here for the convenience of the readers.

We Taylor expand the function $f$ around the point $\mu$:
\[
f\left(Y_{n}\right)=f\left(\mu\right)+\left(Y_{n}-\mu\right)f'\left(\xi_{n}\right)
\]
for some $\xi_{n}$ which lies between $\mu$ and $Y_{n}$. We can
rewrite the above as
\begin{equation}
\sqrt{n}\left(f\left(Y_{n}\right)-f\left(\mu\right)\right)=\sqrt{n}\left(Y_{n}-\mu\right)f'\left(\xi_{n}\right).\label{eq:conv_f}
\end{equation}
Due to the assumption $\sqrt{n}\left(Y_{n}-\mu\right)\xrightarrow[n\rightarrow\infty]{\textup{d}}\mathcal{N}\left(0,\sigma^{2}\right)$,
\[
\left|\xi_{n}-Y_{n}\right|\leq\left|\mu-Y_{n}\right|\xrightarrow[n\rightarrow\infty]{\textup{p}}0,
\]
and as $f'$ is continuous, Theorem \ref{thm:cont_mapping} implies
\[
f'\left(\xi_{n}\right)\xrightarrow[n\rightarrow\infty]{\textup{p}}f'\left(\mu\right).
\]
The last display, $\sqrt{n}\left(Y_{n}-\mu\right)\xrightarrow[n\rightarrow\infty]{\textup{d}}\mathcal{N}\left(0,\sigma^{2}\right)$,
(\ref{eq:conv_f}), and Theorem \ref{thm:slutsky} together yield
\[
\sqrt{n}\left(f\left(Y_{n}\right)-f\left(\mu\right)\right)\xrightarrow[n\rightarrow\infty]{\textup{d}}\mathcal{N}\left(0,\left(f'\left(\mu\right)\right)^{2}\sigma^{2}\right).
\]
\end{proof}
Recall Notation \ref{notation:infty} for the expressions $\left[-\infty,\infty\right]$
and $\left[0,\infty\right]$.
\begin{defn}
\label{def:LDP}Let $\left(P_{n}\right)_{n\in\IN}$ be a sequence
of probability measures on a metric space $\mathcal{X}$, let $\left(a_{n}\right)_{n\in\IN}$
be a sequence of positive numbers with $a_{n}\xrightarrow[n\rightarrow\infty]{}\infty$,
and let $I:\mathcal{X}\rightarrow\left[0,\infty\right]$ be a function.
If $I$ is lower semi-continuous, i.e.\! its level sets $\left\{ x\in\mathcal{X}\,|\,I\left(x\right)\leq\alpha\right\} $
are closed for each $\alpha\in\left[0,\infty\right)$, we call $I$
a rate function. If the level sets are compact in $\mathcal{X}$ for
each $\alpha\in\left[0,\infty\right)$, we call $I$ a good rate function.
If $I$ is a good rate function, and the two conditions
\begin{enumerate}
\item $\limsup_{n\rightarrow\infty}\frac{1}{a_{n}}\ln P_{n}K\le-\inf_{x\in K}I\left(x\right)$
for each closed set $K\subset\mathcal{X}$,
\item $\liminf_{n\rightarrow\infty}\frac{1}{a_{n}}\ln P_{n}G\geq-\inf_{x\in G}I\left(x\right)$
for each open set $G\subset\mathcal{X}$
\end{enumerate}
hold, then we say that the sequence $\left(P_{n}\right)_{n\in\IN}$
satisfies a large deviations principle with rate $a_{n}$ and rate
function $I$. If $\left(Y_{n}\right)_{n\in\IN}$ is a sequence of
random variables taking values in $\mathcal{X}$ such that, for each
$n\in\IN$, $Y_{n}$ follows the distribution $P_{n}$, we will also
say that $\left(Y_{n}\right)_{n\in\IN}$ satisfies a large deviations
principle with rate $a_{n}$ and rate function $I$.

In our applications of large deviations principles, the metric space
$\mathcal{X}$ will be $\IR$ or $\left[-\infty,\infty\right]$.
\end{defn}

We present a standard convergence result concerning a statistic employed
in the estimation of the parameter $\beta$. This can be used to then
demonstrate convergence to $\beta$ for the estimators presented in
this article. Recall Definition \ref{def:cumul_entropy} of the entropy
function of a distribution.
\begin{prop}
\label{prop:conv_stat}Let $n,N\in\IN$ and let $R:\Omega_{N}^{n}\rightarrow\IR$
be a statistic of the form
\[
R\left(x^{(1)},\ldots,x^{(n)}\right)\coloneq\frac{1}{n}\sum_{t=1}^{n}f\left(x^{(t)}\right),\quad\left(x^{(1)},\ldots,x^{(n)}\right)\in\Omega_{N}^{n},
\]
for some function $f:\Omega_{N}\rightarrow\IR$. Let $X$ be a random
vector on $\Omega_{N}$ with Curie-Weiss distribution according to
Definition \ref{def:CWM}. Let $\mu\coloneq\IE_{\beta,N}f\left(X\right)$,
$\sigma^{2}\coloneq\IV_{\beta,N}f\left(X\right)$, and $\Lambda_{f\left(X\right)}^{*}$
the entropy function of $f\left(X\right)$.

Then the following three statements hold:
\begin{enumerate}
\item A law of large numbers holds for the sequence $R\left(x^{(1)},\ldots,x^{(n)}\right)$:
\[
R\left(x^{(1)},\ldots,x^{(n)}\right)\xrightarrow[n\rightarrow\infty]{\textup{p}}\mu.
\]
\item A central limit theorem holds for the sequence $\sqrt{n}\left(R\left(x^{(1)},\ldots,x^{(n)}\right)-\mu\right)$:
\[
\sqrt{n}\left(R\left(x^{(1)},\ldots,x^{(n)}\right)-\mu\right)\xrightarrow[n\rightarrow\infty]{\textup{d}}\mathcal{N}\left(0,\sigma^{2}\right).
\]
\item A large deviations principle holds for the sequence $R\left(x^{(1)},\ldots,x^{(n)}\right)$
with rate $n$ and rate function $I:\IR\rightarrow\left[0,\infty\right)\cup\left\{ \infty\right\} $,
\[
I\left(x\right)\coloneq\Lambda_{f\left(X\right)}^{*}\left(x\right),\quad x\in\IR.
\]
\item If $I$ has a unique global minimum at $x_{0}\in\IR$, then for any
closed set $K\subset\IR$ that does not contain $x_{0}$ we have $\delta\coloneq\inf_{x\in K}I\left(x\right)>0$,
and
\[
\IP\left\{ R\in K\right\} \leq2\exp\left(-\delta n\right)
\]
holds for all $n\in\IN$.
\end{enumerate}
\end{prop}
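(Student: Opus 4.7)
The plan is to exploit the finiteness of $\Omega_{N}$: since the sample $\left(x^{(1)},\ldots,x^{(n)}\right)$ consists of i.i.d.\! realisations from $\IP_{\beta,N}$, the random variables $Y_{t}\coloneq f\left(x^{(t)}\right)$, $t\in\IN_{n}$, are i.i.d.\ and bounded (they take values in the finite set $f\left(\Omega_{N}\right)$), with common mean $\mu$, variance $\sigma^{2}$, and entropy function $\Lambda_{f\left(X\right)}^{*}$. In particular all moments exist, Lemma \ref{lem:cumulant_entropy} applies to $\Lambda_{f\left(X\right)}^{*}$ whenever $f\left(X\right)$ is not a.s.\! constant (which is implicit in statement 4, where a unique minimiser is assumed), and $R=\frac{1}{n}\sum_{t=1}^{n}Y_{t}$.

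Statement 1 would then follow from Chebyshev's inequality applied to $R$, giving $\IP\left\{ \left|R-\mu\right|>\varepsilon\right\} \leq\sigma^{2}/\left(n\varepsilon^{2}\right)\xrightarrow[n\rightarrow\infty]{}0$. Statement 2 is the classical Lindeberg-L\'evy central limit theorem for i.i.d.\! $L^{2}$ random variables. Statement 3 is Cram\'er's theorem for i.i.d.\! bounded real random variables: since $Y_{1}$ is bounded, $\Lambda_{f\left(X\right)}$ is finite and smooth on all of $\IR$ and the standard tilting proof yields the large deviations principle with rate $n$ and good rate function $\Lambda_{f\left(X\right)}^{*}$; the boundedness avoids the usual subtleties regarding the effective domain of $\Lambda_{f\left(X\right)}$.

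For statement 4, I would mirror the proof of Proposition \ref{prop:atyp_T}. To establish $\delta>0$, note that $\Lambda_{f\left(X\right)}^{*}=\infty$ outside the compact interval $\left[\textup{ess inf}\,Y_{1},\textup{ess sup}\,Y_{1}\right]$ by Lemma \ref{lem:cumulant_entropy}; hence only $K$ intersected with this interval is relevant, and this intersection is compact. Lower semi-continuity of the good rate function $\Lambda_{f\left(X\right)}^{*}$ ensures the infimum is attained at some $x_{K}\in K$ with $x_{K}\neq x_{0}$, and since $x_{0}$ is the unique zero of $\Lambda_{f\left(X\right)}^{*}$ (Lemma \ref{lem:cumulant_entropy}, point 5), $\delta=\Lambda_{f\left(X\right)}^{*}\left(x_{K}\right)>0$ follows. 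For the exponential upper bound I would apply a two-sided Chernoff split: write $K_{-}\coloneq K\cap\left(-\infty,x_{0}\right)$ and $K_{+}\coloneq K\cap\left(x_{0},\infty\right)$, and set $a_{-}\coloneq\sup K_{-}<x_{0}$ and $a_{+}\coloneq\inf K_{+}>x_{0}$, the strict inequalities following from the closedness of $K$ and $x_{0}\notin K$. Markov's inequality applied to $\exp\left(ntR\right)$ with $t\geq0$ gives, after optimising in $t$, $\IP\left\{ R\geq a_{+}\right\} \leq\exp\left(-n\Lambda_{f\left(X\right)}^{*}\left(a_{+}\right)\right)$, and the symmetric argument for $t\leq0$ gives $\IP\left\{ R\leq a_{-}\right\} \leq\exp\left(-n\Lambda_{f\left(X\right)}^{*}\left(a_{-}\right)\right)$. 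The strict monotonicity of $\Lambda_{f\left(X\right)}^{*}$ on each side of $x_{0}$ (Lemma \ref{lem:cumulant_entropy}, point 4) yields $\Lambda_{f\left(X\right)}^{*}\left(a_{\pm}\right)=\inf_{K_{\pm}}\Lambda_{f\left(X\right)}^{*}\geq\delta$, and summing the two tail bounds produces $\IP\left\{ R\in K\right\} \leq2\exp\left(-\delta n\right)$.

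The main obstacle I anticipate is the bookkeeping for statement 4: the constant $2$ arises precisely from the two-sided split, and one has to justify the strict inequalities $a_{-}<x_{0}<a_{+}$ (from closedness of $K$) and the reduction $\Lambda_{f\left(X\right)}^{*}\left(a_{\pm}\right)=\inf_{K_{\pm}}\Lambda_{f\left(X\right)}^{*}$ (from strict monotonicity of $\Lambda_{f\left(X\right)}^{*}$ on either side of $x_{0}$). One also has to handle the degenerate cases where $K_{-}$ or $K_{+}$ is empty, in which case only one of the two tail estimates is needed and the bound holds a fortiori. Statements 1--3 are routine consequences of the i.i.d.\! boundedness of $\left(Y_{t}\right)_{t\in\IN_{n}}$ and the properties of $\Lambda_{f\left(X\right)}^{*}$ already established in Lemma \ref{lem:cumulant_entropy}.
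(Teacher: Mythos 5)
Your proposal is correct and follows essentially the same route as the paper: statements 1--3 are dispatched by citing the standard i.i.d.\! results, and statement 4 is proved by the same two-sided Chernoff/Markov tilting argument, splitting $K$ at $x_{0}=\mu$ and bounding each tail at the nearest point of $K$ on either side (your $a_{-},a_{+}$ are the paper's $\mu-\varepsilon_{l},\mu+\varepsilon_{r}$), with the positivity of $\delta$ coming from the monotonicity and unique-zero properties of $\Lambda_{f\left(X\right)}^{*}$ in Lemma \ref{lem:cumulant_entropy}. Your explicit treatment of the degenerate cases where $K_{-}$ or $K_{+}$ is empty is in fact slightly more careful than the paper's.
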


\begin{proof}
Since $R$ is defined as a sum of i.i.d. random variables with existing
variance, the first three results can be found in many books about
probability theory, and we therefore omit their proof. The last statement
is somewhat less commonly found, hence we prove it here.

The random variable $f\left(X\right)$ is bounded and not almost surely
constant, so Lemma \ref{lem:cumulant_entropy} applies to $\Lambda_{f\left(X\right)}^{*}$.

Set
\begin{equation}
\delta\coloneq\inf\left\{ \Lambda_{f\left(X\right)}^{*}\left(x\right)\,|\,x\in K\right\} .\label{eq:delta-1}
\end{equation}
As $\IE_{\beta,N}f\left(X\right)=\mu\in K^{c}$ and $K^{c}$ is open,
there is some $\eta>0$ such that the open ball $B_{\eta}\left(\mu\right)$
with radius $\eta$ and centre $\mu$ is a subset of $K^{c}$. We
choose $\varepsilon_{r}\coloneq\sup\left\{ \eta>0\,|\,\left(\mu,\mu+\eta\right)\subset K^{c}\right\} $
and $\varepsilon_{l}\coloneq\sup\left\{ \eta>0\,|\,\left(\mu-\eta,\mu\right)\subset K^{c}\right\} $.
Then $G\coloneq\left(\mu-\varepsilon_{l},\mu+\varepsilon_{r}\right)\subset K^{c}$.
By statements 4 and 5 of Lemma \ref{lem:cumulant_entropy},
\[
\delta=\inf\left\{ \Lambda_{f\left(X\right)}^{*}\left(x\right)\,|\,x\in G^{c}\right\} =\min\left\{ \Lambda_{f\left(X\right)}^{*}\left(\mu-\varepsilon_{l}\right),\Lambda_{f\left(X\right)}^{*}\left(\mu+\varepsilon_{r}\right)\right\} >0.
\]
We write
\[
\IP\left\{ R\in K\right\} \leq\IP\left\{ R\in G^{c}\right\} =\IP\left\{ R\leq\mu-\varepsilon_{l}\right\} +\IP\left\{ R\geq\mu+\varepsilon_{r}\right\} .
\]

We use Markov's inequality to obtain for all $t\leq0$
\begin{align*}
\IP\left\{ R\leq\mu-\varepsilon_{l}\right\}  & \leq\IP\left\{ \exp\left(nt\left(R-\left(\mu-\varepsilon_{l}\right)\right)\right)\geq1\right\} \leq\IE\exp\left(nt\left(R-\left(\mu-\varepsilon_{l}\right)\right)\right)\\
 & =\exp\left(-nt\left(\mu-\varepsilon_{l}\right)\right)\prod_{r=1}^{n}\IE\exp\left(tf\left(X^{(r)}\right)\right)=\exp\left(-nt\left(\mu-\varepsilon_{l}\right)\right)\left[\IE\exp\left(tf\left(X\right)\right)\right]^{n}\\
 & =\exp\left(-nt\left(\mu-\varepsilon_{l}\right)\right)\exp\left(n\Lambda_{f\left(X\right)}\left(t\right)\right)=\exp\left(-n\left(\left(\mu-\varepsilon_{l}\right)t-\Lambda_{f\left(X\right)}\left(t\right)\right)\right).
\end{align*}
As this holds for all $t\leq0$ and we have $\mu-\varepsilon_{l}<\mu=\IE_{\beta,N}f\left(X\right)$,
we arrive at
\begin{equation}
\IP\left\{ R\leq\mu-\varepsilon_{l}\right\} \leq\exp\left(-n\Lambda_{f\left(X\right)}^{*}\left(\mu-\varepsilon_{l}\right)\right).\label{eq:left_UB-1}
\end{equation}
Similarly, we calculate the upper bound
\begin{equation}
\IP\left\{ R\geq\mu+\varepsilon_{r}\right\} \leq\exp\left(-n\Lambda_{f\left(X\right)}^{*}\left(\mu+\varepsilon_{r}\right)\right).\label{eq:right_UB-1}
\end{equation}
Combining (\ref{eq:left_UB-1}) and (\ref{eq:right_UB-1}) yields
\begin{align*}
\IP\left\{ R\in K\right\}  & \leq\exp\left(-n\Lambda_{f\left(X\right)}^{*}\left(\mu-\varepsilon_{l}\right)\right)+\exp\left(-n\Lambda_{f\left(X\right)}^{*}\left(\mu+\varepsilon_{r}\right)\right)\\
 & \leq2\exp\left(-\delta n\right).
\end{align*}
\end{proof}
\begin{thm}[Contraction Principle]
\label{thm:contr_princ}Let $\mathcal{X}$ and $\mathcal{Y}$ be
metric spaces. Let $\left(P_{n}\right)_{n\in\IN}$ be a sequence of
probability measures on $\mathcal{X}$, and let $f:D\rightarrow\mathcal{Y}$
be a continuous function with its domain $D\subset\mathcal{X}$ containing
the support of $P_{n}$ for each $n\in\IN$. Let $\left(a_{n}\right)_{n\in\IN}$
a sequence of positive numbers with $a_{n}\xrightarrow[n\rightarrow\infty]{}\infty$,
and $I:\mathcal{X}\rightarrow\left[0,\infty\right]$ a good rate function.
We define $J:\mathcal{Y}\rightarrow\left[0,\infty\right]$ by
\[
J\left(y\right)\coloneq\inf\left\{ I\left(x\right)\,|\,x\in D,f\left(x\right)=y\right\} ,\quad y\in\mathcal{Y}.
\]
Then $J$ is a good rate function. If $\left(P_{n}\right)_{n\in\IN}$
satisfies a large deviations principle with rate $a_{n}$ and rate
function $I$, then the sequence of push forward measures $\left(P_{n}\circ f^{-1}\right)_{n\in\IN}$
on $\mathcal{Y}$ satisfies a large deviations principle with rate
$a_{n}$ and rate function $J$.
\end{thm}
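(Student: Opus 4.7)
The plan is to verify the three ingredients of a large deviations principle separately for the pushforward sequence $\left(P_n \circ f^{-1}\right)_{n \in \IN}$: that $J$ is a good rate function, the upper bound for closed sets, and the lower bound for open sets. The common technical device for the two bounds is the identity
\[
\inf_{x \in f^{-1}(A) \cap D} I(x) \;=\; \inf_{y \in A} J(y), \qquad A \subset \mathcal{Y},
\]
which follows directly from the definition of $J$ together with the decomposition $f^{-1}(A) \cap D = \bigcup_{y \in A} f^{-1}(\{y\})$ by exchanging the order of the two infima.

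For the good rate function property, I would establish the identity $\{y \in \mathcal{Y} \mid J(y) \leq \alpha\} = f\bigl(\{x \in D \mid I(x) \leq \alpha\}\bigr)$ for every $\alpha \in [0,\infty)$. The inclusion $\supset$ is immediate from the definition. For the reverse inclusion, given $y$ with $J(y) \leq \alpha$, pick a sequence $(x_n)_{n \in \IN} \subset f^{-1}(\{y\})$ with $I(x_n) \to J(y)$; since $I(x_n) \leq \alpha + 1$ eventually, the goodness of $I$ yields a subsequence $x_{n_k} \to x_*$ inside the compact level set $\{I \leq \alpha+1\}$, and the continuity of $f$ together with the lower semi-continuity of $I$ give $f(x_*) = y$ and $I(x_*) \leq J(y) \leq \alpha$. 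Thus $\{J \leq \alpha\}$ is the continuous image of a compact set, hence compact, which in particular makes $J$ lower semi-continuous.

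For the upper bound on a closed set $K \subset \mathcal{Y}$, note that $f^{-1}(K) \subset D$ is closed in $D$, and because each $P_n$ is supported in $D$, one has $(P_n \circ f^{-1})(K) = P_n(f^{-1}(K))$. Applying the upper bound of the LDP for $(P_n)$ to a closed subset of $\mathcal{X}$ whose intersection with $D$ equals $f^{-1}(K)$, and invoking the infimum identity, one obtains $\limsup_{n\to\infty} a_n^{-1} \ln (P_n \circ f^{-1})(K) \leq -\inf_{y \in K} J(y)$. The lower bound for an open set $G \subset \mathcal{Y}$ is treated in parallel: $f^{-1}(G)$ is open in $D$ and may be written as $U \cap D$ for some open $U \subset \mathcal{X}$, so $(P_n \circ f^{-1})(G) = P_n(U)$, and the LDP lower bound for $(P_n)$ on $U$ combined with the same infimum identity transfers the estimate to $\mathcal{Y}$.

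The main obstacle is the slight mismatch between the domain of $f$ and the ambient space: continuity is only available on $D$, whereas the LDP for $(P_n)$ is formulated for subsets of $\mathcal{X}$. The translation of closed sets from $\mathcal{Y}$ to closed sets of $\mathcal{X}$ (rather than merely of $D$) is handled cleanly by the hypothesis that each $P_n$ is supported inside $D$, so events in $\mathcal{X}$ and their intersections with $D$ carry identical probability and, upon extending $I$ by $+\infty$ outside $D$, identical infima of $I$. Once this bookkeeping is in place, the argument is standard.
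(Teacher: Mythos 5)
The paper offers no proof of this theorem at all: it is stated as a quoted result and justified by the citation \cite[Theorem 4.2.1]{DembZeit1998}. Your argument is the standard textbook proof of the contraction principle, and in the classical setting $D=\mathcal{X}$ every step you describe is correct: the infimum-exchange identity, the level-set identity $\left\{ J\leq\alpha\right\} =f\left(\left\{ I\leq\alpha\right\} \right)$, and the transfer of the two bounds through preimages of closed and open sets. Your treatment of the lower bound is also fine even for $D\subsetneq\mathcal{X}$, since there the inequality $\inf_{x\in U}I\left(x\right)\leq\inf_{x\in U\cap D}I\left(x\right)=\inf_{y\in G}J\left(y\right)$ points in the helpful direction.

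The gap is the device you invoke to handle $D\subsetneq\mathcal{X}$, namely ``extending $I$ by $+\infty$ outside $D$.'' The large deviations hypothesis holds for the given rate function $I$, which may be finite at points of $\mathcal{X}\setminus D$; you are not free to replace it. This breaks the argument in two places. First, in the goodness proof the subsequential limit $x_{*}$ lies in the compact level set $\left\{ x\in\mathcal{X}\,|\,I\left(x\right)\leq\alpha+1\right\} $, which need not be contained in $D$, so $f\left(x_{*}\right)$ may be undefined and the inclusion $\left\{ J\leq\alpha\right\} \subset f\left(\left\{ x\in D\,|\,I\left(x\right)\leq\alpha\right\} \right)$ is unproved. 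Second, in the upper bound you must replace $f^{-1}\left(K\right)=C\cap D$ by a closed set $C\subset\mathcal{X}$ before applying the LDP, which yields $-\inf_{x\in C}I\left(x\right)$; but $\inf_{x\in C}I\left(x\right)\leq\inf_{x\in C\cap D}I\left(x\right)=\inf_{y\in K}J\left(y\right)$, so the resulting estimate is weaker than the one required. The failure is not merely technical: take $\mathcal{X}=\mathcal{Y}=\IR$, $P_{n}=\delta_{1/n}$, $a_{n}=n$, $I\left(0\right)=0$ and $I\equiv\infty$ elsewhere, $D=\left(0,1\right]$, and $f$ the inclusion of $D$ into $\IR$. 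The LDP holds for $\left(P_{n}\right)_{n\in\IN}$ with good rate function $I$, each $P_{n}$ is supported in $D$, and $f$ is continuous on $D$, yet $J\equiv\infty$ while $P_{n}\circ f^{-1}$ are probability measures, so the upper bound fails for $K=\IR$ (and with $I\left(x\right)=\left|x\right|$ instead, $J$ fails to be lower semi-continuous). So no proof can close this gap without an additional hypothesis such as $\left\{ x\in\mathcal{X}\,|\,I\left(x\right)<\infty\right\} \subset D$ or $D$ closed; this is really a defect of the statement as given rather than of your strategy. In the paper's only application $D=\left[\kappa,N^{2}\right]$ is compact and carries all the mass and all the finite values of the rate function, so your argument (and the theorem) is sound there.
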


\begin{proof}
This result can be found, e.g., in \cite[Theorem 4.2.1]{DembZeit1998}.
\end{proof}

\section*{Acknowledgements}

M.\! B.\! is a Fellow and G.\! T.\! is a Candidate of the Sistema
Nacional de Investigadoras e Investigadores. G.\! T.\! was supported
by a Secihti (formerly Conahcyt) postdoctoral fellowship.

\bibliographystyle{plain}

\end{document}